\documentclass[10pt,a4paper,reqno,twoside]{amsart}

%
\setlength{\textwidth}{6.39in}
\setlength{\textheight}{9.0in}
\vfuzz=1.25pt \marginparwidth=20pt \calclayout

\usepackage{enumerate, mathtools}
\usepackage{amsmath,amssymb,amscd,bezier}
\usepackage{amsfonts}
\usepackage[english]{babel}
\usepackage[latin1]{inputenc}
\usepackage[T1]{fontenc}
\usepackage{mathrsfs}
\usepackage{color}

\theoremstyle{plain}
\newtheorem{theorem}{Theorem}
\newtheorem{proposition}{Proposition}
\newtheorem{lemma}{Lemma}
\newtheorem{corollary}{Corollary}

\theoremstyle{definition}

\newtheorem{notation}{Notation}

\theoremstyle{remark}
\newtheorem{remark}{Remark}
\newtheorem{example}{Example}

\allowdisplaybreaks


\newcommand{\R}{\mathbb R}
\newcommand{\Z}{\mathbb Z}

\renewcommand{\phi}{\varphi}

\newcommand{\N}{\mathbb N}
\newcommand{\supp}{\mbox{supp\,}}
\newcommand{\lin}{{\mathrm{\,lin\,}}}

\newcommand{\xii}{{\abs{\xi}}}

\DeclarePairedDelimiter{\abs}{\lvert}{\rvert}
\DeclarePairedDelimiter{\norma}{\lVert}{\rVert}

\newcommand{\norm}[1]{\lVert#1\rVert}
\DeclarePairedDelimiter{\ceil}{\lceil}{\rceil}

\newcommand{\sinc}{{\mathrm{\,sinc\,}}}

\newcommand{\defeq}{{\mathrm{\,:=\,}}}
\renewcommand{\doteq}{\defeq}

\renewcommand{\mathfrak}{\mathscr}

\newcommand{\eps}{\varepsilon}

\def\<#1\>{\left\langle#1\right\rangle }

\title[The critical exponent for nonlinear damped $\sigma$-evolution equations]{The critical exponent for nonlinear \\
damped $\sigma$-evolution equations}

\author{Marcello D'Abbicco, Marcelo Rempel Ebert}

\address{Marcello D'Abbicco, Department of Mathematics, University of Bari, Via E. Orabona 4 - 70125 BARI - ITALY; marcello.dabbicco@uniba.it}

\address{Marcelo Rempel Ebert, Departamento de Computa\c{c}\~ao e Matem\'atica, Universidade de S\~ao Paulo, Ribeir\~ao Preto, SP, 14040-901, Brasil}

\begin{document}

\begin{abstract}
In this paper, we derive suitable optimal $L^p-L^q$ decay estimates, $1\leq p\leq q\leq \infty$, for the solutions to the $\sigma$-evolution equation, $\sigma>1$, with structural damping and power nonlinearity~$|u|^{1+\alpha}$ or~$|u_t|^{1+\alpha}$,
\[ u_{tt}+(-\Delta)^\sigma u +(-\Delta)^\theta u_t=\begin{cases}
|u|^{1+\alpha}, \\
|u_t|^{1+\alpha},
\end{cases}\]
where~$t\geq0$ and~$x\in\R^n$. Using these estimates, we can solve the problem of finding the critical exponents for the two nonlinear problems above in the so-called non-effective case, $\theta\in(\sigma/2,\sigma]$. This latter is more difficult than the effective case~$\theta\in[0,\sigma/2)$, since the asymptotic profile of the solution involves a diffusive component and an oscillating one. The novel idea in this paper consists in treating separately the two components to neglect the loss of decay rate created by the interplay of the two components. We deal with the oscillating component, by localizing the low frequencies, where oscillations appear, in the extended phase space. This strategy allows us to recover a quasi-scaling property which replaces the lack of homogeneity of the equation.
\end{abstract}

\keywords{semilinear evolution equations, $L^p-L^q$ estimates, critical exponent, global existence, small data solutions}

\subjclass[2010]{35L15, 35L71, 35A01, 35B33, 35E15, 35G25}

\maketitle

\section{Introduction}

In this paper we study the critical exponent of small data global-in-time solutions for the forward Cauchy problem for a $\sigma$-evolution equation with a so-called structural damping and with a power nonlinearity~$f(u)$, in particular,
\begin{equation}
\label{eq:CPu}
\begin{cases}
u_{tt}+(-\Delta)^\sigma u +(-\Delta)^\theta u_t=|u|^{1+\alpha}, \quad x\in \R^n, \, t\in\R_+,\\
u(0,x)=0, \\
u_t(0,x)=u_1(x),
\end{cases}
\end{equation}
or with a power nonlinearity~$f(u_t)$, in particular,
\begin{equation}
\label{eq:CPut}
\begin{cases}
u_{tt}+(-\Delta)^\sigma u +(-\Delta)^\theta u_t=|u_t|^{1+\alpha}, \quad x\in \R^n, \, t\in\R_+,\\
u(0,x)=0, \\
u_t(0,x)=u_1(x).
\end{cases}
\end{equation}
The term~$(-\Delta)^\sigma$ stands for higher powers of the Laplace operator, which may possibly be non-integer. In general, it is assumed~$\sigma>1$, real. In the non-integer case, $(-\Delta)^\sigma f= \mathfrak{F}^{-1}(\xii^{2\sigma}\hat f)$, for~$f$ in a suitable function space. Equations whose ``principal part'' is $u_{tt}+(-\Delta)^\sigma u=0$, like the plate equation which is attained for~$\sigma=2$, are called $\sigma$-evolution equations in the sense of Petrowsky (see~\cite{ER}), since their symbols $\tau^2 + |\xi|^{2\sigma}$ have only pure imaginary, distinct, roots $\tau=\pm i|\xi|^{\sigma}$ for all $\xi \neq 0$. The set of 1-evolution operators coincides with the set of strictly hyperbolic operators.

The term~$(-\Delta)^\theta u_t$ represents a damping, a term whose action dissipates the energy
\[ E(t)= \frac12\,\|u_t(t,\cdot)\|_{L^2}^2 + \frac12\,\|(-\Delta)^{\frac\sigma2}u(t,\cdot)\|_{L^2}^2 \]
of the corresponding linear equation
\begin{equation}
\label{eq:CPlin}
\begin{cases}
u_{tt}+(-\Delta)^\sigma u +(-\Delta)^\theta u_t=0, \quad x\in \R^n, \, t\in\R_+,\\
u(0,x)=0, \\
u_t(0,x)=u_1(x).
\end{cases}
\end{equation}
Indeed,
\[ E'(t) = - \|(-\Delta)^{\frac\theta2}u_t(t,\cdot)\|_{L^2}^2\leq0. \]
The range~$\theta\in[0,\sigma]$ is of interest, with~$\theta=0$ representing a classical damping, also called exterior or weak damping, and~$\theta\in(0,\sigma]$ represents a structural damping, also called strong damping.

The nonlinearity may have several shapes, but we are mainly interested in the model cases~$f(u)=|u|^{1+\alpha}$ or~$f(u_t)=|u_t|^{1+\alpha}$, for~$\alpha>0$. The important information is that the nonlinearity is without sign, i.e., it is not of type~$\pm u|u|^\alpha$, so it is in general a perturbation which may create blow-up in finite time.

In recent years there has been a growing attention to find the so-called \emph{critical exponent}~$\bar\alpha$ for problems~\eqref{eq:CPu} and~\eqref{eq:CPut}. By critical exponent, we mean that global solutions to~\eqref{eq:CPu} or~\eqref{eq:CPut} exist for sufficiently small data, when~$\alpha>\bar\alpha$, whereas solutions cannot exist globally, in general, when~$\alpha\in(1,\bar\alpha)$. The critical case~$\alpha=\bar\alpha$ sometimes belongs to the nonexistence interval, and sometimes to the existence interval.

As expected in the setting of small data solutions for nonlinear problem, the critical exponent~$\bar\alpha$ is mainly determined by the profile of the decay rate of the solution in suitable norms. In particular, it is in general relevant the vanishing speed as~$t\to\infty$ of the norm~$\|u(t,\cdot)\|_{L^{1+\alpha}}$. However, the action of the damping term~$(-\Delta)^\theta u_t$ deeply modifies the asymptotic profile of the solution as~$t\to\infty$. According to a classification introduced for more general problems in~\cite{DAE16} (see also~\cite{W06,W07} for the original definition with classical damping and time-dependent coefficients), we say that the damping is \emph{effective} when~$\theta\in[0,\sigma/2)$ and \emph{non-effective} when~$\theta\in(\sigma/2,\sigma]$.

\subsection*{The effective case: the easier case, already solved}

In the effective case~$\theta\in[0,\sigma/2)$, a diffusion phenomenon appears which make the asymptotic profile of the solution to~\eqref{eq:CPlin} to be determined by the solution to a diffusive problem. More precisely, $u=u_++u_-$, where
\begin{align*}
u_+(t,\cdot)
    & \sim v_+(t,\cdot) =e^{-t(-\Delta)^{\sigma-\theta}} I_{2\theta} u_1,\\
u_-(t,\cdot)
    & \sim v_-(t,\cdot) = e^{-t(-\Delta)^\theta} I_{2\theta} u_1,
\end{align*}
in the following sense:
\begin{align*}
\|(u_+-v_+)(t,\cdot)\|_{L^p}
    & =\textit{o}\big( t^{-\frac{n}{2(\sigma-\theta)}\left(1-\frac1p\right)+\frac\theta{\sigma-\theta}} \big)\|u_1\|_{L^1}, \\
\|(u_--v_-)(t,\cdot)\|_{L^p}
    & =\textit{o}\big( t^{-\frac{n}{2\theta}\left(1-\frac1p\right)+1} \big)\|u_1\|_{L^1}.
\end{align*}
Here and in the following, $I_af=(-\Delta)^{-\frac{a}2}f=\mathfrak{F}^{-1}(\xii^{-a}\hat f)$ denotes the Riesz potential, and~$v=e^{-t(-\Delta)^b}\phi$ means that~$v$ is the solution to
\[ v_t +(-\Delta)^bv=0, \quad v(0,x)=\phi(x). \]
As a consequence, the optimal decay estimate
\begin{equation}\label{eq:optimaleff}
\|\partial_t^ju(t,\cdot)\|_{L^p}\leq \begin{cases}
C\,t^{-j-\frac{n}{2(\sigma-\theta)}\left(1-\frac1p\right)+\frac\theta{\sigma-\theta}} & \text{if~$n(1-1/p)\geq 2\theta$} \\
C\,t^{-j-\frac{n}{2\theta}\left(1-\frac1p\right)+1}& \text{if~$n(1-1/p)\leq 2\theta$,}
\end{cases}
\end{equation}
follows, for~$j=0,1$ and~$p\in[1,\infty]$ (see~\cite{DAE14JDE, K00}).

This strong analogy between the evolution equation and the simpler diffusive problem, which also reflects for the corresponding nonlinear problems, allowed in recent years to determinate that the critical exponents in the effective case for~\eqref{eq:CPu} and~\eqref{eq:CPut} (see~\cite{DAE14NA, DAE17NA}), respectively, are given by the values
\[ \bar\alpha = \frac{2\sigma}{n-2\theta}, \quad\text{and, respectively,} \qquad \bar\alpha=\frac{2\theta}n. \]
These values correspond to set~$p=1+\bar\alpha$, in such a way that $p$ is the unique value such that $p$ times the decay rate in~\eqref{eq:optimaleff} gives~$1$.

The limit case~$\theta=\sigma/2$ is in general easier, since its asymptotic profile is simpler, and the same results above hold (see~\cite{DA14proc}; see also~\cite{DAEL, DKR}).

The case of wave equation with classical damping, i.e. $\sigma=1$ and $\theta=0$, has been first investigated by A. Matsumura~\cite{M76}, who determined the existence of small data global solutions to~\eqref{eq:CPu} in the supercritical case~$\alpha>2/n$ in space dimension~$n=1,2$. Only later on, the result has been extended to any space dimension~$n\geq3$ by G. Todorova and B. Yordanov~\cite{TY01} (see also~\cite{IT05}), with the nonexistence counterpart proved in~\cite{Z01} and the diffusion phenomenon showed in~\cite{HM, HT, MN03, N03}. The critical case for more general nonlinearities has been recently discussed in~\cite{EGR}.

\subsection*{The non-effective case: the more difficult case}

The situation is completely different in the non-effective case, since oscillations appear in the asymptotic profile of the solution. This case corresponds to the ``damped oscillations'' case for the ordinary differential equation of the harmonic oscillator:
\[ y'' + \omega^2 y + 2by'=0, \]
where~$b,\omega>0$. If~$b\in(0, \omega)$ then the friction action damps the oscillations, namely,
\[ y=e^{-bt}\Big(C_1\sin t\sqrt{\omega^2-b^2} +C_2\cos t\sqrt{\omega^2-b^2} \Big),\]
without destroying them. On the converse, if~$b>\omega$ then
\[ y=C_1e^{(-b+\sqrt{b^2-\omega^2})t}+C_2e^{(-b-\sqrt{b^2-\omega^2})t},\]
and oscillations disappear (which corresponds to the case of effective damping in~\eqref{eq:CPlin}).

Describing the asymptotic profile of the solution to~\eqref{eq:CPlin} is more difficult for $\theta\in(\sigma/2,\sigma]$. Applying Fourier transform to~\eqref{eq:CPlin} with respect to the~$x$ variable and denoting~$\hat u(t,\xi)=\mathfrak{F}(u(t,\cdot))$, we get
\[ \hat u(t,\xi) = t\,e^{-t\xii^{2\theta}/2}\,\sinc (\omega t)\,\hat u_1(\xi),\]
for sufficiently small~$\xii$, where~$\sinc\rho=\rho^{-1}\sin\rho$ is the cardinal sin function, and
\[ \omega = \xii^\sigma\,\sqrt{1-\xii^{4\theta-2\sigma}/4}.  \]
(We mention that the asymptotic profile and the decay rate structure is very different if~$\theta>\sigma$ and new effects appear: we will not investigate this case in this paper, but we address the interested reader to~\cite{GGH}).

Due to~$\omega\sim\xii^\sigma$ as~$\xi\to0$, roughly speaking, we may say that the asymptotic profile for~\eqref{eq:CPlin} is described by
\[ u(t,\cdot) \sim e^{-t(-\Delta)^\theta/2}\,w(t,\cdot), \]
where~$w$ is the solution to the damping-free $\sigma$-evolution equation
\begin{equation}\label{eq:evolution}
w_{tt}+(-\Delta)^\sigma w=0, \qquad w(0,x)=0,\quad w_t(0,x)=u_1(x).
\end{equation}
The interplay between the diffusive part and the oscillating part of the solution now leads to a delicate equilibrium. Until now, it was not clear how to find suitable decay estimates to attack the critical exponent of problems~\eqref{eq:CPu} and~\eqref{eq:CPut}. Partial results for the existence were obtained in~\cite{DAR14}, but until now it was not clear if these results were close or far to optimal.

Indeed, the interplay of the diffusive part of the solution and the presence of oscillations leads to a decay estimate (see later, Remark~\ref{rem:loss}) which gives global existence of small data solutions to~\eqref{eq:CPu} for~$\alpha>(\sigma+2\theta)/(n-\sigma)$ (see, for instance, \cite{DAR14}). On the other hand, nonexistence of global solutions to~\eqref{eq:CPu} and, respectively, \eqref{eq:CPut}, has been proved in~\cite{DAE17NA} in the interval
\[ 0<\alpha\leq \frac{2\sigma}{n-\sigma}\,, \quad \text{and, respectively,} \qquad 0<\alpha\leq \frac{\sigma}n. \]
These nonexistence exponents are obtained by employing the test function method, and so are related to the scaling properties of the equations in~\eqref{eq:CPu} and~\eqref{eq:CPut}. However, in the employment of the scaling argument, the influence of the damping term~$(-\Delta)^\theta u_t$ disappears and, indeed, the parameter~$\theta$ does not appear above.

Until now, several conjectures have been made, in particular looking if the critical exponent~$\bar\alpha$ for~\eqref{eq:CPu} was somewhere between the values already obtained for existence and nonexistence, that is, if it was in the range
\[ \left(\frac{2\sigma}{n-\sigma}, \frac{\sigma+2\theta}{n-\sigma}\right). \]
A similar question arose for problem~\eqref{eq:CPut}.

\subsection*{The result, in brief}

In this paper, we give a final and someway apparently surprising answer to this question. The critical exponents for~\eqref{eq:CPu} and~\eqref{eq:CPut} in the noneffective case~$\theta\in(\sigma/2,\sigma]$ do not depend at all on~$\theta$, at least in low space dimension, and they are the ones obtained by scaling arguments and test function method in~\cite{DAE17NA}. In particular, they are the same obtained in the limit case~$\theta=\sigma/2$ (but in this latter, easier, case, they are valid in any space dimension~$n\geq1$).

We show how to get optimal $L^p-L^q$ decay estimates for the solution to~\eqref{eq:CPlin} taking advantage of both the diffusive and the oscillating part of the solution to~\eqref{eq:CPlin}, and how to properly use these estimates to prove global existence of small data solutions (in low space dimension) in the whole supercritical range suggested by the scaling properties of the equation.

It is important to remark that, even if the noneffective damping does not influence the critical exponent, the damping has a great influence on the regularity of the solution to our problems. It produces a smoothing effect which smooths out oscillations at high frequencies, in particular, allowing us to derive $L^1-L^1$ estimates, for instance, as done in~\cite{DAGL}. Such a property is typical of noneffective damping; in the effective case, the regularity of the solution is not influenced by the damping action.

The novel idea in this paper consists in treating separately the two components of the solution to~\eqref{eq:CPlin}. This strategy allows us to treat an equation which is not scale-invariant by splitting it into two terms with different scaling properties. In this way, we obtain optimal $L^p-L^q$ decay estimates to the nonlinear problems~\eqref{eq:CPu} and~\eqref{eq:CPut}. Our results are valid in low space dimension, leaving open the question if either the result remains valid in high space dimension, using a different proof, or the critical exponent changes in higher space dimension.

\subsection*{Plan of the paper}

The plan of the paper is the following:
\begin{itemize}
\item in Section~\ref{sec:results}, we collect and discuss our main results;
\item in Section~\ref{sec:localization} we localize the solution to~\eqref{eq:CPlin} at low and high frequencies;
\item in Section~\ref{sec:low}, we present our core result, obtaining low-frequencies $L^p-L^q$ estimates for the solution to~\eqref{eq:CPlin};
\item in Section~\ref{sec:lowder}, we show how to extend the results in Section~\ref{sec:low} to derive low-frequencies $L^p-L^q$ estimates for the derivatives of the solution to~\eqref{eq:CPlin};
\item in Section~\ref{sec:whatif}, we briefly discuss which estimates we may obtain if we do not employ the technique presented in our paper of splitting the kernels of the solution to~\eqref{eq:CPlin};
\item in Section~\ref{sec:loss}, we derive low-frequencies $L^p-L^q$ estimates with a loss of decay rate, out of the optimal range in Theorems~\ref{thlinearestimates} and~\ref{thm:lowder};
\item in Section~\ref{sec:high}, we derive high frequencies estimates for the solution to~\eqref{eq:CPlin} and its derivatives;
\item in Section~\ref{nonlinear}, we apply the decay estimates previously derived to prove Theorems~\ref{thnonlinear} and~\ref{thnonlinear2} for the nonlinear problems~\eqref{eq:CPu} and~\eqref{eq:CPut};
\item in~\ref{sec:Appendix}, we collect some multiplier theorems used to prove our estimates through the paper.
\end{itemize}

\subsection*{Notation used trough the paper}

In this paper, we use the following notation.
\begin{notation}
We write $ f\lesssim g$ when there exists a constant $C>0$ such that $f\leq C g$, and $f\approx g$ when $g\lesssim f \lesssim g $.

On the other hand, we write $f\sim g$ when the asymptotic profile of~$f$ is described by~$g$, in an appropriate sense (for instance, a pointwise estimate as~$\xii\to0$ or an estimate in a functional space as~$t\to\infty$).
\end{notation}
\begin{notation}
By~$\mathcal C_c^k=\mathcal C_c^k(\R^n)$, $k\in\N$, we denote the space of compactly supported, $k$-times differentiable functions with continuous derivatives. By~$\mathcal C_0^k=\mathcal C_0^k(\R^n)$, $k\in\N$, we denote the space of $k$-times differentiable functions with continuous derivatives, which vanish as~$|x|\to\infty$. By~$\mathcal S$, we denote the Schwartz space of functions with infinitely many rapidly decreasing derivatives, and by~$\mathcal S'$ we denote the space of tempered distributions, i.e. of the continuous linear functionals mapping~$\mathcal S$, equipped with its standard convergence, into~$\mathbb{C}$.
\end{notation}
\begin{notation}
We denote by~$\hat f=\mathfrak{F} f$ or~$\hat f(t,\cdot)=\mathfrak{F} f(t,\cdot)$ the Fourier transform, with respect to the space variable~$x$, of a tempered distribution or of a function, in the appropriate distributional or functional sense. We denote the inverse Fourier transform by~$\mathfrak{F}^{-1}$, in the appropriate sense.
\end{notation}
\begin{notation}
By~$L^p=L^p(\R^n)$, $p\in[1,\infty]$, we denote the space of measurable functions~$f$ such that~$|f|^p$ has finite integral over~$\R^n$, if~$p\in[1,\infty)$, or has finite essential supremum over~$\R^n$ if~$p=\infty$. We denote by~$W^{m,p}$, $m\in\N$, the space of~$L^p$ functions with weak derivatives up to the~$m$-th order in~$L^p$. We denote by~$H^s$, $s\geq0$, the space of~$L^2$ functions with~$(1+\xii^2)^{\frac{s}2}\,\hat u \in L^2$.
\end{notation}
\begin{notation}\label{DefLpqspaces}
By $L_p^q=L_p^q(\mathbb{R}^n)$ we denote the space of tempered distributions $T\in \mathcal S'$ such that~$T\ast f\in L^q$ for any~$f\in \mathcal S$, and
\[ \| T \ast f\|_{L^q} \leq C \|f\|_{L^p} \]
for all $f\in\mathcal S$ with a constant $C$, which is independent of $f$. In this case, the operator~$T\ast$ is extended by density from~$\mathcal S$ to~$L^p$.

By $M_p^q=M_p^q(\mathbb{R}^n)$, we denote the set of Fourier transforms $\hat{T}$ of distributions $T \in L_p^q$, equipped with the norm 
\[ \|m\|_{M_p^q}:=\sup\big\{\|\mathfrak{F}^{-1}(m\mathfrak{F}(f))\|_{L^q}:f\in \mathcal{S}, \|f\|_{L^p}=1\big\}.\]
and we set~$M_p=M_p^p$. A function~$m$ in $M_p^q$ is called a multiplier of type $(p,q)$. 
\end{notation}
In this paper we will also make us of a dyadic partition of unity and of the related notion of Besov space (see~\cite{Tri83}).
\begin{notation}\label{not:Besov}
We fix a nonnegative function $\psi\in\mathcal C^\infty$, having compact support in $\{ \xi \in \mathbb{R}^n : 2^{-1}\leq \xii\leq 2\}$, such that:
\begin{equation}\label{eq:partition}
\sum_{k=-\infty}^{+\infty} \psi_k(\xi)=1, \qquad \text{where~$\psi_k(\xi):=\psi(2^{-k} \xi)$.}
\end{equation}
(This property is easily obtained if~$\psi(\xi)=\varphi(\xi/2)-\varphi(\xi)$, for some~$\varphi\in\mathcal C^\infty$, with~$\varphi(\xi)=1$ for~$\xii\leq1/2$ and~$\varphi(\xi)=0$ if~$\xii\geq1$). For any~$p\in[1,\infty]$, we define the Besov space
\[ B^0_{p,2} = \{ f\in\mathcal S': \ \forall k\in\Z, \ \mathfrak{F}^{-1}(\psi_k\hat f)\in L^p, \quad \|f\|_{B^0_{p,2}}<\infty \}, \]
where
\[ \|f\|_{B^0_{p,2}} = \|\mathfrak{F}^{-1}(\psi_k\hat f)\|_{\ell^2(L^p)} = \left(\sum_{k=-\infty}^{+\infty} \|\mathfrak{F}^{-1}(\psi_k\hat f)\|_{L^p}^2\right)^{\frac12}. \]
\end{notation}

\section{Results}\label{sec:results}

We are now ready to state our main results.
\begin{theorem}\label{thnonlinear}
Assume that~$\sigma>1$ and that the damping is \emph{noneffective}, i.e., $2\theta\in(\sigma,2\sigma]$. Also assume that the space dimension~$n$ verifies $\sigma<n\leq\bar n(\sigma)$, where
\begin{equation}\label{eq:barn}
\bar n(\sigma) = (3\sigma-2)\left[1 + \frac12\Big(\sqrt{1+8\sigma(3\sigma-2)^{-2}}-1\Big)\right].
\end{equation}
Fix~$\alpha>\alpha_0$, where
\begin{equation}\label{eq:alpha0}
\alpha_0=\frac{2\sigma}{n-\sigma}\,.
\end{equation}
Then there exists a constant $\epsilon>0$ such that for any
\begin{equation}\label{eq:datau}
u_1\in L^{1}\cap L^{\eta} \quad\text{with}\qquad \|u_1\|_{L^1}+\|u_1\|_{L^\eta} < \epsilon,\qquad \text{where~$\eta=\max\{2,n/(2\theta)\}$,}
\end{equation}
there exists a uniquely determined energy solution $u\in \mathcal C([0, \infty),  H^{\sigma}\cap L^\infty) \cap \mathcal C^1([0, \infty), L^{2})$ to~\eqref{eq:CPu}.
Moreover, the solution satisfies the energy estimate
\begin{equation*}
E(t)=\frac12 \|u_t(t,\cdot)\|_{L^2}^2+\frac12\|(-\Delta)^{\frac\sigma2}u(t,\cdot)\|_{L^2}^2\leq C\,(1+t)^{-\frac{n}{2\theta}}\,\big(\|u_1\|_{L^1}^2+\|u_1\|_{L^\eta}^2\big),
\end{equation*}
the decay estimates
\[ \|u(t,\cdot)\|_{L^q}\leq C\,(1+t)^{1-\frac{n}\sigma\left(1-\frac1q\right)}\,\big(\|u_1\|_{L^1}+\|u_1\|_{L^\eta}\big), \qquad \forall q\in[1+\alpha,\infty],\]
and the estimate
\begin{equation}\label{eq:uL2} \|u(t,\cdot)\|_{L^2}\leq \begin{cases}
C\,(1+t)^{1-\frac{n}{2\sigma}}\,\big(\|u_1\|_{L^1}+\|u_1\|_{L^\eta}\big), & \text{if~$n<2\sigma$,}\\
C\,\log(e+t)\,\big(\|u_1\|_{L^1}+\|u_1\|_{L^\eta}\big), & \text{if~$n=2\sigma$,}\\
C\,(1+t)^{-\frac{n-2\sigma}{4\theta}}\,\big(\|u_1\|_{L^1}+\|u_1\|_{L^\eta}\big), & \text{if~$n>2\sigma$.}
\end{cases}
\end{equation}
The constant~$C>0$ does not depend on the initial data.
\end{theorem}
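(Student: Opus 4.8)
The plan is to run a contraction argument in a Banach space whose norm encodes exactly the decay rates in the statement. Since $u(0,\cdot)=0$, Duhamel's principle rewrites~\eqref{eq:CPu} as the integral equation $u=\Phi u$, where
\[ \Phi u(t,\cdot):=E_1(t)\ast u_1+\int_0^t E_1(t-s)\ast\big|u(s,\cdot)\big|^{1+\alpha}\,ds=:u^{\mathrm{lin}}(t,\cdot)+Nu(t,\cdot), \]
and $E_1(t)$ is the propagator of~\eqref{eq:CPlin}, i.e.\ the solution of~\eqref{eq:CPlin} with datum $\delta_0$. As solution space I would take the set $X$ of those $u$ for which
\[ \|u\|_X:=\sup_{t\ge0}\Big[\sum_{q\in\{1+\alpha,\infty\}}(1+t)^{-1+\frac n\sigma\left(1-\frac1q\right)}\|u(t,\cdot)\|_{L^q}+(1+t)^{\frac n{4\theta}}E(t)^{\frac12}+w_2(t)^{-1}\|u(t,\cdot)\|_{L^2}\Big] \]
is finite, $w_2$ being the weight appearing in~\eqref{eq:uL2} (and equal to $\log(e+t)$ when $n=2\sigma$); the estimates for the remaining $L^q$, $q\in(1+\alpha,\infty)$, then follow by interpolation. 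Note that $\bar n(\sigma)<3\sigma$, so in the admissible range one has $\alpha>\alpha_0>1$, i.e.\ $1+\alpha>2$, and $\|\cdot\|_{L^2}$ is genuinely not subsumed by the $L^q$ norms with $q\ge1+\alpha$, whence the separate term. The low-frequency estimates of Theorems~\ref{thlinearestimates} and~\ref{thm:lowder}, together with the high-frequency estimates of Section~\ref{sec:high} (where the non-effective damping produces exponential-in-time decay and a gain of derivatives, $\eta=\max\{2,n/(2\theta)\}$ being exactly the integrability these require), give $\|u^{\mathrm{lin}}\|_X\lesssim\|u_1\|_{L^1\cap L^\eta}$.

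The core step is the bound $\|Nu\|_X\lesssim\|u\|_X^{1+\alpha}$. I would control the nonlinearity in $L^m$, for every $m\ge1$, by
\[ \big\|\,|u(s,\cdot)|^{1+\alpha}\big\|_{L^m}=\|u(s,\cdot)\|_{L^{m(1+\alpha)}}^{1+\alpha}\lesssim\|u\|_X^{1+\alpha}\,(1+s)^{(1+\alpha)\left(1-\frac n\sigma\left(1-\frac1{m(1+\alpha)}\right)\right)}, \]
using $m(1+\alpha)\ge1+\alpha$, and then split $\int_0^t=\int_0^{t/2}+\int_{t/2}^t$. On $[0,t/2]$ one has $1+t-s\approx1+t$, hence the $L^1\cap L^\eta\to L^q$ estimate is used with its full rate $(1+t)^{1-\frac n\sigma(1-1/q)}$ (and the corresponding linear rates for the energy and for $\|\cdot\|_{L^2}$), leaving the $s$-integral $\int_0^{t/2}(1+s)^{(1+\alpha)-\frac{n\alpha}\sigma}\,ds$, which converges \emph{precisely} under our hypothesis because
\[ (1+\alpha)-\frac{n\alpha}\sigma<-1\quad\Longleftrightarrow\quad 2\sigma<(n-\sigma)\alpha\quad\Longleftrightarrow\quad\alpha>\alpha_0. \]
On $[t/2,t]$ one has $1+s\approx1+t$, hence the $L^m\to L^q$ estimate is used with $m$ close to $q$ (and $m=2$ for the energy and $L^2$ terms): the source decay is then frozen at $s\approx t$ while the $(1+t-s)$-factor is integrated over a window of length $\approx t$; collecting exponents, the dependence on $m$ cancels and the resulting inequality is once again equivalent to $\alpha\ge\alpha_0$. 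A routine bookkeeping then checks that every target rate defining $\|\cdot\|_X$ is reproduced, the high-frequency part of $Nu$ being absorbed at once thanks to its exponential time decay and smoothing, which also yields the $H^\sigma\cap L^\infty$ membership.

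The restriction $n\le\bar n(\sigma)$ enters \emph{only} through the linear input: the optimal low-frequency $L^p-L^q$ estimates of Theorem~\ref{thlinearestimates}, obtained in Section~\ref{sec:low} by the quasi-scaling localization in the extended phase space that decouples the diffusive and the oscillating components, hold with the rate $1-\frac n\sigma(1-1/q)$ on the whole range $1\le p\le q\le\infty$ exactly when $n\le\bar n(\sigma)$; for $n>\bar n(\sigma)$ only the estimates with loss of Remark~\ref{rem:loss} are available, which would replace $\alpha_0$ by the worse threshold $(\sigma+2\theta)/(n-\sigma)$. The nonlinear step itself imposes no further condition on $n$, which is exactly why the critical exponent comes out equal to $\alpha_0$ and independent of $\theta$.

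To close the scheme I would finally invoke $\big||u|^{1+\alpha}-|v|^{1+\alpha}\big|\lesssim(|u|^\alpha+|v|^\alpha)|u-v|$ and Hölder's inequality; the same estimates give $\|Nu-Nv\|_X\lesssim(\|u\|_X^\alpha+\|v\|_X^\alpha)\|u-v\|_X$, so that for $\|u_1\|_{L^1\cap L^\eta}<\epsilon$ with $\epsilon$ small enough $\Phi$ is a contraction on a small ball of $X$, whose unique fixed point is the desired $u$. Reading off the components of $\|u\|_X$ gives the energy, $L^q$ and $L^2$ estimates; time continuity and membership in $\mathcal C([0,\infty),H^\sigma\cap L^\infty)\cap\mathcal C^1([0,\infty),L^2)$ follow from the integral equation and the strong continuity of the linear propagator, and uniqueness in the energy class from a Gronwall comparison of two solutions sharing the same datum. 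I expect the main difficulty to lie not in any single step but in the simultaneous bookkeeping of all the $L^q$, energy and $L^2$ rates at both frequency scales while keeping at bay the loss of decay generated by the interplay of the diffusive and oscillating components; the genuinely hard analytic content is already packaged in the optimal low-frequency estimates of Section~\ref{sec:low}, whose range of validity is precisely what pins down $\bar n(\sigma)$.
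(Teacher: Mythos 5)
Your overall scheme coincides with the paper's: Duhamel's formula plus a contraction argument in a weighted space $X(T)$ whose norm encodes the target rates, the Duhamel integral split at $t/2$, $L^1$-based linear estimates on $[0,t/2]$, interpolation between $L^{1+\alpha}$ and $L^\infty$ to control $\||u(s,\cdot)|^{1+\alpha}\|_{L^m}$, and the convergence of $\int_0^{t/2}(1+s)^{(1+\alpha)-\frac{n\alpha}{\sigma}}\,ds$ being exactly the condition $\alpha>\alpha_0$. The linear input (Lemma~\ref{lem:lin} in the paper) is also as you describe.

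The step that would fail as written is your treatment of $[t/2,t]$: ``the $L^m-L^q$ estimate is used with $m$ close to $q$''. For $q=\infty$ with $n\geq2$ (and more generally whenever $n(1/2-1/q)\geq1$, which occurs for large $q=1+\alpha$ in dimensions $n\geq3$), a pair $(m,q)$ with $m$ close to $q$ violates the admissibility condition~\eqref{eq:rangeboth}, whose left-hand side tends to $n(1/2-1/q)$ as $m\to q$; the optimal linear estimate you invoke there is simply not available. The paper's fix is to take the specific intermediate exponent $p=n/(2\sigma)$ on $[t/2,t]$ when $n\geq2\sigma$, accepting the borderline rate $(1+t-s)^{-1}\log(e+t-s)$, and the admissibility of that pair reads $n\leq4\sigma-2$, which has to be checked against $n\leq\bar n(\sigma)$. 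This also shows that your assertion that the nonlinear step ``imposes no further condition on $n$'' is too quick: the existence of admissible pairs on $[t/2,t]$, and the requirement $\nu=\frac{n-2\sigma}{4\theta}<1$ in Lemma~\ref{lem:integral} for the $L^2$ component when $n>2\sigma$, are genuine constraints that happen to follow from $n\leq\bar n(\sigma)$ but must be verified. Two smaller inaccuracies: Theorem~\ref{thlinearestimates} does not hold ``on the whole range $1\leq p\leq q\leq\infty$'' when $n\leq\bar n(\sigma)$ (e.g.\ $p=q=1$ forces $n<2$), and the energy and $L^2$ components of the norm are obtained from the loss estimates of Theorem~\ref{thm:loss} (with $a=n/2$, resp.\ $a=(n-2\sigma)/2$) rather than from the optimal ones whenever $n\geq2\sigma$, which does occur inside the admissible range. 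The remaining bookkeeping — the cancellation of $m$ in the exponents, the equivalence with $\alpha>\alpha_0$, the Lipschitz estimate for the contraction — matches the paper once these choices are corrected.
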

The optimality of estimate~\eqref{eq:uL2} in Theorem~\ref{thnonlinear} is guaranteed for~$\theta=\sigma$ by Theorems 1.1, 1.2 and 1.3, in~\cite{Ikenew} in the cases $n>2\sigma$, $n=2\sigma$ and $n<2\sigma$, respectively; in particular, we see that some log-loss cannot be avoided when~$n=2\sigma$.
\begin{remark}\label{rem:barn}
We notice that $\bar n(\sigma)$ in~\eqref{eq:barn} is the solution to the second order equation
\begin{equation}\label{eq:barneq}
n^2 - (3\sigma-2) n - 2\sigma =0,
\end{equation}
and it satisfies the following properties: $\bar n(\sigma)\sim 3\sigma-2$ as~$\sigma\to\infty$, and~$\bar n(\sigma)-(3\sigma-2)$ is a decreasing function with respect to~$\sigma$, with its infimum given by~$\bar n(\sigma)-(3\sigma-2)\to 1$ as~$\sigma\to1$. In particular, $\bar n(\sigma)\in(3\sigma-2,3\sigma-1)$ for any~$\sigma>1$. Equation~\eqref{eq:barneq} corresponds to~\eqref{eq:rangebotheq} with~$p=1$ and~$q=1+\alpha_0$, i.e. to the maximum range for the space dimension~$n$, in which we may apply Theorem~\ref{thlinearestimates} with~$p=1$ and~$q=1+\alpha_0$.

Theorem~\ref{thnonlinear} remains valid, indeed, for~$\sigma\in(0,1)$, as well, due to the fact that the Theorem~\ref{thlinearestimates} is also valid for~$\sigma\in(0,1)$ (while our proof is not valid for~$\sigma=1$, see later, \eqref{eq:Hessian}). However, due to the expression of~$\bar n(\sigma)$ in~\eqref{eq:barn}, it only provides a result in space dimension~$n=1$ for~$\sigma\in[2/5,1)$.
\end{remark}
In particular, Theorem~\ref{thnonlinear} applies to the case of plate equation, $\sigma=2$.
\begin{example}
Let~$n=3,4$, $\theta\in(1,2]$, and consider the semilinear damped plate equation
\begin{equation}
\label{eq:CPplate}
\begin{cases}
u_{tt}+\Delta^2 u +(-\Delta)^\theta u_t=|u|^{1+\alpha}, \quad x\in \R^n, \, t\in\R_+,\\
u(0,x)=0, \\
u_t(0,x)=u_1(x).
\end{cases}
\end{equation}
Fix~$\alpha>4$ if~$n=3$ and~$\alpha>2$ if~$n=4$. Then there exists a constant $\epsilon>0$ such that for any~$u_1$ as in~\eqref{eq:datau}, there exists a uniquely determined energy solution $u\in \mathcal C([0, \infty),  H^2) \cap \mathcal C^1([0, \infty), L^{2})$ to  \eqref{eq:CPplate}. Moreover, the solution satisfies the energy estimate
\begin{equation*}
E(t)=\frac12 \|u_t(t,\cdot)\|_{L^2}^2+\frac12\|\Delta u(t,\cdot)\|_{L^2}^2\leq C\,(1+t)^{-\frac{n}{2\theta}}\,\big(\|u_1\|_{L^1}^2+\|u_1\|_{L^\eta}^2\big),
\end{equation*}
and the decay estimates
\[ \|u(t,\cdot)\|_{L^q}\leq C\,(1+t)^{1-\frac{n}2\left(1-\frac1q\right)}\,\big(\|u_1\|_{L^1}+\|u_1\|_{L^\eta}\big), \qquad \forall q\in[1+\alpha,\infty].\]
We mention that some plate models include also a term~$-\Delta u_{tt}$ called rotational inertia. Linear estimates for these models, for which a regularity-loss type decay appears, have been investigated in~\cite{CD09, CDI13p, CHI16, SK10}.
\end{example}
If~$\sigma\geq3$, we may also derive an existence result for problem~\eqref{eq:CPut}.
\begin{theorem}\label{thnonlinear2}
Assume that~$\sigma\geq3$ and that the damping is \emph{noneffective}, i.e., $2\theta\in(\sigma,2\sigma]$. Also assume that~$n\leq\sigma-2$. Fix~$\alpha>\alpha_1$, where
\begin{equation}\label{eq:alpha1}
\alpha_1=\frac{\sigma}n\,.
\end{equation}
Then there exists a constant $\epsilon>0$ such that for any
\begin{equation}\label{eq:dataut}
u_1\in L^{1}\cap L^{1+\alpha} \quad\text{with}\qquad \|u_1\|_{L^1}+\|u_1\|_{L^{1+\alpha}} < \epsilon,
\end{equation}
there exists a uniquely determined energy solution $u\in \mathcal C([0, \infty),H^{\sigma}) \cap \mathcal C^1([0, \infty), L^2\cap L^{1+\alpha})$ to~\eqref{eq:CPut}. 

Moreover, the solution satisfies the energy estimate
\begin{equation*}
E(t)=\frac12 \|u_t(t,\cdot)\|_{L^2}^2+\frac12\|(-\Delta)^{\frac\sigma2}u(t,\cdot)\|_{L^2}^2\leq C\,(1+t)^{-\frac{n}{2\theta}}\,\big(\|u_1\|_{L^1}^2+\|u_1\|_{L^{1+\alpha}}^2\big),
\end{equation*}
and the decay estimates
\begin{align*}
\|u(t,\cdot)\|_{L^\infty}
    & \leq C\,(1+t)^{1-\frac{n}\sigma}\,\big(\|u_1\|_{L^1}+\|u_1\|_{L^{1+\alpha}}\big),\\
\|u_t(t,\cdot)\|_{L^{1+\alpha}}
    & \leq C\,(1+t)^{-\frac{n}\sigma\left(1-\frac1{1+\alpha}\right)}\,\big(\|u_1\|_{L^1}+\|u_1\|_{L^{1+\alpha}}\big).
\end{align*}
The constant~$C>0$ does not depend on the initial data.
\end{theorem}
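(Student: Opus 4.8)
The plan is to run a contraction argument in a suitable weighted space, following the same scheme as for Theorem~\ref{thnonlinear}, but now tracking the nonlinearity $|u_t|^{1+\alpha}$ instead of $|u|^{1+\alpha}$. First I would set up the solution space. Since $n\leq\sigma-2$, the condition $\sigma<n$ is violated, so we are in a very low-dimensional regime relative to $\sigma$, and the linear decay estimates from Theorems~\ref{thlinearestimates} and~\ref{thm:lowder} (together with the high-frequency estimates of Section~\ref{sec:high}) give, for the linear problem~\eqref{eq:CPlin}, control of $\|u(t,\cdot)\|_{L^\infty}$ with rate $(1+t)^{1-n/\sigma}$ and of $\|u_t(t,\cdot)\|_{L^q}$ with rate $(1+t)^{-\frac{n}{\sigma}(1-1/q)}$ for the relevant $q\in\{1+\alpha,\infty\}$, plus the energy decay $(1+t)^{-n/(4\theta)}$ in $L^2$. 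Accordingly I would define
\[
X(T) = \Big\{ u\in \mathcal C([0,T],H^\sigma)\cap\mathcal C^1([0,T],L^2\cap L^{1+\alpha}): \|u\|_{X(T)}<\infty\Big\},
\]
with norm
\[
\|u\|_{X(T)} = \sup_{0\leq t\leq T}\Big( (1+t)^{-1+\frac n\sigma}\|u(t,\cdot)\|_{L^\infty} + (1+t)^{\frac n\sigma(1-\frac1{1+\alpha})}\|u_t(t,\cdot)\|_{L^{1+\alpha}} + (1+t)^{\frac{n}{4\theta}}E(t)^{1/2} + \ldots\Big),
\]
including whatever intermediate $L^2$-type norms of $u_t$ are needed to close the estimate (in particular $\|u_t(t,\cdot)\|_{L^2}$). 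The operator $N$ sends $u$ to the solution of~\eqref{eq:CPut} with right-hand side $|u_t|^{1+\alpha}$; by Duhamel, $Nu(t)=u^{\mathrm{lin}}(t)+\int_0^t E_1(t-s)\ast |u_s(s,\cdot)|^{1+\alpha}\,ds$, where $E_1(t)$ is the fundamental solution associated to the data $(0,u_1)$.

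Next I would estimate $\|Nu\|_{X(T)}\lesssim \|u_1\|_{L^1}+\|u_1\|_{L^{1+\alpha}} + \|u\|_{X(T)}^{1+\alpha}$ and the corresponding Lipschitz bound $\|Nu-Nv\|_{X(T)}\lesssim (\|u\|_{X(T)}+\|v\|_{X(T)})^\alpha\|u-v\|_{X(T)}$. The linear part is handled directly by the linear estimates. For the Duhamel term, the source term has $\||u_s(s,\cdot)|^{1+\alpha}\|_{L^1} = \|u_s(s,\cdot)\|_{L^{1+\alpha}}^{1+\alpha}\lesssim (1+s)^{-\frac n\sigma(1-\frac1{1+\alpha})(1+\alpha)}\|u\|_{X(T)}^{1+\alpha} = (1+s)^{-\frac n\sigma(\alpha)}\|u\|_{X(T)}^{1+\alpha}$ (using $(1+\alpha)(1-1/(1+\alpha))=\alpha$), and also $\||u_s(s,\cdot)|^{1+\alpha}\|_{L^{r}}$ for $r>1$ small, via interpolation between $\|u_s\|_{L^{1+\alpha}}$ and $\|u_s\|_{L^\infty}$ if $\alpha\geq 1$, or directly if $\alpha<1$. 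Then I split $\int_0^t = \int_0^{t/2}+\int_{t/2}^t$: on $[0,t/2]$ I apply the $L^1\to L^q$ (or $L^1\to$ energy) linear decay estimate at time $t-s\approx t$ and integrate the source decay $(1+s)^{-\alpha n/\sigma}$ in $s$; on $[t/2,t]$ I apply the $L^r\to L^q$ estimate near the diagonal at time $t-s$ (small) and use the source bound at $s\approx t$. The integrals converge precisely because $\alpha>\alpha_1=\sigma/n$, which makes $\alpha n/\sigma>1$, so the "long-time" integral $\int_0^{t/2}(1+s)^{-\alpha n/\sigma}\,ds$ is bounded; and the "near-diagonal" integral is handled by checking that the exponent on $(t-s)$ stays integrable, which again uses $n\leq\sigma-2$ to keep the linear decay rates mild. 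The role of $n\leq\sigma-2$ is exactly to guarantee that every decay rate appearing is such that the resulting time integrals in the Duhamel formula converge and reproduce the target rates in $\|\cdot\|_{X(T)}$.

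The main obstacle I expect is the same structural difficulty emphasized in the introduction: the asymptotic profile of the solution to~\eqref{eq:CPlin} has both a diffusive and an oscillating component with different scaling, so the naive linear estimate carries a loss of decay (Section~\ref{sec:loss}, Remark~\ref{rem:loss}). To reach the optimal threshold $\alpha_1=\sigma/n$ rather than something like $\sigma/(n-\sigma)$ shifted by $\theta$, I must use the \emph{split} estimates of Theorems~\ref{thlinearestimates} and~\ref{thm:lowder}, applying the sharp $L^p-L^q$ bounds to each component separately in the Duhamel integral, and in particular exploiting that the estimate for $u_t$ (one time derivative) gains an extra factor of $(1+t)^{-1}$ worth of decay relative to $u$ — this is why the nonlinearity $|u_t|^{1+\alpha}$ permits a smaller critical exponent than $|u|^{1+\alpha}$. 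A secondary technical point is the treatment of high frequencies: because the damping is noneffective, the high-frequency part of the fundamental solution is exponentially decaying but with a regularity-loss structure, so the $L^{1+\alpha}$ norm of $u_t$ at high frequencies (for $1+\alpha\neq 2$) must be handled by the $B^0_{p,2}$/$M_p^q$ multiplier machinery of the Appendix and Section~\ref{sec:high}, which is where the smoothing effect of the noneffective damping (the possibility of $L^1$–$L^1$ type bounds) becomes essential. Once all these estimates are assembled, the contraction mapping principle on a small ball of $X(T)$ gives a global solution with the asserted decay and energy bounds, uniformly in $T$, and letting $T\to\infty$ finishes the proof.
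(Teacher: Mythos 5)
Your overall scheme (Duhamel, weighted solution space, contraction) matches the paper's, and the choice of weights and the role of $\alpha>\sigma/n$ and $n\leq\sigma-2$ are correctly identified. But there is one genuine gap in the treatment of the Duhamel source term, and it is exactly the point where the proof of Theorem~\ref{thnonlinear2} differs from that of Theorem~\ref{thnonlinear}.

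You propose to control $\||u_t(s,\cdot)|^{1+\alpha}\|_{L^r}$ for some $r>1$ by interpolating $\|u_t(s,\cdot)\|_{L^{r(1+\alpha)}}$ between $\|u_t\|_{L^{1+\alpha}}$ and $\|u_t\|_{L^\infty}$ when $\alpha\geq1$. The solution space of the theorem is $\mathcal C^1([0,T],L^2\cap L^{1+\alpha})$: it does not control $\|u_t\|_{L^\infty}$, and under the hypotheses one always has $\alpha>\alpha_1=\sigma/n\geq\sigma/(\sigma-2)>1$, so the case ``$\alpha<1$, handled directly'' never occurs and $r(1+\alpha)>2$ for every $r>1$. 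Hence the only norm of the source available from $X(T)$ is $\||u_t|^{1+\alpha}\|_{L^1}=\|u_t\|_{L^{1+\alpha}}^{1+\alpha}$. Adding $\|u_t\|_{L^\infty}$ to the norm of $X(T)$ would force you to close an $L^p$--$L^\infty$ estimate for $\partial_t K$, enlarging the problem rather than solving it. The paper's way out is different: it measures the source \emph{only} in $L^1$ and compensates at high frequencies by invoking the singular estimates of Theorem~\ref{thm:sing} with $r=1$, i.e.
\[
\|\partial_x^\beta(-\Delta)^{\frac b2}\partial_t^\ell K_\infty(t-s,\cdot)\ast g\|_{L^q}\lesssim (t-s)^{-\delta}e^{-c(t-s)}\|g\|_{L^1},
\]
with $\delta\in(0,1)$ chosen so that $\delta>n/(4\theta)$ (energy terms) or $\delta>n/(2\theta)$ (the $L^{1+\alpha}$ estimate of $u_t$); such a $\delta<1$ exists precisely because $n\leq\sigma-2<2\theta$, and the singularity $(t-s)^{-\delta}$ is integrable near $s=t$. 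This is the device announced in the remark after Theorem~\ref{thlinearestimates} (``a singularity $t^{-\delta}$ may be managed when dealing with the nonlinear problems, if $\delta\in(0,1)$''), and it is what lets the contraction close in the stated space. Your mention of the Section~\ref{sec:high} machinery gestures in this direction, but the specific mechanism --- trading the need for $L^r$, $r>1$, control of the source against an integrable time singularity in an $L^1$-based high-frequency estimate --- is the missing idea. (Also, at low frequencies the paper applies the $L^1\to L^q$ estimates over the whole interval $[0,t]$ via Lemma~\ref{lem:integral}, without your $[0,t/2]\cup[t/2,t]$ splitting; and the high-frequency behavior here is a smoothing effect, not a regularity-loss structure --- but these are cosmetic points.)
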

The nonexistence counterpart of Theorems~\ref{thnonlinear} and~\ref{thnonlinear2} has been given in~\cite{DAE17NA} for integer powers~$\theta$ and~$\sigma$, in both the effective and noneffective cases, using a test function method which goes back to~\cite{MP} and some strategies introduced in~\cite{DAL03, MP09}. More in general, by using a novel test function recently developed in~\cite{F}, the nonexistence result remains valid in the subcritical ranges for fractional powers~$\theta$ and~$\sigma$, see Examples 6.3 and 6.4 in~\cite{DAF20}. Summarizing, we have the following.
\begin{proposition}
Let~$0\leq \theta\leq\sigma$, and assume that~$u_1\in L^1$ verifies
\begin{equation}\label{eq:datatest}
\int_{\R^n} u_1(x)\,dx >0.
\end{equation}
Then there exists no global (weak) solution to~\eqref{eq:CPu}:
\begin{itemize}
\item for any~$\alpha>0$ if~$n\leq \min\{2\theta,\sigma\}$;
\item for any
\[ \alpha\in\left(0, \frac{2\sigma}{n-\min\{2\theta,\sigma\}}\right),\]
if~$n>\min\{2\theta,\sigma\}$.
\end{itemize}
Moreover, there exists no global weak solution to~\eqref{eq:CPut} for any
\[ \alpha\in\left(0, \frac{\min\{2\theta,\sigma\}}n\right).\]
The nonexistence in the critical cases remains valid if both~$\sigma$ and~$\theta$ are integers (and in some other cases, see~\cite{DAF20}).
\end{proposition}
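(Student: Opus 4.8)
The plan is to recover this statement by the rescaled test function method of Mitidieri--Pohozaev type (cf.~\cite{MP, MP09}), which is exactly the scheme used in~\cite{DAE17NA, DAF20}; I would carry it out for~\eqref{eq:CPu} in detail and reduce~\eqref{eq:CPut} to it. First I would assume, for contradiction, that a global weak solution~$u$ exists, test the equation against an arbitrary~$\phi\in\mathcal C_c^\infty([0,\infty)\times\R^n)$, and integrate by parts in~$t$ and~$x$. Using~$u(0,\cdot)=0$, $u_t(0,\cdot)=u_1$ and the self-adjointness of~$(-\Delta)^\sigma$, this produces the identity
\[
\int_0^\infty\!\!\int_{\R^n}|u|^{1+\alpha}\phi\,dx\,dt + \int_{\R^n}u_1(x)\,\phi(0,x)\,dx = \int_0^\infty\!\!\int_{\R^n}u\,\big(\phi_{tt}+(-\Delta)^\sigma\phi-(-\Delta)^\theta\phi_t\big)\,dx\,dt,
\]
so that, choosing~$\phi(0,\cdot)\geq0$ equal to~$1$ near the origin, the second term on the left is a positive quantity which we must control.

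Next I would insert the rescaled test function~$\phi_R(t,x)=\big(\psi(R^{-\kappa}t)\,\chi(R^{-1}x)\big)^m$, with~$\psi,\chi\in\mathcal C_c^\infty$ nonnegative, equal to~$1$ at the origin, $m$ a large integer, and the time scaling~$\kappa>0$ to be fixed. By dominated convergence and~$u_1\in L^1$, $\int u_1\phi_R(0,\cdot)\to\int u_1>0$, hence this term is~$\geq\frac12\int u_1$ for~$R$ large. On the right, writing~$G_R:=\phi_{R,tt}+(-\Delta)^\sigma\phi_R-(-\Delta)^\theta\phi_{R,t}$ and applying H\"older's inequality with exponents~$1+\alpha$ and~$\frac{1+\alpha}{\alpha}$ to~$\iint u\,G_R=\iint(u\,\phi_R^{1/(1+\alpha)})(\phi_R^{-1/(1+\alpha)}G_R)$ (the exponent~$m$ chosen so that~$\phi_R^{-1/\alpha}|G_R|^{(1+\alpha)/\alpha}$ stays integrable), one gets
\[
\Big|\int_0^\infty\!\!\int_{\R^n}u\,G_R\Big|\ \lesssim\ \Big(\int_0^\infty\!\!\int_{\R^n}|u|^{1+\alpha}\phi_R\Big)^{\frac1{1+\alpha}}\,R^{\,(\kappa+n)\frac{\alpha}{1+\alpha}-\gamma},
\]
where~$|G_R|\lesssim R^{-\gamma}$ with~$\gamma=\min\{2\kappa,2\sigma,2\theta+\kappa\}$ and the support of~$G_R$ has measure~$\lesssim R^{\kappa+n}$. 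A piecewise-linear minimization of the exponent of~$R$ over~$\kappa$ selects~$\kappa=2\sigma-\min\{2\theta,\sigma\}$ and~$\gamma=2\sigma$, making that exponent~$\leq0$ precisely when~$n\leq\min\{2\theta,\sigma\}$ (for every~$\alpha>0$) or~$n>\min\{2\theta,\sigma\}$ and~$\alpha\leq 2\sigma/(n-\min\{2\theta,\sigma\})$, which is the claimed range.

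Then I would conclude by the usual dichotomy on~$I_\infty:=\iint_{(0,\infty)\times\R^n}|u|^{1+\alpha}$. If~$I_\infty<\infty$, the right-hand side of the displayed H\"older estimate tends to~$0$ (in the critical case~$\alpha=2\sigma/(n-\min\{2\theta,\sigma\})$, after restricting the H\"older estimate to the shrinking ``annular'' region where~$\nabla\psi$ and~$\nabla\chi$ are supported and using dominated convergence), contradicting~$\int u_1\phi_R(0,\cdot)\geq\frac12\int u_1>0$; if~$I_\infty=\infty$, the inequality~$I_R\lesssim I_R^{1/(1+\alpha)}+1$ (valid for~$R\geq1$, writing~$I_R:=\iint|u|^{1+\alpha}\phi_R$) forces~$I_R$ to stay bounded, again absurd. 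For~\eqref{eq:CPut} I would first use~$u(t,x)=\int_0^t u_s\,ds$ and Fubini to rewrite the right-hand side of the identity as~$\iint u_t\,H_R$ with~$H_R=-\phi_{R,t}+(-\Delta)^\sigma\!\int_t^\infty\phi_R(s,\cdot)\,ds+(-\Delta)^\theta\phi_R$, and the same optimization then yields~$\kappa=\gamma=\min\{2\theta,\sigma\}$ and the threshold~$\alpha\leq\min\{2\theta,\sigma\}/n$.

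The main obstacle — and the reason the critical cases are stated only for integer~$\sigma,\theta$ unless one invokes~\cite{F} — is that for non-integer powers~$(-\Delta)^\sigma\phi_R$ and~$(-\Delta)^\theta\phi_{R,t}$ are \emph{not} compactly supported: they decay only polynomially away from~$\{|x|\lesssim R\}$ (like~$R^{2\sigma}|x|^{-n-2\sigma}$, resp.~$R^{2\theta-\kappa}|x|^{-n-2\theta}$), and these heavy tails a priori destroy both the support localization and the exact scaling of~$\iint\phi_R^{-1/\alpha}|G_R|^{(1+\alpha)/\alpha}$ that the critical case relies on. I would handle this as in Examples~6.3--6.4 of~\cite{DAF20}, replacing~$\chi$ (and~$\psi$) by the test function specifically constructed in~\cite{F} so that the fractional operators applied to it still obey the clean rescaling bounds used above; for integer~$\sigma,\theta$ the operators are local, $G_R$ is genuinely compactly supported, and this difficulty does not arise, which is why the critical cases are then unconditional.
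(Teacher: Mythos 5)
The paper does not prove this Proposition at all: it is stated as a summary of results imported from~\cite{DAE17NA} (integer $\sigma,\theta$) and~\cite{DAF20, F} (fractional powers), whose proofs use exactly the rescaled Mitidieri--Pohozaev test function method you describe. Your reconstruction is correct and matches that approach — the weak identity, the H\"older/optimization step giving $\kappa=2\sigma-\min\{2\theta,\sigma\}$, $\gamma=2\sigma$ for~\eqref{eq:CPu} and $\kappa=\gamma=\min\{2\theta,\sigma\}$ for~\eqref{eq:CPut} reproduces the stated thresholds, and you correctly identify the loss of compact support of $(-\Delta)^\sigma\phi_R$ as the obstruction that forces the recourse to the test functions of~\cite{F} in the fractional case.
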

The proof of Theorems~\ref{thnonlinear} and~\ref{thnonlinear2} is heavily based on the possibility to obtain optimal $L^p-L^q$ decay estimates, $1\leq p\leq q\leq\infty$, by exploiting the oscillating part of the fundamental solution and ignoring its diffusive part, which would produce a worse decay, due to the different scaling properties. However, this optimality is valid in a range~$(p,q)$ depending on the space dimension~$n$, described by condition~\eqref{eq:rangeboth} below, which is related to the evolution part of the equation. Indeed, this condition is consistent with the one considered in~\cite{EL} for the damping-free $\sigma$-evolution equation in~\eqref{eq:evolution}.

In particular, for powers~$\alpha$ close to the critical exponent~$\alpha_0$ and~$\alpha_1$, condition~\eqref{eq:rangeboth} is valid only in low space dimension for the $L^1-L^{1+\alpha}$ estimate. This restriction allows us to use this technique to derive a sharp global existence result for problems~\eqref{eq:CPu} and~\eqref{eq:CPut} only in low space dimension. The problem to find the critical exponent in high space dimension remains open.
\begin{theorem}\label{thlinearestimates}
Let~$\sigma>1$ and assume a noneffective damping, that is, $\theta\in(\sigma/2,\sigma]$. Let~$1\leq p \leq r \leq q \leq \infty$, be such that
\begin{equation}\label{eq:rangeboth}
\frac{n}\sigma\left(\frac1p-\frac1q\right) + n\max\left\{\left(\frac12-\frac1p\right),\left(\frac1q-\frac12\right)\right\}<1.
\end{equation}
and
\begin{equation}\label{eq:regLr}
n\left(\frac1r-\frac1q\right)\leq 2\theta,
\end{equation}
if~$(r,q)\in(1,\infty)$, or~$n(1/r-1/q)<2\theta$ if~$r=1$ or~$q=\infty$.

If  $u_1\in L^p\cap L^r$,  then the solution $u$ to the Cauchy problem \eqref{eq:CPlin} satisfies the following estimate
\begin{equation}\label{linearenergyestimatesth}
\| u(t,\cdot)\|_{L^q} \lesssim\, (1+t)^{1-\frac{n}{\sigma}\left( \frac1{p}- \frac1{q}\right)}\,\|u_1\|_{L^p} + e^{-ct}\,\|u_1\|_{L^r}, \qquad  \forall t\geq 0.
\end{equation}
In particular, if we take~$r=p$ in assumption~\eqref{eq:regLr}, then we get the $L^p-L^q$ estimate
\begin{equation}\label{eq:LpLq}
\| u(t,\cdot)\|_{L^q} \lesssim\, (1+t)^{1-\frac{n}{\sigma}\left( \frac1{p}- \frac1{q}\right)}\,\|u_1\|_{L^p}, \qquad  \forall t\geq 0.
\end{equation}
Moreover, if equality holds in~\eqref{eq:rangeboth}, that is,
\begin{equation}\label{eq:rangebotheq}
\frac{n}\sigma\left(\frac1p-\frac1q\right) + n\max\left\{\left(\frac12-\frac1p\right),\left(\frac1q-\frac12\right)\right\}=1,
\end{equation}
then estimate~\eqref{linearenergyestimatesth} remains valid with a possible log-loss, that is,
\begin{equation}\label{eq:loglimit}
\| u(t,\cdot)\|_{L^q} \lesssim\, (1+t)^{n\max\left\{\left( \frac12-\frac1{p}\right), \, \left( \frac1q-\frac12\right)\right\}}\,\log(e+t)\,\|u_1\|_{L^p} + e^{-ct}\,\|u_1\|_{L^r}, \qquad  \forall t\geq 0;
\end{equation}
if~$1<p\leq 2\leq q<\infty$, the log-loss may be avoided and we obtain~\eqref{linearenergyestimatesth}, namely,
\begin{equation}\label{eq:nologloss}
\| u(t,\cdot)\|_{L^q} \lesssim\, (1+t)^{n\max\left\{\left( \frac12-\frac1{p}\right), \, \left( \frac1q-\frac12\right)\right\}}\,\|u_1\|_{L^p} + e^{-ct}\,\|u_1\|_{L^r}, \qquad  \forall t\geq 0.
\end{equation}
\end{theorem}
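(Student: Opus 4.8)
The plan is to pass to the Fourier transform in~\eqref{eq:CPlin}, solve the scalar ODE $\hat u_{tt}+\xii^{2\sigma}\hat u+\xii^{2\theta}\hat u_t=0$ with $\hat u(0,\xi)=0$, $\hat u_t(0,\xi)=\hat u_1(\xi)$ explicitly, and treat low and high frequencies separately, as announced in the introduction. Since $2\theta>\sigma$, for $\xii\leq\delta$ (with $\delta<1$ small, depending only on $\theta,\sigma$) the characteristic roots form a complex conjugate pair $-\tfrac12\xii^{2\theta}\pm i\omega$ with $\omega=\xii^\sigma\sqrt{1-\xii^{4\theta-2\sigma}/4}$, so $\hat u(t,\xi)=e^{-t\xii^{2\theta}/2}\,\omega^{-1}\sin(\omega t)\,\hat u_1(\xi)$; for $\xii\geq\delta$ the roots are real with $\mathrm{Re}\,\lambda_\pm\leq-c<0$, and the fundamental solution behaves like $\xii^{-2\theta}$ times an exponentially decaying factor. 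Fixing a cut-off $\chi\in\mathcal C_c^\infty$ which equals $1$ on $\{\xii\leq\delta/2\}$ and is supported in $\{\xii\leq\delta\}$, I split $u=v+w$ with $\hat v=\chi\hat u$, $\hat w=(1-\chi)\hat u$. The high-frequency part satisfies $\|w(t,\cdot)\|_{L^q}\lesssim e^{-ct}\|u_1\|_{L^r}$ by a Sobolev embedding of order $2\theta$ — which is exactly assumption~\eqref{eq:regLr}, and explains why strict inequality is needed at the endpoints $r=1$, $q=\infty$; this is carried out in Section~\ref{sec:high}. Everything hinges on the low-frequency term $v$.

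For $v$ I would use the dyadic decomposition $\hat v=\sum_{2^k\leq\delta}\psi_k\hat u$ and, on each shell $\xii\approx2^k$, rescale $\xi=2^k\eta$ onto the unit annulus $\abs{\eta}\approx1$. With $\tau:=2^{k\sigma}t$ and $s:=2^{2k\theta}t$ (so $s\leq\tau$, since $2\theta>\sigma$ and $k\leq0$) the $k$-th symbol becomes $2^{-k\sigma}\,e^{-s\abs{\eta}^{2\theta}/2}\,\omega_k(\eta)^{-1}\sin(\tau\,\omega_k(\eta))$, where $\omega_k(\eta)=\abs{\eta}^\sigma\sqrt{1-2^{k(4\theta-2\sigma)}\abs{\eta}^{4\theta-2\sigma}/4}$ is, for $k$ negative enough, a small smooth perturbation of $\abs{\eta}^\sigma$ whose Hessian on the annulus is \emph{non-degenerate} — here $\sigma\neq1$ is essential, because $\nabla^2\abs{\eta}^\sigma$ degenerates precisely at $\sigma=1$ (cf.~\eqref{eq:Hessian}); this is the quasi-scaling property of the introduction. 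By the scaling law for multipliers, $\|\mathfrak{F}^{-1}(\psi_k\hat u)\|_{L^q}\lesssim 2^{k(n(1/p-1/q)-\sigma)}\,\mathcal N_k\,\|\mathfrak{F}^{-1}(\psi_k\hat u_1)\|_{L^p}$, where $\mathcal N_k$ is the $M_p^q$ norm of the annulus-supported rescaled symbol. I would bound $\mathcal N_k$ by: if $\tau\lesssim1$, then $\omega_k^{-1}\sin(\tau\,\omega_k)$ is $\tau$ times a symbol of uniformly bounded $C^N$-norm, so $\mathcal N_k\lesssim\tau$; if $\tau\gtrsim1$, write $\sin(\tau\omega_k)=\tfrac1{2i}\big(e^{i\tau\omega_k}-e^{-i\tau\omega_k}\big)$ and invoke the oscillatory multiplier estimate collected in the Appendix — stationary phase for the non-degenerate phase $\omega_k$, uniformly in $k$ — which gives $\|\psi\,e^{\pm i\tau\omega_k}\|_{M_p^q}\lesssim(1+\tau)^{\beta}$ with $\beta:=n\max\{1/2-1/p,\,1/q-1/2\}$, while the derivatives of $e^{-s\abs{\eta}^{2\theta}/2}$ arising in the stationary phase expansion contribute a gain $e^{-cs}$. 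Thus $\mathcal N_k\lesssim\min\{\tau,(1+\tau)^{\beta}\}\,e^{-cs}$.

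It remains to sum over $k$. Over the (infinitely many) shells with $2^k\lesssim t^{-1/\sigma}$, where $\tau\lesssim1$, an $\ell^1$ sum of multiplier norms would diverge, so these are treated together — by Mikhlin--H\"ormander for $1<p<\infty$, or a direct rescaled kernel estimate at the endpoints — yielding exactly $(1+t)^{1-\frac n\sigma(1/p-1/q)}$. Over the finitely many ($\approx\log t$) shells with $2^k\gtrsim t^{-1/\sigma}$, pulling out $t^{\beta}$ one is left with $\sum 2^{kE}e^{-c\,2^{2k\theta}t}$, $E:=n(1/p-1/q)-\sigma+\sigma\beta$; the decisive algebraic fact is that $E<0$ is precisely condition~\eqref{eq:rangeboth} (divide by $\sigma$), in which case the series is dominated by its smallest shell $2^k\approx t^{-1/\sigma}$ and again produces $(1+t)^{1-\frac n\sigma(1/p-1/q)}$, whereas $E=0$ is precisely~\eqref{eq:rangebotheq}, and then the $\approx\log t$ shells with $t^{-1/\sigma}\lesssim2^k\lesssim t^{-1/(2\theta)}$ each contribute $\approx t^{\beta}$, producing the logarithmic loss in~\eqref{eq:loglimit}. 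When $1<p\leq2\leq q<\infty$ one removes this loss by summing the shell estimates in $\ell^2$ instead of $\ell^1$, via the Besov embeddings $L^p\hookrightarrow B^0_{p,2}$ (for $p\leq2$) and $B^0_{q,2}\hookrightarrow L^q$ (for $q\geq2$), so that only $\sup_k\mathcal N_k$ enters; this yields~\eqref{eq:nologloss}. Finally, taking $r=p$ makes the high-frequency tail $e^{-ct}\|u_1\|_{L^p}$ absorbable into the polynomial term, giving~\eqref{eq:LpLq}.

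The main obstacle is the uniform oscillatory estimate $\|\psi\,e^{\pm i\tau\omega_k}\|_{M_p^q}\lesssim(1+\tau)^{\beta}$: it must hold uniformly over the whole family of perturbed phases $\omega_k$ and, crucially, with the \emph{sharp} exponent $\beta=n\max\{1/2-1/p,1/q-1/2\}$, since it is this sharpness that makes~\eqref{eq:rangeboth} and~\eqref{eq:rangebotheq} emerge as the exact thresholds; establishing it requires a careful stationary phase analysis on the annulus (exploiting the non-degeneracy available only for $\sigma\neq1$) together with interpolation between the endpoint cases $(p,q)\in\{(1,1),(1,2),(1,\infty),(2,2),(\infty,\infty)\}$. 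A secondary technical point is the bookkeeping in the transition regime $s\approx1$, where the oscillatory decay and the diffusive factor $e^{-s\abs{\eta}^{2\theta}/2}$ have to be combined within a single stationary phase estimate.
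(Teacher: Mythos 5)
Your proposal is correct and follows essentially the same route as the paper: localization at low and high frequencies; treatment of the high-frequency part through the $2\theta$-smoothing of the symbol (Mikhlin--H\"ormander plus Hardy--Littlewood--Sobolev away from the endpoints, direct kernel bounds when $r=1$ or $q=\infty$, which is exactly why \eqref{eq:regLr} must be strict there); and, at low frequencies, a dyadic decomposition rescaled to the unit annulus, Littman's lemma for the non-degenerate perturbed phase (where $\sigma\neq1$ enters precisely as in \eqref{eq:Hessian}), interpolation of the $M_2$, $M_1$ and $M_1^\infty$ shell bounds, geometric summation whose convergence is equivalent to \eqref{eq:rangeboth}, a $\log t$ count of shells at equality, and the Besov embeddings $L^p\hookrightarrow B^0_{p,2}$, $B^0_{q,2}\hookrightarrow L^q$ to remove the logarithm when $1<p\leq2\leq q<\infty$; your per-shell bound $t^{\beta}2^{kE}$ agrees with the paper's after the change of bookkeeping. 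The one genuine deviation is the handling of the diffusive factor: the paper removes $e^{-t\xii^{2\theta}/2}$ once and for all via Young's inequality in \eqref{split} (its inverse Fourier transform has time-independent $L^1$ norm) and only then rescales the purely oscillating multiplier by $\eta=t^{1/\sigma}\xi$, whereas you carry $e^{-s\abs{\eta}^{2\theta}/2}$, $s=2^{2k\theta}t$, through each shell as an amplitude perturbation. Your variant also works, since on the annulus $\partial_\eta^\gamma e^{-s\abs{\eta}^{2\theta}/2}$ is bounded by $C_\gamma(1+s)^{\abs{\gamma}}e^{-cs}\leq C'_\gamma$ uniformly in $k$ and $t$, so Littman's lemma applies with constants uniform in the extra parameter; the paper's global factorization is simply cleaner and is its advertised ``splitting of the two components''. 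One cosmetic slip: for $\delta\leq\xii\lesssim 2^{1/(2\theta-\sigma)}$ the characteristic roots are still complex, not real; what your argument actually uses (and correctly states) is only $\mathrm{Re}\,\lambda_\pm\leq-c<0$ in that region.
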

A more general version of Theorem~\ref{thlinearestimates}, which includes derivatives of the solutions, is obtained combining Theorem~\ref{thm:lowder} and Theorem~\ref{thm:sing}, which provide, respectively, $L^p-L^q$ low frequencies estimates and~$L^r-L^q$ high frequencies estimates. Out of the range~$(p,q)$ determined by~\eqref{eq:rangeboth}, it is still possible to derive suitable $L^p-L^q$ decay estimates partially taking advantage of the diffusive part of the solution, but exploiting at most its oscillating part, as we do in Theorem~\ref{thm:loss}. Still, to treat the two components of the solution separately, a condition appears which restricts the~$(p,q)$ range.

The novel idea in this paper consists in treating separately the two components of the solution, the oscillating one, and the diffusive one. However, in some cases, this is not possible, as it happens in high space dimension, and mixing the two components together becomes necessary. For instance, $L^1-L^1$ estimates in high space dimension for the model with~$\sigma=\theta$ have been recently derived in~\cite{DAGL}. In the case studied therein, the strategy to split the two components of the solution was not possible.
\begin{remark}
The use of different regularities~$L^p$ and~$L^r$ in Theorem~\ref{thlinearestimates} is related to the different behavior of the solution operator at low and high frequencies, where we use~$L^p$ and, respectively, $L^r$ regularity of the data. Indeed, for a fixed~$q$, taking smaller values of~$p$ w.r.t~$q$ produces a higher decay rate in~\eqref{linearenergyestimatesth}, but managing smaller values of~$r$ w.r.t.~$q$ becomes more difficult, due to~\eqref{eq:regLr}. In Section~\ref{sec:high} we see how condition~\eqref{eq:regLr} may be removed (or relaxed when~$\sigma=\theta$) due to the \emph{smoothing effect}, if we allow a singularity at~$t=0$ (indeed a singularity~$t^{-\delta}$ may be managed when dealing with the nonlinear problems, if~$\delta\in(0,1)$).
\end{remark}
\begin{remark}
Let~$q=p'=p/(p-1)$, the H\"older conjugate of~$p$, that is, $1/p+1/q=1$. Then condition~\eqref{eq:rangeboth} is verified for any~$p\in[1,2]$ if~$\sigma>2n/(n+2)$, and for any~$p\in(1,2]$ such that
\begin{equation}\label{eq:conjugateok}
n\left(\frac1p-\frac12\right)<\frac\sigma{2-\sigma},
\end{equation}
otherwise. On the other hand, if~$p=q$, then condition~\eqref{eq:rangeboth} is verified if
\begin{equation}\label{eq:regularity}
n\left|\frac1p-\frac12\right| <1.
\end{equation}
This latter (together with the case in which the equality holds when~$p\neq1,\infty$), is the sharp condition to get a $L^p-L^p$ estimate for the $\sigma$-evolution equation damping-free. We remark that condition~\eqref{eq:conjugateok} is less restrictive than condition~\eqref{eq:regularity} for any~$\sigma>1$, but the limit of condition~\eqref{eq:conjugateok} as~$\sigma\to1$ gives~\eqref{eq:regularity}. 
\end{remark}
\begin{remark}
The control from above of the term
\[ \frac{n}\sigma\left(\frac1p-\frac1q\right) \]
in condition~\eqref{eq:rangeboth} is related to the decay rate profile for the solution to~\eqref{eq:CPlin} which originates from taking different spaces for the solution and the initial data. Larger distances, producing larger decay, are more difficult to control. On the other hand, the term
\[ n\max\left\{\left(\frac12-\frac1p\right),\left(\frac1q-\frac12\right)\right\} \]
may have a positive or negative sign. The sign is negative, when~$p\leq 2\leq q$, whereas the sign is positive when~$p\leq q\leq 2$ or, respectively, $2\leq p\leq q$. More precisely, it is~$n(1/q-1/2)$ or, respectively, $n(1/2-1/p)$. This term represents how more difficult become to control even $L^q-L^q$ or, respectively $L^p-L^p$ estimates when one goes away from the line~$p=q=2$, as it happens in evolution equations damping-free. When~$p\leq2\leq q$, this difficulty does not appear, since one rely on different methods, as Hausdorff-Young inequality or stationary phase methods, to derive the desired decay estimates, completely avoiding the theory of multipliers on~$L^p$. Indeed, it is well known that obtaining $L^p-L^q$ estimates is much easier when~$p\leq 2\leq q$.
\end{remark}

\subsection*{The difficulties arising in the limit case~$\sigma=1$}

As mentioned in Remark~\ref{rem:barn}, one may easily verify that Theorem~\ref{thlinearestimates} remains valid for $\sigma \in (0,1)$, as well as Theorems~\ref{thm:lowder} and~\ref{thm:sing}. However, this case has a limited interest, so we prefer to assume~$\sigma>1$, for brevity, to emphasize that our proof fails in the threshold case~$\sigma=1$, due to the lack of~\eqref{eq:Hessian}.

\bigskip

We expect that the case~$\sigma=1$ may still be treated with our approach, deriving a result similar to the one obtained for~$\sigma\neq1$, but with some influence from the fact that the Hessian of the function~$\xii$ is singular. For the wave equation with viscoelastic damping ($\sigma=\theta=1$ in~\eqref{eq:CPlin}), $L^1-L^{\infty}$ low-frequencies estimates for the solutions are obtained using the stationary phase method in \cite{S00}, and the decay rate~$(1+t)^{-\frac{3(n-1)}4}$ is derived in space dimension~$n\geq2$.

Let~$n\geq2$ and~$\theta=\sigma$. In space dimension~$n=2$, we may apply Theorem~\ref{thlinearestimates} for any~$\sigma>1$, and we get the decay rate~$(1+t)^{1-\frac2\sigma}$, which tends to~$(1+t)^{-1}$ as~$\sigma\to1$. This decay rate is better than the decay~$(1+t)^{-\frac34}$ derived in~\cite{S00} when~$\sigma=1$. On the other hand, let~$n\geq3$ and assume that~$\sigma\in(1,2n/(n+2))$. Applying Theorem~\ref{thm:loss}, we get the decay rate~$(1+t)^{-\frac{n-2}4-\frac{n}{2\sigma}}\,\log(e+t)$. As~$\sigma\to1$, this decay rate tends to~$(1+t)^{-\frac{3n-2}4}\,\log(e+t)$, which is again better than the decay rate in~\cite{S00} when~$\sigma=1$. It remains an open problem to show that the decay rate power is discontinuous at~$\sigma=1$ (likely, as a consequence of the singularity of the Hessian matrix).

\bigskip

The probable loss of decay rate appearing in the special case~$\sigma=1$ has the consequence that we cannot obtain in this case the same critical exponent~$\alpha_0=2\sigma/(n-\sigma)$, as in Theorem~\ref{thnonlinear}. In particular, let~$n=2$ and~$\sigma\in(1,2)$, with~$2\theta\in(\sigma,2\sigma]$. Theorem~\ref{thnonlinear} guarantees the global existence of small data energy solutions to~\eqref{eq:CPu} for~$\alpha>2\sigma/(2-\sigma)$. As~$\sigma\to1$, this value tends to~$2$. However, we do not expect that the critical exponent is~$2$ when~$\sigma=1$ and~$\theta\in(1/2,1]$ (by the results in~\cite{DAR14}, we know that global solutions exist for~$\alpha>1+2\theta$, but this result is very likely not optimal; see also the $L^1-L^1$ estimates obtained in~\cite{NR13}).

\bigskip

A hint on the situation of the effectively damped wave equation may come from the case of the undamped wave. The results obtained in \cite{P, S} imply that the solutions to the Cauchy problem for the free wave equation
\[u_{tt}-\Delta u =0, \quad
u(0,x)=0, \quad
u_t(0,x)=u_1(x),
\]
satisfies the $L^p-L^q$ estimates
\[\| u(t,\cdot)\|_{L^q} \lesssim\, (1+t)^{1-n\left( \frac1{p}- \frac1{q}\right)}\,\|u_1\|_{L^p}
\]
if, and only if,
\begin{equation}\label{sigma1}
n\left(\frac1p-\frac1q\right) + (n-1)\max\left\{\left(\frac12-\frac1p\right),\left(\frac1q-\frac12\right)\right\}\leq1.
\end{equation}
Unfortunately, these estimates are not of interest to treat the power nonlinearity~$|u|^{1+\alpha}$, due to the fact that condition \eqref{sigma1} is not satisfied for the pair $(p, q)=(1, 1+\alpha_0)$ for any~$n\geq2$, so we can not follow the ideas of the proof of Theorem \ref{thnonlinear} to  derive the critical exponent $\alpha_0=2/(n-1)$ for the free wave equation. On the other hand, for the case~$\sigma\in(1,2)$ in space dimension~$n=2$ the critical exponent for the undamped $\sigma$-evolution equation is still~$\alpha_0=2\sigma/(n-\sigma)$, see~\cite{EL}.

Indeed, it is well known that  the critical exponent for the undamped wave equation with power nonlinearity~$|u|^{1+\alpha}$ is the exponent~$\alpha_{\mathrm{Strauss}}$ conjectured by W.A. Strauss~\cite{Strauss} (see also~\cite{Georgiev, G, G2, GLS, Sc, Si}), which solves the algebraic equation
\[ \frac{n-1}2\,\alpha(\alpha+1)=\alpha+2. \]
This latter is strictly bigger than~$2/(n-1)$. For this reason, we expect that the critical exponent for~\eqref{eq:CPu} is ``somewhere'' between $2/(n-1)$ and~$\alpha_{\mathrm{Strauss}}$, when~$\sigma=1$. Possibly, it tends to~$2/(n-1)$ when~$\theta\to1/2$.

\section{Localization of the solution at low and high frequencies}\label{sec:localization}

We denote by
\[ \hat u(t,\xi) = \mathscr{F} [u(t,\cdot)](\xi) = \int_{\R^n} e^{-ix\xi}\,u(t,x)\,dx \]
the Fourier transform of~$u(t,x)$ with respect to the space variable. Then~$\hat u$ solves the Cauchy problem for the damped harmonic oscillator
\begin{equation}
\label{CPF}
\begin{cases}
\hat u_{tt}+\xii^{2\sigma} \hat u +\xii^{2\theta} \hat u_t=0, \quad t\in\R_+,\\
\hat u(0,\xi)=0, \\
\hat u_t(0,\xi)=\hat u_1(\xi),
\end{cases}
\end{equation}
for any~$\xi\in\R^n$. If we write
\[ u(t,x) = K(t,x)\ast_{(x)} u_1(x), \]
where~$K$ is the fundamental solution to~\eqref{CPF}, then
\[ \hat K(t,\xi) = t\,e^{-t\xii^{2\theta}/2}\,\sinc (t\omega), \quad \omega=\xii^\sigma\,\sqrt{1-\xii^{4\theta-2\sigma}/4}, \]
for any~$\xi$ such that~$\xii^{2\theta-\sigma}<2$, whereas
\[ \hat K(t,\xi) = \frac{e^{\lambda_+t}-e^{\lambda_-t}}{\lambda_+-\lambda_-}\,, \quad \lambda_\pm(\xi) = -\frac12\,\xii^{2\theta} \big(1\mp \sqrt{1-4\xii^{2\sigma-4\theta}}\big). \]
In particular, $\lambda_-(\xi)\sim -\xii^{2\theta}$ and~$\lambda_+(\xi)\sim-\xii^{\sigma-\theta}$ as~$\xii\to\infty$.

It is clear that for any~$t\geq0$, $\hat K(t,\cdot)$ is smooth in~$\R^n\setminus\{0\}$. To deal with~$K$, it is convenient to localize it at low and high frequencies. We fix small~$\eps_0\in(0,1)$ and large~$N_\infty\gg2^{\frac{1}{2\theta-\sigma}}$, and we fix~$\varphi_{0}\in\mathcal C_c^\infty(\R^n)$,  and $\varphi_\infty$ in $\mathcal C^{\infty}(\mathbb{R}^n)$ such that
\begin{equation}\label{eq:var}
\varphi_0(\xi)=\begin{cases}
                     1 & \text{if~$\abs{\xi}\leq \eps_0/2$,} \\
                     0 & \text{if~$\abs{\xi}\geq \eps_0$,}
                   \end{cases} \qquad
\varphi_\infty(\xi)=\begin{cases}
                     1 & \text{if~$\abs{\xi}\geq 2N_\infty$,} \\
                     0 & \text{if~$\abs{\xi}\leq N_\infty$.}
                   \end{cases}
\end{equation}
We also put $\varphi_1=1-(\varphi_0+\varphi_\infty)\in \mathcal C_c^{\infty}(\mathbb{R}^n)$. We now define
\[ K_j= \mathfrak{F}^{-1} (\varphi_j\,\hat K), \]
so that~$K=K_0+K_1+K_\infty$, where~$K_0$, $K_1$ and~$K_\infty$ are the localization of the fundamental solution at low, intermediate, and high frequencies.

Let~$\beta\in\N^n$, $\ell\in\N$ and~$b\geq0$. At intermediate frequencies, the estimate
\[ \|\partial_x^\beta (-\Delta)^{\frac{b}2} \partial_t^\ell K_1(t, \cdot)\ast u_1\|_{L^q} \leq C\,e^{-ct}\,\|u_1\|_{L^p}, \]
trivially follows for any~$1\leq p\leq q\leq\infty$, for any~$t\geq0$, for some~$C,c>0$, independent of the data. Indeed, the claim follows from the fact that
\[ \|\partial_x^\beta (-\Delta)^{\frac{b}2}\partial_t^j K_1(t, \cdot)\|_{L^1} \leq \| (i\xi)^\beta \xii^b \varphi_1\,\partial_t^j \hat K(t, \cdot)\|_{H^m} \leq Ce^{-ct}, \]
for~$m>n/2$, integer, and
\[ \|\partial_x^\beta  (-\Delta)^{\frac{b}2} \partial_t^j K_1(t, \cdot)\|_{L^\infty} \leq \| (i\xi)^\beta\xii^b \varphi_1\,\partial_t^j \hat K(t, \cdot)\|_{L^1} \leq Ce^{-ct}, \]
for some~$C,c>0$, so that it is sufficient to apply Young inequality. For this reason, as it is expected for this kind of problems, we may focus our attention in the study at low and high frequencies. Our main interest is into derive new estimates at low frequencies, where oscillations appear, based on a new strategy to approach the analysis of~$K_0(t,x)$.

\section{$L^p-L^q$ low frequencies estimates ($\xii\leq\eps_0$) for the solution}\label{sec:low}

First of all, we give a straight-forward regularity result.
\begin{proposition}\label{ppsmalltime}
Let $T\geq1$. Then, for any $t\in[0,T]$ and $1\leq p\leq q\leq \infty$, it holds
\[
\| K_0(t,\cdot)\ast u_1\|_{L^q}\leq C(T) \norm{u_1}_{L^p},
\]
for some~$C(T)$, independent of~$u_1$.
\end{proposition}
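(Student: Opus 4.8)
The plan is to reduce the claim to an $L^1$ bound on the kernel $K_0(t,\cdot)$ itself, uniformly for $t$ in the compact interval $[0,T]$, after which Young's convolution inequality immediately gives $\|K_0(t,\cdot)\ast u_1\|_{L^q}\le \|K_0(t,\cdot)\|_{L^1}\|u_1\|_{L^q}$; since $T\ge1$ and $\operatorname{supp}\varphi_0\subset\{\xii\le\eps_0\}$ is bounded, the embedding $L^p\hookrightarrow L^q$ is \emph{not} available on $\R^n$, so instead I would exploit that $\hat K_0(t,\cdot)=\varphi_0\hat K(t,\cdot)$ is supported in a fixed compact set, hence $K_0(t,\cdot)\ast u_1$ has Fourier transform supported in $\{\xii\le\eps_0\}$ and one may invoke a Bernstein-type inequality (functions with Fourier support in a fixed ball satisfy $\|f\|_{L^q}\lesssim\|f\|_{L^p}$ for $p\le q$). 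Concretely: write $K_0(t,\cdot)\ast u_1 = \mathfrak F^{-1}(\varphi_0 \hat K(t,\cdot))\ast u_1 = \mathfrak F^{-1}(\tilde\varphi_0)\ast K_0(t,\cdot)\ast u_1$ where $\tilde\varphi_0\in\mathcal C_c^\infty$ equals $1$ on $\operatorname{supp}\varphi_0$; then by Young, $\|K_0(t,\cdot)\ast u_1\|_{L^q}\le \|\mathfrak F^{-1}\tilde\varphi_0\|_{L^s}\,\|K_0(t,\cdot)\ast u_1\|_{L^p}$ with $1/s = 1-1/p+1/q$, and a second application of Young gives $\|K_0(t,\cdot)\ast u_1\|_{L^p}\le\|K_0(t,\cdot)\|_{L^1}\|u_1\|_{L^p}$. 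So everything comes down to bounding $\|K_0(t,\cdot)\|_{L^1}$ on $[0,T]$.

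For the $L^1$ bound on $K_0$, I would pass to the frequency side and use that $\hat K(t,\xi) = t\,e^{-t\xii^{2\theta}/2}\,\sinc(t\omega)$ for $\xii$ small (this formula is valid on all of $\operatorname{supp}\varphi_0$ once $\eps_0$ is chosen with $\eps_0^{2\theta-\sigma}<2$, which we may assume). The standard device is: for $m>n/2$ an integer,
\[
\|K_0(t,\cdot)\|_{L^1}\;\lesssim\;\|\varphi_0\,\hat K(t,\cdot)\|_{H^m}\;\lesssim\;\sum_{|\gamma|\le m}\|\partial_\xi^\gamma(\varphi_0\hat K(t,\cdot))\|_{L^2},
\]
using $\||x|^m K_0(t,x)\|_{L^2}\lesssim\|\partial_\xi^\gamma(\varphi_0\hat K)\|_{L^2}$ together with $\|K_0(t,\cdot)\|_{L^1}\lesssim \|(1+|x|^2)^{m/2}K_0(t,\cdot)\|_{L^2}$ and Plancherel. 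Since $\varphi_0\hat K(t,\cdot)$ is supported in the fixed ball $\{\xii\le\eps_0\}$, it suffices to bound each $\partial_\xi^\gamma(\varphi_0\hat K(t,\cdot))$ in $L^\infty$ on that ball by a constant depending only on $T$. By the Leibniz rule this amounts to controlling $\partial_\xi^\gamma \hat K(t,\xi)$ for $|\gamma|\le m$ on $\{\xii\le\eps_0\}$ and $t\in[0,T]$. Each $\xi$-derivative either hits the factor $t\,e^{-t\xii^{2\theta}/2}$ — producing extra factors polynomial in $t$ and in derivatives of $\xii^{2\theta}$, all bounded on the compact set by a $C(T)$ — or hits $\sinc(t\omega)$; here one uses that $\sinc$ is a smooth even function, all of whose derivatives are bounded on $\R$, that $\omega = \xii^\sigma\sqrt{1-\xii^{4\theta-2\sigma}/4}$ is smooth on $\{0<\xii\le\eps_0\}$ with $\partial_\xi^\gamma\omega$ having at worst a mild singularity near $\xi=0$ compensated by additional powers of $t$ absorbed into $C(T)$. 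The only point needing a little care is the behaviour near $\xi=0$: since $\sigma>1$, $\omega$ and $\xii^{2\theta}$ vanish to order $>1$ at the origin, so $\hat K(t,\cdot)$ extends to a $\mathcal C^m$ function near $0$ for $m$ up to a bound depending on $\sigma,\theta,n$ — but because we only need finitely many derivatives and a time-dependent constant, one can always trade a borderline negative power of $\xii$ for a power of $t$ (using $t^a\xii^a\,|\sinc^{(k)}(t\omega)\omega^{-a}|\lesssim t^a$-type estimates, since $|\sinc^{(k)}(\rho)|\lesssim\langle\rho\rangle^{-1}$ for $k\ge0$). Collecting, $\|K_0(t,\cdot)\|_{L^1}\le C(T)$ uniformly on $[0,T]$.

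The main obstacle I anticipate is precisely this low-frequency singularity: the derivatives $\partial_\xi^\gamma\omega$ blow up as $\xi\to0$, and naively $\hat K(t,\cdot)$ need not be $\mathcal C^m$ at the origin for all $m$ (only for $m$ up to some order tied to $\sigma$). The resolution — and the reason the statement only asks for a constant $C(T)$ rather than a uniform-in-$t$ bound — is that every negative power of $\xii$ arising from differentiation can be matched against a positive power of $t$ coming from the factor $t\,e^{-t\xii^{2\theta}/2}$ and from $\sinc(t\omega)=\sin(t\omega)/(t\omega)$, so on the compact time interval the singularity is harmless. Everything else (the dyadic/Bernstein reduction, Young's inequality, the Sobolev embedding $H^m\hookrightarrow$ Fourier transform of $L^1$) is routine and of the type already used for the intermediate-frequency part $K_1$ in Section~\ref{sec:localization}.
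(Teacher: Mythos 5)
Your overall architecture matches the paper's: reduce the convolution estimate to bounds on the kernel $K_0(t,\cdot)$ via Young's inequality and the compact Fourier support, then control $\partial_\xi^\gamma(\varphi_0\hat K(t,\cdot))$ on $\{\xii\le\eps_0\}$. Your Bernstein reduction and the $H^m$-criterion with $m>n/2$ are legitimate substitutes for what the paper actually does (it shows $K_0(t,\cdot)\in L^1\cap L^\infty$ directly via the weighted integration-by-parts Lemma~\ref{lem:parts} and applies Young once with the intermediate exponent $1/s=1-1/p+1/q$). But there is a genuine gap at the decisive step. First, it is \emph{not} true that each $\partial_\xi^\gamma(\varphi_0\hat K(t,\cdot))$ is bounded in $L^\infty$ on the ball: writing $\hat K=e^{-t\xii^{2\theta}/2}\,\omega^{-1}\sin(t\omega)$ and Taylor-expanding $\omega^{-1}\sin(t\omega)=t+O(t^3\omega^2)$ as the paper does, the bound one obtains is $|\partial_\xi^\gamma(\varphi_0\hat K)|\le C(T)\,(1+\xii^{2\theta-|\gamma|})$, which blows up at the origin as soon as $|\gamma|>2\theta$ --- and $m=\lfloor n/2\rfloor+1$ exceeds $2\theta$ already in moderate dimension. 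Second, the repair you propose --- matching negative powers of $\xii$ against positive powers of $t$ --- cannot work: on $[0,T]$ a factor $t^a$ is just a bounded constant and does nothing against a singularity as $\xi\to0$ at \emph{fixed} $t$; and any trade through the combinations $t\xii^{2\theta}$ or $t\omega$ (e.g.\ $e^{-t\xii^{2\theta}/2}\lesssim (t\xii^{2\theta})^{-N}$, or $|\sinc^{(k)}(t\omega)|\lesssim (1+t\omega)^{-1}$) produces \emph{more} negative powers of $\xii$, not fewer.

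The correct resolution is that the singularity, while present, is integrable in the norm you actually need. For your route: $\xii^{2\theta-|\gamma|}\in L^2(\{\xii\le\eps_0\})$ if and only if $|\gamma|<n/2+2\theta$, and the minimal admissible order $m=\lfloor n/2\rfloor+1$ satisfies $m\le n/2+1<n/2+2\theta$ precisely because $2\theta>\sigma>1$ in the noneffective range; so your $H^m$ argument goes through once the $L^\infty$ bounds are replaced by these $L^2$ bounds (this is also how the paper treats the intermediate-frequency piece $K_1$, where no origin singularity occurs). The paper instead feeds the bound $C(T)(1+\xii^{2\theta-|\gamma|})$ into Lemma~\ref{lem:parts}, which is designed exactly for symbols with power singularities at the origin and yields the pointwise decay $|K_0(t,x)|\le C(T)(1+|x|)^{-n-\epsilon}$ for some $\epsilon>0$, hence $K_0(t,\cdot)\in L^1\cap L^\infty$; that version of the argument also covers $2\theta\le1$, which your $H^m$ route would not.
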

\begin{proof}
To prove Proposition~\ref{ppsmalltime}, it is sufficient to show that~$K_0(t,\cdot)\in L^1\cap L^\infty$ and apply Young inequality. In order to do that we apply Lemma~\ref{lem:parts}. We notice that~$|\omega^{-1}\sin(t\omega)|\leq t$. On the other hand,
\begin{equation}
\label{eq:Taylorsin}
\omega^{-1}\sin(t\omega) = t - \frac1{6}\,t^3\omega^2\,\int_0^1 (1-\rho)^3\sin(\rho\omega t)\,d\rho,
\end{equation}
by Taylor's formula, so that we easily derive
\[ \forall\gamma\neq0: \qquad |\partial_\xi^\gamma (\omega^{-1}\sin(t\omega))| \lesssim t^3\,\xii^{2\sigma-|\gamma|}, \]
for any~$\xii\leq \eps_0$. On the other hand,
\[ \forall\gamma\neq0: \qquad |\partial_\xi^\gamma e^{-t\xii^{2\theta}/2}| \lesssim t\,\xii^{2\theta-|\gamma|}. \]
Therefore, recalling that~$\theta\leq\sigma$ and that~$\varphi_0\in\mathcal C_c^\infty$, we get
\[ \forall \gamma: \qquad \big|\partial_\xi^\gamma \big(\varphi_0\hat K(t,\xi)\big)\big| \lesssim C(T)\,(1+\xii^{2\theta-|\gamma|}). \]
If~$2\theta>1$, we may now apply the first part of Lemma~\ref{lem:parts} with~$\kappa=n+1$, obtaining~$|K_0(t,x)|\leq C(T)(1+|x|)^{-n-1}$, and this concludes the proof. If~$2\theta\in(0,1]$, then we may apply the second part of Lemma~\ref{lem:parts} with~$\kappa=n$, $a=n-2\theta$ and~$a_1=a+1$, obtaining
\[ |K_0(t,x)|\leq \begin{cases}
C(T)(1+|x|)^{-n-2\theta} & \text{if~$2\theta\in(0,1)$,} \\
C(T)(1+|x|)^{-n-1}\,\log(e+|x|) & \text{if~$2\theta=1$,}
\end{cases} \]
and this concludes the proof.
\end{proof}
In view of Proposition~\ref{ppsmalltime}, with no loss of generality, in this section we may now assume~$t\geq1$.

For any $1\leq p\leq q\leq \infty$ we now estimate
\begin{equation}\label{split}
\|K_0(t,\cdot)\|_{L^q_p}\leq t\,\|\mathscr{F}^{-1}(e^{-t|\xi|^{2\theta}/2})\|_{L^1}\,\|\mathscr{F}^{-1}(\sinc(\omega t)\varphi_0)\|_{L^q_p}=Ct\,\|\mathscr{F}^{-1}(\sinc(\omega t)\varphi_0)\|_{L^q_p},
\end{equation}
where we used that
\[ \|\mathscr{F}^{-1}(e^{-t|\xi|^{2\theta}/2})\|_{L^1}= \|\mathscr{F}^{-1}(e^{-|\xi|^{2\theta}/2})\|_{L^1}=C,\]
for any~$t>0$.

We will now focus our attention on the oscillating part of the fundamental solution, forgetting about its diffusive part. As we will see later in Section~\ref{sec:loss}, this is the best strategy for $L^p-L^q$ estimates only in some $(p,q)$ range, the one that we are interested to prove the existence result in the whole supercritical range of powers~$\alpha$ in Theorems~\ref{thnonlinear} and~\ref{thnonlinear2}.

It is important to remark now that~$\varphi_0\sinc t\omega$ is not scale-invariant as it happens to~$\sinc t\xii^\sigma$ with a $\sigma$-evolution equation without damping as in~\eqref{eq:evolution}. Indeed, in such a case, one would be able to derive
\[ \| \mathscr{F}^{-1}(\sinc(\xii^\sigma t))\|_{L^q_p}= t^{-\frac{n}{\sigma}(\frac{1}{p}-\frac{1}{q})}\| \mathscr{F}^{-1}(\sinc(\xii^\sigma))\|_{L^q_p}, \]
and work directly with a time-independent kernel. However, in our case, even without the scale-invariance, we may still perform a change of variable which allows to treat the time-dependent part as a perturbation.

By the change of variable $\eta=t^{\frac{1}{\sigma}}\xi$, for any $1\leq p\leq q\leq \infty$, it holds
\begin{equation}
\| \mathscr{F}^{-1}(\sinc(\omega t)\varphi_0)\|_{L^q_p}= t^{-\frac{n}{\sigma}(\frac{1}{p}-\frac{1}{q})}\| \tilde K_0(t,\cdot)\|_{L^q_p},
\end{equation}
where
\begin{align*}
\tilde K_0(t,x)&= \mathscr{F}^{-1}(\sinc(\tilde\omega(t,\eta))\tilde\varphi_0(t,\eta)),\\
\tilde\omega(t,\eta)&=|\eta|^\sigma \,\sqrt{1-t^{2-\frac{4\theta}{\sigma}}|\eta|^{4\theta-2\sigma}/4},\\
\tilde\varphi_0(t,\eta)&=\varphi_0(t^{-\frac{1}{\sigma}}\eta).
\end{align*}
Clearly, $\tilde K_0(t,\cdot)$ is supported in~$\{|\eta|\leq \eps_0\,t^\frac{1}{\sigma}\}$ and $\tilde\omega(t,\eta)\approx |\eta|^\sigma$. More precisely,
\begin{equation}\label{eq:perturbation}
\tilde\omega(t,\eta) = |\eta|^\sigma + \textit{O}(\eps_0),
\end{equation}
together with its derivatives. The main reason to perform a change of variable is that, due to the oscillations, the derivatives of~$\sinc\tilde\omega$ have a different behavior for small~$|\eta|$ and for large~$|\eta|$. We emphasize that large values of~$|\eta|$ are possible when~$t\gg\eps_0^{-\sigma}$. Using that
\[ |\sinc^{(k)}\rho|\leq C\,(1+\rho)^{-1},\]
we obtain
\[
\forall\gamma\neq0: \quad |\partial_\eta^{\gamma} \sinc\tilde\omega|\lesssim \begin{cases}
|\eta|^{-\sigma+(\sigma-1)|\gamma|} &\text{ if } 1<|\eta|<\eps_0t^{\frac{1}{\sigma}},\\
|\eta|^{2\sigma-|\gamma|} &\text{ if } |\eta|<2. \end{cases}
\]
We now split our analysis in two cases, considering small and large values of the ``new frequencies'' $\eta$. We fix~$\chi\in\mathcal C_c^\infty$, supported in~$\{|\eta|\leq2\}$, with~$\chi(\eta)=1$ for~$|\eta|\leq1$, and we write
\begin{align*}
K_{0,0}(t,x)
    & =\mathfrak{F}^{-1} \big(\chi\mathfrak{F}(\tilde K_0)(t,\cdot)\big)=\mathfrak{F}^{-1} \big(\chi\tilde\varphi_0(t,\cdot)\,\sinc(\tilde\omega(t,\cdot))\big),\\
K_{0,1}(t,x)
    & =\mathfrak{F}^{-1} \big((1-\chi)\mathfrak{F}(\tilde K_0)(t,\cdot)\big)=\mathfrak{F}^{-1} \big((1-\chi)\tilde\varphi_0(t,\cdot)\,\sinc(\tilde\omega(t,\cdot))\big).
\end{align*}
To study~$K_{0,0}$ we may proceed as we did in Proposition~\ref{ppsmalltime}.
\begin{lemma}\label{lem:K00}
For any~$t\geq1$, it holds~$K_{0,0}(t,\cdot)\in L^1\cap L^\infty$ and
\[ \|K_{0,0}(t,\cdot)\|_{L^1}+\|K_{0,0}(t,\cdot)\|_{L^\infty} \leq C, \]
uniformly with respect to~$t\geq1$.
\end{lemma}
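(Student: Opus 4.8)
The plan is to show that $\mathfrak{F}(K_{0,0})(t,\cdot) = \chi(\eta)\,\tilde\varphi_0(t,\eta)\,\sinc(\tilde\omega(t,\eta))$ is a Fourier multiplier of type $(1,1)$ and of type $(\infty,\infty)$ uniformly in $t\geq1$, which by duality and Young's inequality amounts to bounding $\|K_{0,0}(t,\cdot)\|_{L^1}$; the $L^\infty$ bound then follows by integrability of the symbol itself. Since the symbol is supported in $\{|\eta|\leq2\}$ and is there bounded by $1$ (as $|\sinc\rho|\leq1$ and $0\leq\chi,\tilde\varphi_0\leq1$), we immediately get $\|K_{0,0}(t,\cdot)\|_{L^\infty}\leq\|\chi\tilde\varphi_0\sinc\tilde\omega\|_{L^1}\leq C$ with $C$ independent of $t$. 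So the work is entirely in the $L^1$ estimate of $K_{0,0}(t,\cdot)$.

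For the $L^1$ bound I would argue exactly as in Proposition~\ref{ppsmalltime}, via the integration-by-parts lemma (Lemma~\ref{lem:parts}). The key point is that on the support $\{|\eta|\leq2\}$ the symbol behaves like a smooth truncation of $\sinc(|\eta|^\sigma)$: from the stated Taylor expansion $\omega^{-1}\sin(t\omega)=t+O(t^3\omega^2\cdots)$ — equivalently $\sinc\rho = 1 - \tfrac16\rho^2+\cdots$ — and from \eqref{eq:perturbation}, which gives $\tilde\omega(t,\eta)=|\eta|^\sigma+O(\eps_0)$ together with its derivatives, one obtains for $|\eta|<2$ the bounds
\[
\forall\gamma\neq0:\qquad |\partial_\eta^\gamma\sinc\tilde\omega|\lesssim |\eta|^{2\sigma-|\gamma|},
\]
which is the estimate already recorded in the excerpt. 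Combining this with $|\partial_\eta^\gamma\chi|\lesssim1$, and noting that $\tilde\varphi_0(t,\eta)=\varphi_0(t^{-1/\sigma}\eta)$ is identically $1$ on $\{|\eta|\leq2\}$ for $t\geq1$ large enough (and in any case has derivatives bounded uniformly in $t\geq1$, since each $\eta$-derivative carries a factor $t^{-|\gamma|/\sigma}\leq1$), we conclude
\[
\forall\gamma:\qquad \big|\partial_\eta^\gamma\big(\chi\,\tilde\varphi_0(t,\eta)\,\sinc\tilde\omega(t,\eta)\big)\big|\lesssim 1+|\eta|^{2\sigma-|\gamma|},
\]
uniformly in $t\geq1$. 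Since $2\sigma>2>1$, Lemma~\ref{lem:parts} (first part, with $\kappa=n+1$) yields $|K_{0,0}(t,x)|\leq C(1+|x|)^{-n-1}$ with $C$ independent of $t$, hence $K_{0,0}(t,\cdot)\in L^1$ uniformly, completing the proof.

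The main obstacle, such as it is, is purely bookkeeping: one must check that the $\eps_0$-perturbation in $\tilde\omega$ does not spoil the derivative bounds near $|\eta|=0$ (so that the symbol really is $\sinc$ of something that vanishes to order $\sigma$ at the origin, with no negative powers of $|\eta|$ appearing), and that the $t$-dependence entering through $\tilde\varphi_0$ and through the $t^{2-4\theta/\sigma}$ factor inside the square root in $\tilde\omega$ is genuinely harmless on the bounded region $\{|\eta|\leq2\}$ — which it is, because on this region that factor is multiplied by $|\eta|^{4\theta-2\sigma}$ and, since $2\theta\in(\sigma,2\sigma]$ one can check $\eps_0$ can be chosen so that $t^{2-4\theta/\sigma}|\eta|^{4\theta-2\sigma}$ stays uniformly small for all $t\geq1$ on the new-frequency support, as already asserted in \eqref{eq:perturbation}. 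No genuinely new idea beyond Proposition~\ref{ppsmalltime} is needed; the lemma isolates precisely the ``easy half'' of $\tilde K_0$, leaving the oscillatory part $K_{0,1}$ (where $|\eta|$ may be large) for the subsequent, harder analysis.
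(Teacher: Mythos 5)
Your proposal is correct and follows essentially the same route as the paper: Taylor's formula for $\sinc\tilde\omega$ yielding $|\partial_\eta^\gamma\sinc\tilde\omega|\lesssim|\eta|^{2\sigma-|\gamma|}$ on $|\eta|\leq2$, hence $|\partial_\eta^\gamma\hat K_{0,0}|\lesssim 1+|\eta|^{2\sigma-|\gamma|}$ uniformly in $t\geq1$, and then Lemma~\ref{lem:parts} with $\kappa=n+1$ to get $|K_{0,0}(t,x)|\leq C(1+|x|)^{-n-1}$ (the paper reads off both the $L^1$ and $L^\infty$ bounds from this pointwise decay, rather than treating $L^\infty$ separately via the integrable symbol, but that is immaterial). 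Your extra checks on the uniformity in $t$ of $\tilde\varphi_0$ and of the $O(\eps_0)$ perturbation in $\tilde\omega$ are exactly the points the paper leaves implicit.
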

By Young inequality, Lemma~\ref{lem:K00} implies that
\[ \|K_{0,0}(t,\cdot)\ast f\|_{L^q} \leq C\,\|f\|_{L^p}, \]
for any~$t\geq1$ and~$1\leq p\leq q\leq\infty$.
\begin{proof}
We may follow the proof of Proposition~\ref{ppsmalltime}, but we may avoid the time dependence in the estimate. It is clear that~$|\hat K_{0,0}(t,\cdot)|\leq C$. On the other hand, as we did in~\eqref{eq:Taylorsin}, by Taylor's formula,
\begin{equation}\label{eq:Taylor}
\sinc \tilde\omega = \tilde\omega^{-1}\sin(\tilde\omega) = 1 - \frac1{6}\,\tilde\omega^2\,\int_0^1 (1-\rho)^3\sin(\rho\tilde\omega)\,d\rho,
\end{equation}
we easily derive
\[ \forall\gamma\neq0: \qquad |\partial_\eta^\gamma \sinc(\tilde\omega)| \lesssim |\eta|^{2\sigma-|\gamma|}, \]
for any~$|\eta|\leq 2$. Therefore,
\[ \forall \gamma: \qquad |\partial_\eta^\gamma \hat K_{0,0}(t,\eta)| \lesssim \big( 1+ |\eta|^{2\sigma-|\gamma|}\big). \]
If~$2\sigma>1$, we may now apply the first part of Lemma~\ref{lem:parts} with~$\kappa=n+1$, obtaining~$|K_{0,0}(t,x)|\leq C(1+|x|)^{-n-1}$, and this concludes the proof. If~$2\sigma\in(0,1]$, then we may apply the second part of Lemma~\ref{lem:parts} with~$\kappa=n$, $a=n-2\sigma$ and~$a_1=a+1$, obtaining
\[ |K_{0,0}(t,x)|\leq \begin{cases}
C(1+|x|)^{-n-2\sigma} & \text{if~$2\sigma\in(0,1)$,} \\
C(1+|x|)^{-n-1}\,\log(e+|x|) & \text{if~$2\sigma=1$,}
\end{cases} \]
and this concludes the proof.
\end{proof}
\begin{remark}
We may now study in details the part of the fundamental solution~$K_{0,1}(t,x)$, which is the most interesting one. We remark that $K_{0,1}$ has been localized in frequencies in two steps, i.e., first choosing low frequencies with respect to~$\xi$, and then choosing high frequencies with respect to~$\eta$. This two-steps localization in frequencies corresponds to localize the fundamental solution in the extended phase space, namely, $t^{-\frac1\sigma}\leq \xii\leq \eps_0$, i.e., $1\leq |\eta|\leq t^{\frac1\sigma}\eps_0$.

In this zone of the extended phase space we may employ the strategy used in~\cite{EL} (see also~\cite{Sjo}) to study the damping-free problem, replacing the homogeneity of the equation by an analogous, weaker property for the localized solution of our problem. Roughly speaking, in this zone of the extended phase space, our fundamental solution may be expressed by a scale-invariant term plus reminder terms. This is possible, since we already dropped the diffusive part of the equation, which possesses a different scaling. In some sense, by splitting the kernels of the fundamental solution and by a change of variable, we may ``mimic the homogeneity argument'' employed for the damping-free equation in~\cite{EL}, at least in the most important zone of the extended phase space, that is, $t^{-\frac1\sigma}\leq \xii\leq \eps_0$.
\end{remark}
\begin{proposition}\label{StationaryPhase}
We denote
\[ m(t,\eta)=\hat K_{0,1}(t,\eta)=(1-\chi(\eta))\varphi_0(t^{-\frac{1}{\sigma}}\eta)\sinc(\tilde\omega(t,\eta)). \]
Assume that~$1\leq p\leq q\leq \infty$ verify
\begin{align}
\label{eq:rangeder1th}
\frac{1-\sigma}{p}- \frac1{q}  < \sigma\left( \frac1{n}-\frac1{2}\right), &\qquad \text{if}\quad \frac1{p}+ \frac1{q}\leq 1,  \\
\label{eq:rangeder2th}
\frac1{p}+\frac{\sigma-1}{q}  <\sigma\left( \frac1{n}+\frac1{2}\right), & \qquad \text{if}\quad \frac1{p}+ \frac1{q}\geq 1.
\end{align}
Then $m(t, \cdot)\in M_p^q$ for any~$t\geq1$, and~$\|m(t,\cdot)\|_{M_p^q}\leq C$, that is,
\begin{equation}\label{eq:estK01}
\|K_{0,1}(t,\cdot)\ast f\|_{L^q} \leq C\,\|f\|_{L^p},
\end{equation}
uniformly with respect to~$t\geq1$. Moreover, if equality holds in~\eqref{eq:rangeder1th} or, respectively, \eqref{eq:rangeder2th}, estimate~\eqref{eq:estK01} remains valid with a possible log-loss, that is,
\begin{equation}\label{eq:logK01}
\|K_{0,1}(t,\cdot)\ast f\|_{L^q} \leq C\,\log(e+t)\,\|f\|_{L^p},
\end{equation}
for~$t\geq1$, where~$C>0$ does not depend on~$t$. If the equality holds in~\eqref{eq:rangeder1th} or, respectively, \eqref{eq:rangeder2th}, and~$1<p\leq 2\leq q<\infty$, the log-loss may be avoided, that is, we get again~\eqref{eq:estK01}.
\end{proposition}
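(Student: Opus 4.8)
The plan is to treat $m(t,\cdot)$ as a multiplier of type $(p,q)$ by combining a Hausdorff--Young type argument (for the case $1/p+1/q\le 1$, i.e. $q$ to the "larger" side) together with a genuine $L^p$ multiplier estimate near $p=q=2$, and then interpolate. First I would record the structural facts about $m(t,\eta)=(1-\chi(\eta))\,\tilde\varphi_0(t,\eta)\,\sinc(\tilde\omega(t,\eta))$ already assembled in the excerpt: the support is contained in the annular region $1\le|\eta|\lesssim \eps_0 t^{1/\sigma}$, one has the oscillatory-derivative bounds $|\partial_\eta^\gamma \sinc\tilde\omega|\lesssim |\eta|^{-\sigma+(\sigma-1)|\gamma|}$ in that region, and $\tilde\omega(t,\eta)=|\eta|^\sigma+O(\eps_0)$ together with its derivatives; crucially the Hessian of $|\eta|^\sigma$ is nondegenerate on $|\eta|\approx$ const for $\sigma\neq1$ (this is the role of~\eqref{eq:Hessian} referred to later, and the reason the proof breaks at $\sigma=1$).

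Second, for the range $1/p+1/q\ge1$ (so that $1\le p\le 2\le q$ is the friendly subcase, or more generally $q$ is "past" the conjugate of $p$), I would exploit that $L^p\to L^q$ estimates in this regime can be obtained without a genuine $L^p$ multiplier theory: one uses the dyadic decomposition $m=\sum_{j\ge 0}\psi_j m$ over $2^{j}\approx|\eta|$ with $0\le j\lesssim \log_2(t^{1/\sigma})$, estimates each dyadic piece by stationary phase / the van der Corput-type kernel bounds for $\mathfrak F^{-1}(\psi_j\sinc\tilde\omega)$ — which decay like $|x|^{-n/2}$ times the appropriate powers of $2^j$ because of the nondegenerate Hessian — then apply Young's inequality and Hausdorff--Young (or the $L^2$-based $TT^*$ argument), and sum the resulting geometric-type series in $j$. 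The exponent condition~\eqref{eq:rangeder2th} is precisely what makes that sum over $j$ convergent; equality in~\eqref{eq:rangeder2th} makes it a harmonic-type sum, producing the $\log(e+t)$ loss, and the absence of the loss for $1<p\le2\le q<\infty$ comes from summing in $\ell^2(L^q)$-Besov fashion instead (Littlewood--Paley), which gains the endpoint. For the range $1/p+1/q\le1$ one argues symmetrically, now putting the "bad" side on $p$, using~\eqref{eq:rangeder1th}; duality $M_p^q=M_{q'}^{p'}$ essentially reduces one case to the other.

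Third, to cover the part of the $(p,q)$ square that is genuinely near the diagonal $p=q$ (and away from $2$), where Hausdorff--Young is not available, I would invoke a Mikhlin--Hörmander / Marcinkiewicz-type multiplier theorem on $L^p$ — one of the multiplier theorems collected in~\ref{sec:Appendix} — applied to each dyadic block $\psi_j m$: the derivative bounds show each block has $M_p$-norm $\lesssim 2^{j(\,\cdot\,)}$ with the exponent dictated again by the same numerology, so summing over the finitely many $j\le \tfrac1\sigma\log_2 t$ gives a bound that is $O(1)$ under strict inequality and $O(\log t)$ at equality. Interpolating (Riesz--Thorin / Stein interpolation for the analytic family of multipliers) between these diagonal estimates and the off-diagonal Hausdorff--Young estimates yields the full region~\eqref{eq:rangeder1th}--\eqref{eq:rangeder2th}, and the log-loss statement and its removal for $1<p\le2\le q<\infty$ propagate through interpolation.

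\textbf{Main obstacle.} I expect the hard part to be the \emph{uniformity in $t$}: because the support of $m(t,\cdot)$ grows like $t^{1/\sigma}$, the number of dyadic blocks grows like $\log t$, and each block, after the change of variables, carries a power of its frequency scale. The whole point is to show that the powers conspire — via the stationary-phase decay rate $|x|^{-n/2}$ from the nondegenerate Hessian of $|\eta|^\sigma$ combined with the decay $|\sinc^{(k)}\rho|\lesssim(1+\rho)^{-1}$ — so that the series $\sum_{j} 2^{j\delta}$ has $\delta<0$ exactly under the strict inequalities~\eqref{eq:rangeder1th}/\eqref{eq:rangeder2th}, giving a $t$-independent bound, and $\delta=0$ at equality giving only the $\log$. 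Handling the perturbation $\tilde\omega=|\eta|^\sigma+O(\eps_0)$ uniformly (so that $\eps_0$ small absorbs it into the principal term without spoiling the Hessian lower bound) and making the endpoint $\ell^2$-summation rigorous at equality for $1<p\le 2\le q<\infty$ are the two places where care is needed.
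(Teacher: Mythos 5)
Your plan is correct and follows essentially the same route as the paper's proof: dyadic decomposition of $m(t,\cdot)$ into $\lesssim\log t$ blocks, stationary phase (Littman's lemma with the nondegenerate Hessian of $|\eta|^\sigma$ perturbed by $O(\eps_0)$) for the $M_1^\infty$ block bound, an $L^1$-Fourier-inversion/Bernstein bound for the diagonal, Riesz--Thorin interpolation, geometric summation under the strict inequality with the log-loss at equality, duality to pass between \eqref{eq:rangeder1th} and \eqref{eq:rangeder2th}, and the Besov embeddings $L^p\hookrightarrow B^0_{p,2}$, $B^0_{q,2}\hookrightarrow L^q$ to remove the log for $1<p\leq2\leq q<\infty$. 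The only cosmetic difference is that the paper obtains the diagonal block estimate as an $M_1$ bound via Theorem~\ref{Th: Bernstein} rather than a Mikhlin--H\"ormander argument, which is an equivalent device for compactly supported dyadic blocks.
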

\begin{proof}
We recall that~$(1-\chi(\eta))\varphi_0(t^{-\frac{1}{\sigma}}\eta)$ is supported in~$\{1\leq |\eta|\leq \eps_0t^{\frac1\sigma}\}$. Moreover,
\[ (1-\chi(\eta))\varphi_0(t^{-\frac{1}{\sigma}}\eta)=1, \quad \text{if}\quad 2\leq |\eta|\leq 2^{-1}\eps_0t^{\frac1\sigma}.\]
By using duality arguments, it is sufficient to prove Proposition \ref{StationaryPhase} for  $\frac{1}{p}+\frac{1}{q}\geq 1$.

Now let us consider a dyadic partition of unity~$\{\psi_k\}_{k\in\Z}$ as in Notation~\ref{not:Besov}. Due to~$\supp\psi_k \subset \{2^{k-1}\leq|\eta|\leq2^{k+1}\}$, if we define
\[ k_0=k_0(t)=\max\{k\in\Z: \ 2^k\leq \eps_0 t^{\frac1\sigma} \}, \]
we now see that (for sufficiently large~$t$):
\[ m(t,\eta)\psi_k(\eta) = \begin{cases}
0 & \text{if~$k\leq-1$,}\\
(1-\chi)\,\psi_k\,\sinc\tilde\omega & \text{if~$k=0,1$}\\
\psi_k\,\sinc\tilde\omega & \text{if~$2\leq k\leq k_0-2$,}\\
\tilde\varphi\,\psi_k\,\sinc\tilde\omega & \text{if~$k=k_0-1,k_0,k_0+1$,}\\
0 & \text{if~$k\geq k_0+2$.}
\end{cases} \]
In particular,
\[ m(t,\eta) = \sum_{k=0}^{k_0(t)+1} \psi_k(\eta)\,m(t,\eta), \]
so that
\[ \|m(t,\cdot)\|_{M_p^q} \leq \sum_{k=0}^\infty \|\psi_km(t,\cdot)\|_{M_p^q}, \]
uniformly with respect to~$t$. We immediately obtain
\begin{equation}\label{eq:M2}
\|\psi_km(t,\cdot)\|_{M_2} = \|\psi_km(t,\cdot)\|_{L^\infty} \leq C\max_{2^{k-1}\leq |\eta|\leq 2^{k+1}} |\eta|^{-\sigma} =C 2^{-(k-1)\sigma}=C_1\,2^{-k\sigma}.
\end{equation}
On the other hand, by
\[ \|\partial_\eta^\gamma (\psi_k m(t,\cdot))\|_{L^2}\leq C \Big(\int_{2^{k-1}\leq|\eta|\leq 2^{k+1}} |\eta|^{-2\sigma+2(\sigma-1)|\gamma|}\,d\eta \Big)^{\frac12} \leq C_1\,2^{k(\frac{n}{2}-\sigma+|\gamma|(\sigma-1))} \]
we derive, choosing some~$N>n/2$ (see Theorem~\ref{Th: Bernstein}), the estimate
\begin{equation}\label{eq:M1}
\|\psi_km(t,\cdot)\|_{M_1} \leq \|\psi_km(t,\cdot)\|_{L_2}^{1-\frac{n}{2N}}\,\sum_{|\gamma|=N}\|\partial_\eta^\gamma (\psi_k m(t,\cdot))\|_{L^2}^{\frac{n}{2N}} \leq C_2\,2^{k\sigma(\frac{n}{2}-1)}.
\end{equation}
Let now~$k=2,\ldots,k_0-2$. Since
\[m(t, \eta)\psi_k(\eta)=\left(e^{i\tilde\omega(t,\eta)} - e^{-i\tilde\omega(t,\eta)}\right)\frac{\psi_k(\eta)}{2i\tilde\omega(t,\eta)}\]
replacing $\eta$ by $2^{k}  \eta$ and by using that $\tilde\omega(t,2^{k}\eta)=2^{k\sigma}\tilde\omega(2^{-k\sigma}t,\eta)\approx 2^{k\sigma} |\eta|^\sigma$ and Littman's lemma (Lemma \ref{ThmLittmanlemmapecher}) we conclude
\begin{align}
\nonumber
\Big\|\mathfrak{F}^{-1}_{\eta\rightarrow x}\Big(e^{\pm i\tilde\omega(t,\eta)}\frac{\psi_k(\eta)}{\tilde\omega(t,\eta)}\Big)\Big\|_{L^{\infty}(\R^n)}&=2^{k(n-\sigma)}\Big\|\mathfrak{F}^{-1}_{\eta\rightarrow x}\Big(e^{\pm i2^{k\sigma}\tilde\omega(2^{-k\sigma}t,\eta)}\frac{\psi(\eta)}{\tilde\omega(2^{-k\sigma}t,\eta)}\Big)\Big\|_{L^{\infty}(\R^n)}  \\
\label{eq:littman}
&\leq C 2^{k(n-\sigma)}(1+2^{k\sigma})^{-\frac{n}{2}} \leq C2^{k\left(n-\sigma-\frac{n}2\sigma\right)},
\end{align}
for all $k=2,\ldots,k_0-2$. We used that for $\sigma\neq 1$ the rank of the Hessian $H_{\tilde\omega(2^{-k\sigma}t,\eta)}$ is $n$, and that, for sufficiently small~$\eps_0$, it holds
\begin{equation}\label{eq:Hessian}
|\det H_{\tilde\omega(2^{-k\sigma}t,\eta)}|\geq c_{n,\sigma}>0, \quad \text{uniformly with respect to~$t$.}
\end{equation}
Indeed,
\[ H_{\tilde\omega(2^{-k\sigma}t,\eta)} = H_{|\eta|^\sigma} + \textit{O}(\eps_0), \]
due to~\eqref{eq:perturbation}. We emphasize that it is not possible to extend this approach to the case~$\sigma=1$, due to~$\det H_{|\eta|}=0$, so that we cannot get~\eqref{eq:Hessian}.

By Young's convolution inequality, we get
\begin{equation}\label{eq:M1inf}
\|m\psi_k\|_{M_1^{\infty}}\leq C2^{k(n-\sigma-\frac{n}2\sigma)}.
\end{equation}
The same holds true for~$k=0,1,k_0-1,k_0,k_0+1$, possibly modifying the constant~$C$.

As a consequence of Riesz-Thorin interpolation theorem, by~\eqref{eq:M2} and~\eqref{eq:M1inf}, we get
\begin{equation}\label{eq:Mpconj}
\|m\psi_k\|_{M_{p_0}^{q_0}}\leq C2^{k\Big(-\sigma+(\frac{1}{p_0}-\frac{1}{2})(n(2-\sigma))\Big)}
\end{equation}
for~$p_0$, $q_0$ on the conjugate line, that is, $\frac{1}{p_0}+\frac{1}{q_0}=1$.

Using \eqref{eq:M1}, \eqref{eq:Mpconj} and  Riesz-Thorin interpolation theorem we conclude that
\begin{align*}
 \|m\psi_k\|_{M_{p}^{q}}\leq  C2^{k\Big(-\sigma+(\frac{1}{p_0}-\frac{1}{2})(n(2-\sigma))\Big)(1-\delta)}2^{k\sigma(\frac{n}{2}-1)\delta}
 =C2^{kn\Big(\frac{1}{p}+\frac{\sigma-1}{q}-\big(\frac{\sigma}{2}+\frac{\sigma}{n}\big)\Big)},
\end{align*}
where $0< \delta < 1$, with $\frac{1}{p}=\frac{1-\delta}{p_0}+\delta$ and $\frac{1}{q}=\frac{1-\delta}{q_{0}}+\delta$.

Therefore, we conclude the estimate
\[
\|m\|_{M_{p}^{q}} \leq \sum_{k=0}^{k_0(t)+1}\|m\psi_k\|_{M_{p}^{q}} \leq C\,\sum_{k=0}^\infty 2^{kn\Big(\frac{1}{p}+\frac{\sigma-1}{q}-\big(\frac{\sigma}{2}+\frac{\sigma}{n}\big)\Big)},
\]
uniformly with respect to~$t\geq1$ (since we removed the bound from above~$k_0(t)+1$ on the indexes).

The latter series converges if, and only if, \eqref{eq:rangeder2th} holds. If the equality holds in~\eqref{eq:rangeder2th}, we modify the proof using the definition of~$k_0(t)$ to obtain a dependence on~$t$:
\[
\|m\|_{M_{p}^{q}} \leq \sum_{k=0}^{k_0(t)+1}\|m\psi_k\|_{M_{p}^{q}} \leq C\,(k_0(t)+2) \leq C\big(2+\log_2 t^{\frac1\sigma}\big) \lesssim \log(e+t),
\]
and this concludes the proof.

However, in the special case~$1<p\leq 2\leq q<\infty$, the latter estimate may be refined by using the embeddings for Besov spaces (see, for instance, \cite{ST}): $L^p\hookrightarrow B^0_{p,2}$ for~$p\in(1,2]$ and $B^0_{q,2}\hookrightarrow L^q$ for~$q\in[2,\infty)$. Indeed, since the sum in~\eqref{eq:partition} is finite for any given~$\xi$, in particular, $\#\{k: \ \psi_k(\xi)\neq0\}\leq 3$, we obtain the chain of inequality (see also~\cite{Brenner1})
\[ \|\mathfrak{F}^{-1}(m\hat f)\|_{B^0_{q,2}}\leq C_1\sup_k \|\mathfrak{F}^{-1}(m\psi_k\hat f)\|_{L^q} \leq C_2\,\|f\|_{L^p}\leq C_3 \,\|f\|_{B^0_{p,2}}, \]
and this concludes the proof.
\end{proof}
\begin{remark}\label{rem:alternative}
We notice that $1/p+1/q\geq1$, together with~$p\leq q$, is equivalent to ask that~$p\leq2$ and~$p\leq q\leq p'$, whereas $1/p+1/q\leq1$, together with~$p\leq q$, is equivalent to ask that~$q\geq2$ and~$q'\leq p\leq q$. Here by~$p'$ we denote the H\"older conjugate of~$p$, i.e., $1/p+1/p'=1$.

Then we may rewrite~\eqref{eq:rangeder1th} as
\begin{align}
\label{eq:range1equiv}
\frac{n}\sigma\left(\frac1p-\frac1q\right) + n\left(\frac12-\frac1p\right)<1, &\qquad  \text{if~$q\in[2,\infty]$ and~$p\in[q',q]$,}
\intertext{and we may rewrite~\eqref{eq:rangeder2th} as}
\label{eq:range2equiv}
\frac{n}\sigma\left(\frac1p-\frac1q\right) + n\left(\frac1q-\frac12\right)<1, &\qquad  \text{if~$p\in[1,2]$ and~$q\in[p,p']$.}
\end{align}
Noticing that
\[ q'\leq p \iff \frac12-\frac1p \geq \frac1q-\frac12, \]
and
\[ q\leq p' \iff \frac1q-\frac12 \geq \frac12-\frac1p, \]
we may rewrite both~\eqref{eq:range1equiv} and~\eqref{eq:range2equiv} as~\eqref{eq:rangeboth}.
\end{remark}
As a consequence of Proposition~\ref{ppsmalltime}, Lemma~\ref{lem:K00} and Proposition~\ref{StationaryPhase}, we have proved the following.
\begin{proposition}\label{main}
Let $\sigma>1$. Assume that~$1\leq p\leq q\leq \infty$ verify~\eqref{eq:rangeboth}. Then we have the following $L^p-L^q$ estimate
\begin{eqnarray}\label{linearestimates}
\| K_0(t,\cdot)\ast u_1\|_{L^q} \lesssim\, (1+t)^{1-\frac{n}{\sigma}\left( \frac1{p}- \frac1{q}\right)}\,\|u_1||_{L^p}, \qquad  \forall t\geq 0.
\end{eqnarray}
Moreover, if equality holds in~\eqref{eq:rangeboth}, estimate~\eqref{linearestimates} remains valid with a possible log-loss, that is,
\begin{equation}\label{eq:logK0}
\|K_0(t,\cdot)\ast u_1\|_{L^q} \leq C\,(1+t)^{1-\frac{n}{\sigma}\left( \frac1{p}- \frac1{q}\right)}\,\log(e+t)\,\|u_1\|_{L^p},\qquad  \forall t\geq 0,
\end{equation}
or, equivalently,
\begin{equation}
\label{eq:logK0equiv}
\|K_0(t,\cdot)\ast u_1\|_{L^q}\leq C\,(1+t)^{n\max\left\{\left(\frac12-\frac1p\right),\left(\frac1q-\frac12\right)\right\}}\,\log(e+t)\,\|u_1\|_{L^p},\qquad  \forall t\geq 0.
\end{equation}
If the equality holds in~\eqref{eq:rangeboth}, and~$1<p\leq 2\leq q<\infty$, the log-loss may be avoided, that is, we get again~\eqref{linearestimates}.
\end{proposition}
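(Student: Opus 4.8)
The plan is to obtain Proposition~\ref{main} purely by assembling the three preceding results, since all the analytic work has already been carried out in Lemma~\ref{lem:K00} and Proposition~\ref{StationaryPhase}; no new estimate is needed. First I would dispose of the small-time regime: for~$t\in[0,1]$, Proposition~\ref{ppsmalltime} (applied with~$T=1$) gives $\|K_0(t,\cdot)\ast u_1\|_{L^q}\leq C\,\|u_1\|_{L^p}$ for all~$1\leq p\leq q\leq\infty$, and since the weight~$(1+t)^{1-\frac n\sigma(\frac1p-\frac1q)}$ stays between two positive constants on~$[0,1]$, both~\eqref{linearestimates} and~\eqref{eq:logK0} already hold there. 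It therefore suffices to prove the decay estimate for~$t\geq1$, where~$(1+t)\approx t$.

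For~$t\geq1$ I would start from the factorization~\eqref{split}, which uses that~$\|\mathscr{F}^{-1}(e^{-t\xii^{2\theta}/2})\|_{L^1}$ is a constant independent of~$t$, together with the rescaling identity~$\eta=t^{1/\sigma}\xi$, to write
\[ \|K_0(t,\cdot)\|_{L^q_p} \leq C\,t\,\big\|\mathscr{F}^{-1}(\sinc(\omega t)\varphi_0)\big\|_{L^q_p} = C\,t^{\,1-\frac n\sigma\left(\frac1p-\frac1q\right)}\,\|\tilde K_0(t,\cdot)\|_{L^q_p}. \]
It then remains only to bound~$\|\tilde K_0(t,\cdot)\|_{L^q_p}$ by a constant uniform in~$t\geq1$ (respectively by~$C\log(e+t)$ in the limiting case). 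Splitting~$\tilde K_0=K_{0,0}+K_{0,1}$: Lemma~\ref{lem:K00} and Young's inequality give~$\|K_{0,0}(t,\cdot)\|_{L^q_p}\leq C$ for every~$1\leq p\leq q\leq\infty$ and every~$t\geq1$; and Proposition~\ref{StationaryPhase} gives~$\|K_{0,1}(t,\cdot)\|_{L^q_p}\leq C$ under conditions~\eqref{eq:rangeder1th}--\eqref{eq:rangeder2th}, the log-loss~\eqref{eq:logK01} when the corresponding equality holds, and no loss in the subrange~$1<p\leq2\leq q<\infty$.

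The only remaining point is matching the exponent ranges: I would invoke Remark~\ref{rem:alternative} to check that, for~$p\leq q$, the hypothesis~\eqref{eq:rangeboth} is exactly equivalent to~\eqref{eq:rangeder1th} when~$1/p+1/q\leq1$ and to~\eqref{eq:rangeder2th} when~$1/p+1/q\geq1$, with equality in~\eqref{eq:rangeboth} corresponding to equality in the relevant one of the two. Adding the two bounds yields~$\|\tilde K_0(t,\cdot)\|_{L^q_p}\leq C$ (resp.~$\leq C\log(e+t)$), hence~\eqref{linearestimates} (resp.~\eqref{eq:logK0}) for~$t\geq1$, which combined with the first step gives them for all~$t\geq0$. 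Finally, the equivalent form~\eqref{eq:logK0equiv} and the fact that the log-loss disappears when~$1<p\leq2\leq q<\infty$ follow because equality in~\eqref{eq:rangeboth} means precisely that $1-\frac n\sigma(\frac1p-\frac1q)=n\max\{(\frac12-\frac1p),(\frac1q-\frac12)\}$, and from the corresponding refinement in Proposition~\ref{StationaryPhase}.

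As for the main obstacle: there is essentially none, because the proof is a straightforward synthesis of results already established; the only things to be careful about are the clerical task of translating~\eqref{eq:rangeboth} into the form needed to apply Proposition~\ref{StationaryPhase}, correctly propagating the logarithmic factor through the rescaling, and using~$(1+t)$ rather than~$t$ so that the (possibly negative) power of~$t$ does not blow up near~$t=0$ --- which is exactly what Proposition~\ref{ppsmalltime} is there to absorb.
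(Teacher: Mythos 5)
Your proposal is correct and is essentially identical to the paper's argument: the paper gives no separate proof, stating only that Proposition~\ref{main} follows ``as a consequence of Proposition~\ref{ppsmalltime}, Lemma~\ref{lem:K00} and Proposition~\ref{StationaryPhase},'' which is exactly the assembly you carry out (small-time bound, factorization~\eqref{split}, rescaling, splitting $\tilde K_0=K_{0,0}+K_{0,1}$, and Remark~\ref{rem:alternative} to match the exponent conditions). Nothing is missing.
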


\section{$L^p-L^q$ low frequencies estimates ($\xii\leq\eps_0$) for derivatives of the solution}\label{sec:lowder}

The extension of Proposition~\ref{main} to include classical derivatives~$\partial_x^\beta$ of the solution, fractional derivatives~$(-\Delta)^{\frac{b}2}$ and time derivatives~$\partial_t^\ell$, is pretty much straightforward, so we postponed this analysis here, for the ease of reading.

The extension of Proposition~\ref{ppsmalltime} requires a few minor modifications in the proof.
\begin{proposition}\label{ppsmalltimefract}
Let $T\geq1$. Then, for any $t\in[0,T]$ and $1\leq p\leq q\leq \infty$, it holds
\[
\norm{\partial_t^\ell\partial_x^\beta (-\Delta)^{\frac{b}2} K_0(t,\cdot)\ast u_1}_{L^q}\leq C(T) \norm{u_1}_{L^p},
\]
for any~$\ell\in\N$, $\beta\in\N^n$, $b\geq0$,for some~$C(T)$, independent of~$u_1$.
\end{proposition}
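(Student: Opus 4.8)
The plan is to adapt the argument used to prove Proposition~\ref{ppsmalltime}, simply carrying along the extra symbol~$(i\xi)^\beta\,\xii^b\,(\partial_t^\ell)$ through the computation. Write
\[
\partial_t^\ell\partial_x^\beta (-\Delta)^{\frac{b}2}K_0(t,\cdot) = \mathfrak{F}^{-1}\big((i\xi)^\beta\,\xii^b\,\varphi_0(\xi)\,\partial_t^\ell\hat K(t,\xi)\big),
\]
and recall that for~$\xii\leq\eps_0$ we have~$\hat K(t,\xi)=t\,e^{-t\xii^{2\theta}/2}\,\sinc(t\omega)$, with~$\omega=\xii^\sigma\sqrt{1-\xii^{4\theta-2\sigma}/4}$. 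As in Proposition~\ref{ppsmalltime}, it is enough to show that this kernel lies in~$L^1\cap L^\infty$ with norm bounded by some~$C(T)$, uniformly for~$t\in[0,T]$, and then apply Young's inequality for all pairs~$1\leq p\leq q\leq\infty$.

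First I would handle the time derivatives. Since~$\partial_t$ acting on~$t\,e^{-t\xii^{2\theta}/2}\,\sinc(t\omega)$ produces only polynomial-in-$t$ factors (bounded by~$C(T)$ on~$[0,T]$), powers of~$\xii^{2\theta}$, and derivatives of~$\sinc$ evaluated at~$t\omega$, and since~$|\sinc^{(k)}\rho|\leq C(1+\rho)^{-1}$, one gets, using the Taylor expansion~\eqref{eq:Taylorsin} of~$\omega^{-1}\sin(t\omega)$ as in Proposition~\ref{ppsmalltime}, the pointwise bounds
\[
\forall\gamma:\qquad \big|\partial_\xi^\gamma\big((i\xi)^\beta\xii^b\varphi_0(\xi)\,\partial_t^\ell\hat K(t,\xi)\big)\big| \lesssim C(T)\,(1+\xii^{2\theta-|\gamma|}),
\]
exactly the same estimate obtained there for~$\partial_\xi^\gamma(\varphi_0\hat K)$, since the multiplication by~$(i\xi)^\beta\xii^b\varphi_0$ — a function smooth on the support of~$\varphi_0$ with all derivatives bounded, possibly vanishing to order~$|\beta|$ at the origin — does not worsen the singular behaviour near~$\xi=0$ (it can only improve it). The point~$\xi=0$ is the only place where~$\xii^b$ or~$\xii^{2\theta-|\gamma|}$ could be singular, and there one uses the vanishing of~$\varphi_0$ away from the origin together with the fact that the worst singularity is~$\xii^{2\theta-|\gamma|}$.

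Then I would conclude exactly as in Proposition~\ref{ppsmalltime}: if~$2\theta>1$, apply the first part of Lemma~\ref{lem:parts} with~$\kappa=n+1$ to get~$|\partial_t^\ell\partial_x^\beta(-\Delta)^{\frac b2}K_0(t,x)|\leq C(T)(1+|x|)^{-n-1}$; if~$2\theta\in(0,1]$, apply the second part with~$\kappa=n$, $a=n-2\theta$, $a_1=a+1$, obtaining the decay~$(1+|x|)^{-n-2\theta}$ (or~$(1+|x|)^{-n-1}\log(e+|x|)$ when~$2\theta=1$); in all cases the kernel is in~$L^1\cap L^\infty$ with~$C(T)$-bounded norm. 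The only genuinely new point to check — and the mildest possible obstacle — is bookkeeping: making sure that differentiating~$\partial_t^\ell$ does not introduce negative powers of~$t$ (it does not, only nonnegative powers, bounded on~$[0,T]$) and that the prefactor~$(i\xi)^\beta\xii^b$ is harmless, which follows because its contribution to~$\partial_\xi^\gamma$ via Leibniz is a sum of terms~$\partial_\xi^{\gamma_1}((i\xi)^\beta\xii^b\varphi_0)\,\partial_\xi^{\gamma_2}(\partial_t^\ell\hat K)$ with~$|\gamma_1|+|\gamma_2|=|\gamma|$, and on the support of~$\varphi_0$ one has~$|\partial_\xi^{\gamma_1}((i\xi)^\beta\xii^b\varphi_0)|\lesssim 1+\xii^{-|\gamma_1|}$, so the product is~$\lesssim C(T)(1+\xii^{-|\gamma_1|})(1+\xii^{2\theta-|\gamma_2|})\lesssim C(T)(1+\xii^{2\theta-|\gamma|})$, recovering the same bound. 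No real difficulty arises, which is precisely why the authors postponed this routine extension.
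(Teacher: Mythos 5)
Your treatment of the time derivatives and of the factor $(i\xi)^\beta$ is fine and matches the paper's, but there is a genuine gap in the case $b>0$: the function $\xii^b$ is \emph{not} smooth on the support of $\varphi_0$ (which contains the origin) unless $b$ is an even integer, and its derivatives of order $|\gamma_1|>b$ blow up like $\xii^{b-|\gamma_1|}$ as $\xi\to0$. Consequently your claimed symbol estimate
\[
\big|\partial_\xi^\gamma\big((i\xi)^\beta\xii^b\varphi_0\,\partial_t^\ell\hat K\big)\big|\lesssim C(T)\,(1+\xii^{2\theta-|\gamma|})
\]
is false whenever $|\beta|+b<2\theta$: the Leibniz term carrying all derivatives on $(i\xi)^\beta\xii^b$ behaves like $\xii^{|\beta|+b-|\gamma|}$, which is strictly more singular than $1+\xii^{2\theta-|\gamma|}$ for $|\beta|+b<|\gamma|\leq 2\theta$ (and also for $|\gamma|>2\theta$). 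The concluding inequality in your bookkeeping, $(1+\xii^{-|\gamma_1|})(1+\xii^{2\theta-|\gamma_2|})\lesssim 1+\xii^{2\theta-|\gamma|}$, is already arithmetically wrong: take $\gamma_2=0$ and $\gamma_1=\gamma$ to get a left-hand side $\approx\xii^{-|\gamma|}$ against a right-hand side that is bounded when $|\gamma|\leq2\theta$. With the wrong symbol bound, the subsequent application of Lemma~\ref{lem:parts} with the parameters $\kappa=n+1$ (resp.\ $a=n-2\theta$) is not justified, since its hypothesis $|\partial^\alpha f|\leq C\xii^{-a}$ with $a<n$ would fail for $|\alpha|=n+1$ when $b\leq1$.

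The conclusion is nevertheless true, and the paper repairs exactly this point by splitting off the case $b>0$: for $q\in(1,\infty)$ it is immediate from $\|(-\Delta)^{\frac b2}g\|_{L^q}\leq\|g\|_{W^{m,q}}$, while for $q=1$ or $q=\infty$ one abandons the Taylor expansion, uses the (correct, weaker) bound $|\partial_\xi^\gamma f(t,\xi)|\lesssim C(T)\,\xii^{b-|\gamma|}$, and applies Lemma~\ref{lem:parts} with $b$-dependent parameters ($\kappa=n+1$, $a=n+1-b$ if $b>1$; $\kappa=n$, $a=n-b$, $a_1=a+1$ if $b\in(0,1]$), obtaining the pointwise decay $(1+|x|)^{-n-\min\{b,1\}}$ (with a logarithm if $b=1$), which is still enough for $K_0(t,\cdot)\in L^1\cap L^\infty$ and hence for Young's inequality. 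You should add this case distinction; your argument as written only covers $b=0$ (and even integer $b$).
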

\begin{proof}
As in the proof of Proposition~\ref{ppsmalltime}, it is sufficient to apply Lemma~\ref{lem:parts} to
\[ f(t,\xi)=(i\xi)^\beta|\xi|^{b}\varphi_0 \partial_t^\ell\hat K(t,\cdot). \]
We notice that
\[ \partial_t^\ell\hat K_0(t,\cdot) = \varphi_0 \sum_{j=0}^\ell\, \binom{\ell}{j}\, \omega^{j-1}\sin^{(j)}(t\omega) \partial_t^{\ell-j} (e^{-t\xii^{2\theta}/2}), \]
where~$\sin^{(j)}(t\omega)=(-1)^{j/2}\sin(t\omega)$ for~$j$ even and $\sin^{(j)}(t\omega)=(-1)^{(j-1)/2}\cos(t\omega)$ for~$j$ odd.

It is clear that~$|f(t,\xi)|\leq C$. To estimate its derivatives, we now consider two cases. First, let~$b=0$.

By virtue of~\eqref{eq:Taylorsin}, and its analogous for the cosine function,
\begin{equation}
\label{eq:Taylorcos}
\cos(t\omega) = 1 - \frac1{2}\,t^2\omega^2\,\int_0^1 (1-\rho)^2\cos(\rho\omega t)\,d\rho,
\end{equation}
we easily derive
\[ \forall\gamma\neq0: \qquad |\partial_\xi^\gamma (\omega^{j-1}\sin^{(j)}(t\omega))| \lesssim (1+t^3)\,\xii^{2\sigma-|\gamma|}, \]
for any~$\xii\leq \eps_0$. On the other hand,
\[ \forall\gamma\neq0: \qquad |\partial_\xi^\gamma \partial_t^{\ell-j} (e^{-t\xii^{2\theta}/2})| \lesssim t\,\xii^{2\theta-|\gamma|}. \]
Therefore, as in the proof of Proposition~\ref{ppsmalltime}, we get (here we use that~$(i\xi)^\beta$ is smooth)
\[ \forall \gamma: \qquad \big|\partial_\xi^\gamma f(t,\xi)\big| \lesssim C(T)\,\big( 1+ \xii^{2\theta-|\gamma|}\big), \]
and so we conclude the proof applying Lemma~\ref{lem:parts}.

Now let~$b>0$. The proof is trivial if~$q\in(1,\infty)$, due to
\[ \|(-\Delta)^{\frac{b}2}g\|_{L^q} \leq \|g\|_{W^{m,q}}, \]
for any~$m\geq b$. However, if~$q=1$ or~$q=\infty$, the estimate above may fail, in general. Therefore, we modify the proof, to take into account of possibly fractional values of~$b$.

Now the use of Taylor formula to get~\eqref{eq:Taylorsin} and~\eqref{eq:Taylorcos} is no longer helpful, since~$\xii^b$ is not a smooth function. However, it is clear that
\[ \forall \gamma: \qquad |\partial_\xi^\gamma f(t,\xi)| \lesssim C(T)\,\xii^{b-|\gamma|}. \]
If~$b>1$, we apply the first Lemma~\ref{lem:parts} with~$\kappa=n+1$ and~$a=n+1-b$, obtaining~$|g(t,\xi)|\leq C(T)\,(1+|x|)^{-n-1}$. If~$b\in(0,1]$, we apply the second part of Lemma~\ref{lem:parts} with~$\kappa=n$ and~$a=n-b$ and~$a_1=a+1$, obtaining
\[ |g(t,\xi)|\leq \begin{cases}
C\,(1+|x|)^{-n-b} & \text{if~$b\in(0,1)$,} \\
C\,(1+|x|)^{-n-1}\,\log(e+|x|) & \text{if~$b=1$,}
\end{cases} \]
and concluding the proof.
\end{proof}
We now replace~\eqref{split} by
\begin{equation}\label{eq:splitder}
\|\partial_x^\beta (-\Delta)^{\frac{b}2}\partial_t^\ell K_0(t,\cdot)\|_{L^q_p}\leq Ct\,\|\partial_x^\beta (-\Delta)^{\frac{b}2}\partial_t^\ell \mathscr{F}^{-1}(\sinc(\omega t)\varphi_0)\|_{L^q_p}.
\end{equation}
Again, we are now legitimated to perform the change of variable $\eta=t^{\frac{1}{\sigma}}\xi$, for~$t\geq1$, which gives
\begin{equation}
\| \partial_x^\beta (-\Delta)^{\frac{b}2}\partial_t^\ell \mathscr{F}^{-1}(\sinc(\omega t)\varphi_0)\|_{L^q_p}= t^{-\frac{n}{\sigma}(\frac{1}{p}-\frac{1}{q})-\frac{|\beta|+b}\sigma-\ell}
\|  \partial_x^\beta (-\Delta)^{\frac{b}2}\partial_t^\ell \tilde K_0(t,\cdot)\|_{L^q_p}.
\end{equation}
As we did for Proposition~\ref{ppsmalltimefract}, we may extend Lemma~\ref{lem:K00} to cover the case of derivatives and fractional derivatives. To extend Proposition~\ref{StationaryPhase}, we shall only take into account of the influence of the derivatives, which leads to obtain an additional power $2^{k(|\beta|+b+\sigma\ell)}$ in all estimates for~$\psi_k(\eta)m(t,\eta)$. In turn, we obtain
\[
\|m\|_{M_{p}^{q}} \leq C\,\sum_{k=0}^{k_0(t)+1} 2^{kn\Big(\frac{1}{p}+\frac{\sigma-1}{q}-\big(\frac{\sigma}{2}+\frac{\sigma}{n}\big)\Big)+k(|\beta|+b+\sigma\ell)}.
\]
As in the proof of Proposition~\ref{StationaryPhase}, the sum is bounded by a constant~$C$, uniformly with respect to~$t\geq1$, if we assume
\begin{align}
\label{eq:rangeder2}
\frac1{p}+\frac{\sigma-1}{q} + \frac{|\beta|+b}{n} <\sigma\left( \frac{1-\ell}{n}+\frac1{2}\right), & \qquad \text{if} \quad \frac1{p}+ \frac1{q}\geq 1,
\intertext{whereas a log-loss appears if we take the equality in~\eqref{eq:rangeder2}. Its dual condition is}
\label{eq:rangeder1}
\frac{1-\sigma}{p}- \frac1{q} + \frac{|\beta|+b}{n} < \sigma\left( \frac{1-\ell}{n}-\frac1{2}\right), &\qquad  \text{if}\quad \frac1{p}+ \frac1{q}\leq 1.
\end{align}
As we did in Remark~\ref{rem:alternative}, we may write~\eqref{eq:rangeder2} and~\eqref{eq:rangeder1} as a unique condition (see~\eqref{eq:rangederboth}).

Hence, we obtain the following generalization of Proposition~\ref{main}.
\begin{theorem}\label{thm:lowder}
Let $\sigma>1$, $\ell\in\N$, $\beta\in\N^n$ and $b\geq0$. Assume that~$1\leq p\leq q\leq \infty$ and that
\begin{equation}\label{eq:rangederboth}
\frac{n}\sigma\left(\frac1p-\frac1q\right)+n\max\left\{\left(\frac12-\frac1p\right),\left(\frac1q-\frac12\right)\right\} +\frac{|\beta|+b}{\sigma} <1-\ell.
\end{equation}
Then we have the following $L^p-L^q$ estimate
\begin{eqnarray}\label{linearenergyestimates}
\| \partial_x^\beta (-\Delta)^{\frac{b}2}\partial_t^\ell  K_0(t,\cdot)\ast u_1\|_{L^q} \lesssim\, (1+t)^{1-\frac{n}{\sigma}\left( \frac1{p}- \frac1{q}\right)-\frac{|\beta|+b}{\sigma}-\ell}\,\|u_1\|_{L^p}, \qquad  \forall t\geq 0.
\end{eqnarray}
Moreover, if equality holds in~\eqref{eq:rangederboth}, estimate~\eqref{linearenergyestimates} remains valid with a possible log-loss, that is,
\begin{equation}\label{eq:logK0der}
\| \partial_x^\beta (-\Delta)^{\frac{b}2}\partial_t^\ell  K_0(t,\cdot)\ast u_1\|_{L^q} \lesssim\, (1+t)^{n\max\left\{\left(\frac12-\frac1p\right),\left(\frac1q-\frac12\right)\right\}}\,\log(e+t)\,\|u_1\|_{L^p}, \qquad  \forall t\geq 0.
\end{equation}
If the equality holds in~\eqref{eq:rangederboth}, and~$1<p\leq 2\leq q<\infty$, the log-loss may be avoided, that is, we get again~\eqref{linearenergyestimates}.
\end{theorem}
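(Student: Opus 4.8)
The plan is to reproduce, now in the presence of the operator $\partial_x^\beta(-\Delta)^{b/2}\partial_t^\ell$, the same three ingredients that produced Proposition~\ref{main} --- the small-time bound, the $K_{0,0}$ bound, and the $K_{0,1}$ bound --- keeping careful track of the extra powers of frequency created by differentiation. First I would invoke Proposition~\ref{ppsmalltimefract}, which already gives the estimate on every bounded interval $[0,T]$, so that it suffices to treat $t\geq1$. Then I would use the splitting set up in~\eqref{eq:splitder} and the lines following it: pull out the diffusive factor $\mathfrak{F}^{-1}(e^{-t|\xi|^{2\theta}/2})$, whose $L^1$ norm is bounded uniformly in $t$, perform the change of variable $\eta=t^{1/\sigma}\xi$, and cut with $\chi$ into $K_{0,0}$ (frequencies $|\eta|\leq2$) and $K_{0,1}$ (frequencies $|\eta|\geq1$, automatically supported in $|\eta|\lesssim\eps_0 t^{1/\sigma}$). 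The change of variable carries the scaling factor $t^{-\frac n\sigma(1/p-1/q)-\frac{|\beta|+b}\sigma-\ell}$ --- the $t^{-\ell}$ coming from the identity $\partial_t^\ell\sinc(\omega t)=\omega^\ell\sinc^{(\ell)}(\omega t)$ together with $\omega\approx t^{-1}|\eta|^\sigma$ --- which, combined with the explicit $t$ in~\eqref{eq:splitder}, is exactly the decay rate asserted in~\eqref{linearenergyestimates}.

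For $K_{0,0}$ I would repeat the argument of Lemma~\ref{lem:K00}, applied to the symbol $(i\eta)^\beta|\eta|^b$ times $\partial_t^\ell$ of the rescaled $\sinc$-kernel. When $q\in(1,\infty)$ this is immediate from $\|(-\Delta)^{b/2}g\|_{L^q}\lesssim\|g\|_{W^{m,q}}$ with $m\geq b$; when $q\in\{1,\infty\}$ and $b$ is fractional one instead reruns the $b>0$ step of the proof of Proposition~\ref{ppsmalltimefract}, using that on $\{|\eta|\leq2\}$ the symbol obeys $|\partial_\eta^\gamma(\cdot)|\lesssim|\eta|^{b-|\gamma|}$ uniformly in $t\geq1$ (the $\partial_t$-derivatives being handled by Taylor's formula as in~\eqref{eq:Taylorsin}--\eqref{eq:Taylorcos}) and applying Lemma~\ref{lem:parts} with the shifted parameters $a=n-b$, $a_1=a+1$, or $a=n+1-b$ if $b>1$. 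This yields $\|K_{0,0}(t,\cdot)\ast f\|_{L^q}\lesssim\|f\|_{L^p}$, uniformly in $t\geq1$.

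For $K_{0,1}$ I would repeat the proof of Proposition~\ref{StationaryPhase}. On each dyadic block $\psi_k m(t,\cdot)$ with $2^k\lesssim\eps_0 t^{1/\sigma}$, differentiation multiplies the symbol by a factor bounded by $2^{k(|\beta|+b)}$ coming from $(i\eta)^\beta|\eta|^b$ and by $2^{k\sigma\ell}$ coming from $\partial_t^\ell$: each $\partial_t$ on the oscillating factor brings down a power of $\omega\sim t^{-1}|\eta|^\sigma$, while the terms in which $\partial_t$ instead falls on the diffusive exponential or on the $t$ hidden inside $\tilde\omega$ each gain a factor $t^{-1}$ and are therefore harmless, using $2\theta>\sigma$ together with the support bound $|\eta|\lesssim\eps_0 t^{1/\sigma}$. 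Hence~\eqref{eq:M2},~\eqref{eq:M1} and~\eqref{eq:M1inf} all pick up the same extra factor $2^{k(|\beta|+b+\sigma\ell)}$, and the combination of the $L^2$-estimate, Littman's lemma and Riesz--Thorin interpolation of Proposition~\ref{StationaryPhase} gives $\|\psi_k m(t,\cdot)\|_{M_p^q}\lesssim 2^{kn(\frac1p+\frac{\sigma-1}q-\frac\sigma2-\frac\sigma n)+k(|\beta|+b+\sigma\ell)}$ for $1/p+1/q\geq1$, and by duality for $1/p+1/q\leq1$. Summing over $0\leq k\leq k_0(t)+1$ is uniform in $t$ exactly when~\eqref{eq:rangeder2}, respectively~\eqref{eq:rangeder1}, holds; at equality the sum contributes $k_0(t)+2\lesssim\log(e+t)$, which is removable when $1<p\leq2\leq q<\infty$ by the Besov embeddings $L^p\hookrightarrow B^0_{p,2}$ and $B^0_{q,2}\hookrightarrow L^q$, as in Proposition~\ref{StationaryPhase}. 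Rewriting~\eqref{eq:rangeder2} and~\eqref{eq:rangeder1} as the single condition~\eqref{eq:rangederboth} (verbatim the computation of Remark~\ref{rem:alternative}) and undoing the change of variable then produces~\eqref{linearenergyestimates} and the log-loss variant~\eqref{eq:logK0der}, which together with Proposition~\ref{ppsmalltimefract} finishes the proof.

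The main obstacle --- indeed the only genuinely new bookkeeping compared with the case $\beta=b=\ell=0$ --- is to confirm that $\partial_t^\ell$ applied to the full rescaled kernel (the oscillating factor, the diffusive exponential, and the $t$ buried inside $\tilde\omega$) never produces a frequency weight worse than $2^{k\sigma\ell}$ and, correspondingly, never worsens the power of $t$ beyond $t^{-\ell}$. This is precisely where the noneffectiveness hypothesis $2\theta>\sigma$ and the localization $|\eta|\lesssim\eps_0 t^{1/\sigma}$ come in; once this is in place, the rest is a routine repetition of the arguments of Section~\ref{sec:low}.
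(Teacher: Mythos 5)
Your proposal is correct and follows essentially the same route as the paper: Proposition~\ref{ppsmalltimefract} for bounded times, the splitting~\eqref{eq:splitder} with the rescaling $\eta=t^{1/\sigma}\xi$, the extension of Lemma~\ref{lem:K00} via Lemma~\ref{lem:parts}, and the rerun of Proposition~\ref{StationaryPhase} with the extra dyadic factor $2^{k(|\beta|+b+\sigma\ell)}$, summed under~\eqref{eq:rangeder1}--\eqref{eq:rangeder2} and recombined as~\eqref{eq:rangederboth}. Your explicit verification that the time derivatives falling on the diffusive exponential and on the $t$ inside $\tilde\omega$ are harmless (using $2\theta>\sigma$ and the support $|\eta|\lesssim\eps_0 t^{1/\sigma}$) is a point the paper leaves implicit, and is handled correctly.
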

\begin{remark}
We have a special interest into obtain $L^p-L^q$ estimates at the ``energy level'', that is, for~$(-\Delta)^{\frac\sigma2}u$ and for~$u_t$. Setting~$\beta=0$ and~$(b,\ell)=(\sigma,0),(0,1)$, we obtain the $L^p-L^q$ estimate
\begin{eqnarray}\label{linearenergy}
\|(-\Delta)^{\frac{\sigma}2} K_0(t,\cdot)\ast u_1\|_{L^q}+ \|\partial_t K_0(t,\cdot)\ast u_1\|_{L^q}\lesssim\, (1+t)^{-\frac{n}{\sigma}\left( \frac1{p}- \frac1{q}\right)}\,\|u_1\|_{L^p(\mathbb{R}^n)}, \qquad  \forall t\geq 0,
\end{eqnarray}
provided that
\begin{equation}
\label{eq:rangeENboth}
\frac1p-\frac1q <\sigma\,\min\left\{\left(\frac1p-\frac12\right),\left(\frac12-\frac1q\right)\right\}.
\end{equation}
A log loss appears if we take the equality in~\eqref{eq:rangeENboth}, unless~$1<p\leq 2\leq q<\infty$.

We immediately see that condition~\eqref{eq:rangeENboth} is not satisfied if~$\sigma\leq2$. Condition~\eqref{eq:rangeENboth} is verified on the conjugate line~$1/p+1/q=1$, for any~$\sigma>2$, exception given for~$p=q=2$. Moreover, away from the conjugate line~$1/p+1/q=1$, condition~\eqref{eq:rangeENboth} may only be satisfied if~$p<2<q$.
\end{remark}

\section{What happens if we do not split the diffusive part and the oscillating part}\label{sec:whatif}

In order to show the efficiency of the estimates obtained in Proposition~\ref{main} and Theorem~\ref{thm:lowder}, we compare our decay estimates with the result obtained by estimating the fundamental solution~$K_0(t,x)$ in low-frequencies solution~$K_0(t,\cdot)\ast u_1$, without isolating the diffusive and oscillating part.

For the sake of brevity, we only consider the easier case~$1\leq p\leq 2\leq q\leq\infty$. Let~$\ell\in\N$ and~$\beta\in\N^n$, $b\geq0$ be such that
\begin{equation}\label{eq:notsingbasic}
\text{either~$\ell\geq1$, or}\ |\beta|+b+n\left(\frac1p-\frac1q\right)>\sigma,\ \text{if~$p<2<q$, and~$|\beta|+b\geq\sigma$ if~$p=q=2$.}
\end{equation}
Then,
\begin{equation}\label{eq:classical}
\| \partial_x^\beta (-\Delta)^{\frac{b}2}\partial_t^\ell K_0(t,\cdot)\ast u_1\|_{L^q} \leq C\,(1+t)^{\frac\sigma{2\theta}-\frac{n}{2\theta}\left(\frac1p-\frac1q\right)-\frac{|\beta|+b}{2\theta}-\frac\sigma{2\theta}\,\ell}\,\|u_1\|_{L^p}.
\end{equation}
Indeed, by Haussdorff-Young inequality and H\"older inequality, setting
\[ \frac1r = \frac1{q'}-\frac1{p'}=\frac1p-\frac1q, \]
one may estimate, for~$t\geq1$,
\begin{align*}
\| \partial_x^\beta (-\Delta)^{\frac{b}2}\partial_t^\ell K_0(t,\cdot)\ast u_1\|_{L^q}
    & \lesssim \| (i\xi)^\beta \xii^b \partial_t^\ell  \hat K_0(t,\cdot)\hat u_1\|_{L^{q'}} \lesssim \| (i\xi)^\beta \xii^b \partial_t^\ell  \hat K_0(t,\cdot)\|_{L^r}\|\hat u_1\|_{L^{p'}} \\
    & \lesssim \| \xii^{|\beta|+b+(\ell-1)\sigma}\,e^{-t\xii^{2\theta}}\|_{L^r}\,\|u_1\|_{L^p} = C\,t^{-\frac{|\beta|+b+(\ell-1)\sigma+n/r}{2\theta}} \|u_1\|_{L^p}\,,
\end{align*}
so that we obtain~\eqref{eq:classical}.

The decay rate obtained in~\eqref{eq:classical} is worse than the one provided by~\eqref{linearestimates} and~\eqref{linearenergyestimates}, due to~$\sigma<2\theta$ and~\eqref{eq:notsingbasic}.

On the other hand, if~\eqref{eq:notsingbasic} is violated, so that $\xii^{|\beta|+b-\sigma}$ is not in~$L^r$, we obtain the same estimate in~\eqref{linearestimates} and~\eqref{linearenergyestimates}. Indeed, using
\[ |(i\xi)^\beta \xii^b \hat K_0(t,\cdot)|\leq \begin{cases}
Ct^{\,a+1-\frac{n/r+|\beta|+b}\sigma} \xii^{-\frac{n}r+a\sigma}, & \text{if~$\xii\leq t^{-\frac1\sigma}$,} \\
C\xii^{|\beta|+b-\sigma} & \text{if~$\xii\in [t^{-\frac1\sigma},\eps_0]$,}
\end{cases} \]
for a sufficiently small~$a\in(0,1)$, we now get
\begin{align*} \|(i\xi)^\beta \xii^b \hat K_0(t,\cdot)\|_{L^r}
     & \leq C\,t^{\,a+1-\frac{n/r+|\beta|+b}\sigma} \Big(\int_{\xii\leq t^{-\frac1\sigma}} \xii^{-n+a\sigma r}\, d\xi \Big)^{\frac1r} + C\,\Big(\int_{t^{-\frac1\sigma}\leq\xii\leq\eps_0} \xii^{(|\beta|+b-\sigma)r}\,d\xi \Big)^{\frac1r} \\
     & \approx \begin{cases}
    t^{1-\frac{n/r+|\beta|+b}\sigma}, & \text{if~$|\beta|+b-\sigma<n/r$,}\\
    (\log (e+t))^{\frac1p-\frac1q}, & \text{if~$|\beta|+b-\sigma=n/r$,}
    \end{cases}
\end{align*}
for~$r<\infty$, whereas we estimate $|(i\xi)^\beta \xii^b \hat K_0(t,\cdot)|\leq Ct^{1-\frac{|\beta|+b}\sigma}$ for~$r=\infty$.

Therefore, we obtain
\begin{equation}\label{eq:classical2}
\| \partial_x^\beta (-\Delta)^{\frac{b}2}\partial_t^\ell K_0(t,\cdot)\ast u_1\|_{L^q} \leq C\,(1+t)^{1-\frac{n}\sigma\left(\frac1p-\frac1q\right)-\frac{|\beta|+b}\sigma}\,\|u_1\|_{L^p},
\end{equation}
that is, the same of~\eqref{linearestimates} and~\eqref{linearenergyestimates}, if~$|\beta|+b+n\left(\frac1p-\frac1q\right)<\sigma$.

In turns, this implies that estimates~\eqref{linearestimates} and~\eqref{linearenergyestimates} improves the estimates obtained without splitting the kernels when~$1\leq p\leq 2\leq q\leq \infty$ if, and only if, $(p,q)\neq (2,2)$ and
\begin{equation}\label{eq:optimaldecayrange}
0\leq \frac{n}\sigma\left(\frac1p-\frac1q\right) +\frac{|\beta|+b}{\sigma} + \ell-1 < n\min\left\{\left(\frac1p-\frac12\right),\left(\frac12-\frac1q\right)\right\}.
\end{equation}
However, we shall mention that the case in which
\[ \frac{n}\sigma\left(\frac1p-\frac1q\right) +\frac{|\beta|+b}{\sigma} + \ell-1<0 \]
is of minor interest since, in this case, the estimate provided by~\eqref{linearestimates} and~\eqref{linearenergyestimates} or, equivalently, by~\eqref{eq:classical2}, does not produce a decay rate, but only a control on the possible increasing behavior of the norm as~$t$ grows.

Our approach improves the estimates that may be obtained without splitting the kernels, even for~$1\leq p\leq q<2$ and~$2<q\leq p\leq\infty$, due to the fact that the oscillations lead to an extra loss of decay rate in this case (see, for instance, \cite{DAEP16}) if the kernels are not split, but we avoid the details for the sake of brevity.

Finally, we anticipate that our approach also improves the decay rate in~\eqref{eq:classical} if~\eqref{eq:notsingbasic} holds, provided that some condition, less restrictive than~\eqref{eq:optimaldecayrange}, holds, as shown in Section~\ref{sec:loss} (see Remark~\ref{rem:loss}).


\section{The loss of decay rate in $L^p-L^q$ estimates out of the optimal range for~$(p,q)$}\label{sec:loss}

If we are out of the $(p,q)$ range given by~\eqref{eq:rangederboth} in Theorem~\ref{thm:lowder}, then we may obtain a decay estimate, but a loss of decay appears, with respect to the case in which~\eqref{eq:rangederboth} holds. This situation is quite different with respect to the case of an evolution equation damping-free. Indeed, in such a case, $L^p-L^q$ estimates do not hold out of a $(p,q)$ range analogous to the one in Theorem~\ref{thm:lowder}. Thanks to the presence of the noneffective damping, we may still have estimates outside of these ranges, but we sacrifice some loss of decay, using the multiplier related to the diffusive part of the solution.

This loss becomes larger when~$\theta$ goes from~$\sigma/2$ to~$\sigma$, consistently with the fact that we have no loss in the limit case~$\theta=\sigma/2$. Indeed, the loss originates from the different scaling in the diffusive part of the multiplier, i.e. a $(1,2\theta)$ scaling for~$(t,x)$ in the diffusive part of the multiplier, and a~$(1,\sigma)$ scaling for~$(t,x)$ in the evolution part of the multiplier.

We want to prove the following.
\begin{theorem}\label{thm:loss}
Let $\sigma>1$, $\ell\in\N$, $\beta\in\N^n$ and $b\geq0$. Assume that~$1\leq p\leq q\leq \infty$ and that
\begin{equation}\label{eq:loss}
a = n\left(\frac1p-\frac1q\right)+|\beta|+b+\sigma \left( n\max\left\{\left(\frac12-\frac1p\right),\left(\frac1q-\frac12\right)\right\} +\ell-1\right)
\end{equation}
is nonnegative. Moreover, assume that
\begin{equation}\label{eq:wavereg}
n\max\left\{\left(\frac12-\frac1p\right),\left(\frac1q-\frac12\right)\right\}<1.
\end{equation}
Then we have the following $L^p-L^q$ estimate
\begin{eqnarray}\label{eq:LpLqloss}
\| \partial_x^\beta (-\Delta)^{\frac{b}2}\partial_t^\ell  K_0(t,\cdot)\ast u_1\|_{L^q} \lesssim\, (1+t)^{n\max\left\{\left(\frac12-\frac1p\right),\left(\frac1q-\frac12\right)\right\}-\frac{a}{2\theta}}\,\log(e+t)\,\|u_1\|_{L^p}, \qquad  \forall t\geq 0.
\end{eqnarray}
If~$1<p\leq 2\leq q<\infty$, the log-loss disappears, that is, \eqref{eq:LpLqloss} becomes
\begin{eqnarray}\label{eq:LpLqlossnolog}
\| \partial_x^\beta (-\Delta)^{\frac{b}2}\partial_t^\ell  K_0(t,\cdot)\ast u_1\|_{L^q} \lesssim\, (1+t)^{n\max\left\{\left(\frac12-\frac1p\right),\left(\frac1q-\frac12\right)\right\}-\frac{a}{2\theta}}\,\|u_1\|_{L^p}, \qquad  \forall t\geq 0.
\end{eqnarray}
The log-loss also disappears, that is, we obtain
\begin{eqnarray}\label{eq:LpLqlossnologspecial}
\| \partial_x^\beta (-\Delta)^{\frac{b}2}\partial_t^\ell  K_0(t,\cdot)\ast u_1\|_{L^q} \lesssim\, (1+t)^{-\frac{a}{2\theta}}\,\|u_1\|_{L^p}, \qquad  \forall t\geq 0.
\end{eqnarray}
if~$(p,q)=(1,2)$ or~$(p,q)=(2,\infty)$ and
\[ a=\frac{n}2+|\beta|+b+\sigma(\ell-1)>0. \]
\end{theorem}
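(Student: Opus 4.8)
The plan is to recover estimates outside the optimal range of Theorem~\ref{thm:lowder} by using the diffusive multiplier $e^{-t|\xi|^{2\theta}/2}$ --- which was simply discarded in Sections~\ref{sec:low}--\ref{sec:lowder} --- to trade part of the $L^p$--regularity of the datum for extra decay, and then falling back on Theorem~\ref{thm:lowder} applied on the \emph{boundary} of its admissibility region~\eqref{eq:rangederboth}. First I would make the usual reductions. By Proposition~\ref{ppsmalltimefract} the estimate is trivial on bounded time intervals, so one may take $t\geq1$; and since the transpose of $u_1\mapsto\partial_x^\beta(-\Delta)^{b/2}\partial_t^\ell K_0(t,\cdot)\ast u_1$ is of the same form, while both~\eqref{eq:loss} and~\eqref{eq:wavereg} are invariant under $(p,q)\mapsto(q',p')$, one may assume $1/p+1/q\geq1$, so that $\max\{(1/2-1/p),(1/q-1/2)\}=1/q-1/2$. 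The borderline cases $q=2$ (hence, dually, $p=2$), which contain~\eqref{eq:LpLqlossnologspecial}, I would treat directly: on $\{|\xi|\leq\eps_0\}$ the Leibniz rule, the elementary bounds $|\partial_t^{j}(\omega^{-1}\sin t\omega)|\lesssim|\xi|^{\sigma(j-1)}$ ($j\geq1$) and $\partial_t^{i}e^{-t|\xi|^{2\theta}/2}=(-|\xi|^{2\theta}/2)^{i}e^{-t|\xi|^{2\theta}/2}$, and $\theta\leq\sigma$, give $|(i\xi)^\beta|\xi|^b\partial_t^\ell\hat K_0(t,\xi)|\lesssim|\xi|^{|\beta|+b+\sigma(\ell-1)}e^{-t|\xi|^{2\theta}/2}$, whence by Plancherel $\|\partial_x^\beta(-\Delta)^{b/2}\partial_t^\ell K_0(t,\cdot)\ast g\|_{L^2}\leq\|(i\xi)^\beta|\xi|^b\partial_t^\ell\hat K_0(t,\cdot)\|_{L^2}\,\|g\|_{L^1}\lesssim(1+t)^{-a/(2\theta)}\|g\|_{L^1}$ when $a>0$ (with an extra $\log(e+t)$ if $a=0$); interpolating with the elementary $L^2\to L^2$ bound for the same multiplier settles the whole range $1<p\leq2$, $q=2$ (the exponents combine to $-a/(2\theta)$).

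For the remaining pairs, the key step is the splitting. Fix $\varphi_0^{(1)},\varphi_0^{(2)}\in\mathcal C_c^\infty$, each $\equiv1$ on $\{|\xi|\leq\eps_0/2\}$, supported in $\{|\xi|\leq\eps_0\}$, with $\varphi_0^{(1)}\varphi_0^{(2)}\equiv\varphi_0$. Writing $e^{-t|\xi|^{2\theta}/2}=e^{-t|\xi|^{2\theta}/4}e^{-t|\xi|^{2\theta}/4}$ and expanding $\partial_t^\ell$ by Leibniz, $\partial_x^\beta(-\Delta)^{b/2}\partial_t^\ell K_0(t,\cdot)$ becomes a finite combination of $G_{t,i}\ast H_{t,i}$, $0\leq i\leq\ell$, where
\[
G_{t,i}=\mathfrak{F}^{-1}\!\big((-|\xi|^{2\theta}/4)^{i}e^{-t|\xi|^{2\theta}/4}\varphi_0^{(1)}\big),\qquad H_{t,i}=\mathfrak{F}^{-1}\!\big((i\xi)^\beta|\xi|^b\varphi_0^{(2)}\,\partial_t^{\ell-i}\big[t\,e^{-t|\xi|^{2\theta}/4}\sinc(t\omega)\big]\big).
\]
Rescaling $\xi\mapsto t^{-1/(2\theta)}\xi$ yields $\|G_{t,i}\|_{L^s}\lesssim(1+t)^{-i-\frac n{2\theta}(1-1/s)}$ for every $s\in[1,\infty]$, and each $H_{t,i}$ is a multiplier of exactly the structural type estimated in the proof of Theorem~\ref{thm:lowder} --- a compactly supported cutoff, a diffusive factor with uniformly $L^1$--bounded inverse Fourier transform (the coefficient $1/4$ instead of $1/2$ being irrelevant), and $\partial_t^{\ell-i}$ of the oscillatory kernel $t\,\sinc(t\omega)$ --- so Theorem~\ref{thm:lowder} applies to it verbatim with $\ell$ replaced by $\ell-i$. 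Since the terms $i\geq1$ only improve the decay, it remains to bound $G_{t,0}\ast H_{t,0}$.

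Next I would pick $p_1\in[p,q]$ so that $(p_1,q)$ lies on the boundary of~\eqref{eq:rangederboth}, i.e.\ $\frac n\sigma(1/p_1-1/q)+n(1/q-1/2)+\frac{|\beta|+b}\sigma=1-\ell$ (here $\max\{(1/2-1/p_1),(1/q-1/2)\}=1/q-1/2$ as $p_1\geq p$); condition~\eqref{eq:wavereg} forces $p_1\geq1$ and the hypothesis $a\geq0$ forces $1/p_1\leq1/p$, so this is admissible, and when $1<p\leq2\leq q<\infty$ one keeps $1<p_1\leq2$, shifting also the target exponent towards $2$ if necessary, using the room provided by~\eqref{eq:wavereg}. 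Applying the boundary estimate~\eqref{eq:logK0der} of Theorem~\ref{thm:lowder} to $H_{t,0}$ and Young's inequality $\|G_{t,0}\ast u_1\|_{L^{p_1}}\leq\|G_{t,0}\|_{L^s}\|u_1\|_{L^p}$ with $1/p_1=1/s+1/p-1$, I get
\[
\|\partial_x^\beta(-\Delta)^{b/2}\partial_t^\ell K_0(t,\cdot)\ast u_1\|_{L^q}\lesssim(1+t)^{\,n(1/q-1/2)-\frac n{2\theta}(1/p-1/p_1)}\,\log(e+t)\,\|u_1\|_{L^p}.
\]
The defining relation for $p_1$ reduces $\frac n{2\theta}(1/p-1/p_1)$ to $\frac1{2\theta}\big(n(1/p-1/q)+|\beta|+b+\sigma n(1/q-1/2)+\sigma(\ell-1)\big)=\frac a{2\theta}$, so the exponent equals $n\max\{(1/2-1/p),(1/q-1/2)\}-a/(2\theta)$, which is~\eqref{eq:LpLqloss}. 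The logarithm disappears when $1<p\leq2\leq q<\infty$, since then $G_{t,0}$ is smooth and the estimate for $H_{t,0}$ is log-free by the corresponding clause of Theorem~\ref{thm:lowder} (alternatively, in this range one may re-run the dyadic argument of Proposition~\ref{StationaryPhase}, keeping the diffusive factor and absorbing the logarithm via $L^p\hookrightarrow B^0_{p,2}$ and $B^0_{q,2}\hookrightarrow L^q$), giving~\eqref{eq:LpLqlossnolog}.

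The step I expect to be the real obstacle is the last one: verifying that a boundary pair $(p_1,q_1)$ with $p_1\geq p$, $q_1\leq q$ (and $1<p_1\leq2\leq q_1<\infty$ in the log-free case) always exists, and that the diffusive losses $\frac n{2\theta}(1/p-1/p_1)+\frac n{2\theta}(1/q_1-1/q)$ together with the evolution exponent $n\max\{(1/2-1/p_1),(1/q_1-1/2)\}$ recombine to exactly $n\max\{(1/2-1/p),(1/q-1/2)\}-a/(2\theta)$. This is precisely what hypotheses~\eqref{eq:loss} and~\eqref{eq:wavereg} are designed to guarantee; the bookkeeping requires a short case distinction according to the position of $(p,q)$ relative to the lines $p=2$ and $q=2$, and the degenerate sub-case in which the formula for $p_1$ would leave $[1,\infty]$ is disposed of a fortiori, since there $(\infty,q)$ is already interior to~\eqref{eq:rangederboth} and Theorem~\ref{thm:lowder} furnishes an even faster decay.
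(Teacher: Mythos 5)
Your reductions (small times via Proposition~\ref{ppsmalltimefract}, duality to $1/p+1/q\geq1$, the Plancherel/Hausdorff--Young treatment of $(p,q)=(1,2)$ and $(2,\infty)$, which is exactly how the paper disposes of~\eqref{eq:LpLqlossnologspecial} via~\eqref{eq:classical}) are fine, and your bookkeeping identity $\tfrac n{2\theta}(1/p-1/p_1)=\tfrac a{2\theta}$ is correct \emph{when the prescribed $p_1$ exists}. But the obstacle you flagged at the end is not a bookkeeping nuisance; it is fatal to the mechanism you chose. Young's inequality with the heat factor $G_{t,0}$ can only move the datum from $L^p$ to some $L^{p_1}$ with $p\leq p_1\leq q$ (a nonzero translation-invariant operator cannot be bounded from $L^{p_1}$ to $L^q$ with $p_1>q$), so the total decay extractable this way --- even distributing the shift over both endpoints as $(p,q)\to(p_1,q_1)$ with $p\leq p_1\leq q_1\leq q$ --- is at most $t^{-\frac n{2\theta}(1/p-1/q)}$. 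The theorem requires $t^{-a/(2\theta)}$ with $a-n(1/p-1/q)=|\beta|+b+\sigma\bigl(n\max\{\cdot\}+\ell-1\bigr)$, and this excess is strictly positive in precisely the cases the theorem is needed for: e.g.\ $p=q=1$, $\ell=1$, $\beta=b=0$, $n=1$ (the paper's own plate-equation example in Section~\ref{sec:loss} uses $p=q=1$), where your construction forces $p_1=q_1=1$ and leaves no room at all. Moreover, if you carry out the two-sided bookkeeping with the boundary relation of~\eqref{eq:rangederboth}, the target exponent is reproduced only if $\max\{1/2-1/p_1,1/q_1-1/2\}=\max\{1/2-1/p,1/q-1/2\}$ (the coefficient $1-\sigma/(2\theta)>0$ does not cancel), which pins the shift back down to at most $n(1/p-1/q)$; so no case distinction can rescue the argument.

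The missing idea is that the decay must be traded against \emph{derivatives}, not against integrability. The paper keeps $(p,q)$ fixed throughout and writes, in place of~\eqref{eq:splitder},
\begin{equation*}
\|\partial_x^\beta (-\Delta)^{\frac b2}\partial_t^\ell K_0(t,\cdot)\|_{L^q_p}
\leq C\,t\,\|\mathfrak{F}^{-1}(\xii^{a}e^{-t\xii^{2\theta}/2})\|_{L^1}\,
\|I_a\,\partial_x^\beta (-\Delta)^{\frac b2}\partial_t^\ell \mathfrak{F}^{-1}(\sinc(\omega t)\varphi_0)\|_{L^q_p},
\end{equation*}
so that the full factor $t^{-a/(2\theta)}$ comes from the pure scaling of $\xii^{a}e^{-t\xii^{2\theta}/2}$ in $L^1$, while the Riesz potential $I_a$ is absorbed into the dyadic analysis of Sections~\ref{sec:low}--\ref{sec:lowder}: each block gains $2^{-ka}$, which by the definition of $a$ brings the dyadic exponent exactly to zero (hence the $\log(e+t)$ from the $\approx\log t$ blocks, removable for $1<p\leq2\leq q<\infty$ by the Besov embeddings), and the small-$\eta$ piece $K_{0,0}$ with the Riesz potential is handled by Lemma~\ref{lem:parts}, which is where hypothesis~\eqref{eq:wavereg} (i.e.\ $\delta>0$ in~\eqref{eq:gammaloss}) actually enters. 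Your proposal never uses~\eqref{eq:wavereg} for anything of this kind, which is a further sign that the intended mechanism is different.
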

The loss out of the optimal range~\eqref{eq:rangederboth}, appearing in Theorem~\ref{thm:loss}, is due to the fact that a term~$-a/(2\theta)$ appears in~\eqref{eq:LpLqloss}, in place of~$-a/\sigma$, and~$\sigma<2\theta$. In other words, the loss tends to vanish as~$\theta\to\sigma/2$, i.e., the model becomes closer to the effective damping case.
\begin{remark}
We notice that condition~\eqref{eq:wavereg} is trivially verified if~$1\leq p\leq2\leq q\leq \infty$. Otherwise, it reads as $n(1/q-1/2)<1$ if~$p\leq q\leq2$ or~$n(1/2-1/p)<1$ if~$2\leq p\leq q$. Condition~\eqref{eq:wavereg} is a trivial consequence of~\eqref{eq:rangederboth} in Theorem~\ref{thm:lowder}. Once again, this assumption is related to the restriction on~$L^q-L^q$ estimates or, respectively, $L^p-L^p$ estimates, for a evolution equation damping-free, so it looks natural that it cannot be dropped in a result based in exploiting the influence of the oscillatory part of the fundamental solution to obtain optimal $L^p-L^q$ estimates.
\end{remark}
\begin{proof}[Theorem~\ref{thm:loss}]
In order to prove \eqref{eq:LpLqloss} and~\eqref{eq:LpLqlossnolog}, for any~$t\geq1$, we now replace~\eqref{eq:splitder} by
\begin{equation}\label{eq:splitloss}
\|\partial_x^\beta (-\Delta)^{\frac{b}2}\partial_t^\ell K_0(t,\cdot)\|_{L^q_p}\leq C t\,\|\mathscr{F}^{-1}(\xii^a\,e^{-t|\xi|^{2\theta}/2})\|_{L^1}\,\|I_a\partial_x^\beta (-\Delta)^{\frac{b}2}\partial_t^\ell \mathscr{F}^{-1}(\sinc(\omega t)\varphi_0)\|_{L^q_p}.
\end{equation}
It is clear that
\[ \|\mathscr{F}^{-1}(\xii^a\,e^{-t|\xi|^{2\theta}/2})\|_{L^1}= t^{-\frac{a}{2\theta}}\,\|\mathscr{F}^{-1}(\xii^a\,e^{-|\xi|^{2\theta}/2})\|_{L^1},\]
for any~$t>0$. On the other hand,
\begin{align*}
\| I_a\partial_x^\beta (-\Delta)^{\frac{b}2}\partial_t^\ell \mathscr{F}^{-1}(\sinc(\omega t)\varphi_0)\|_{L^q_p}
    & = t^{-\frac{n}{\sigma}(\frac{1}{p}-\frac{1}{q})+\frac{a-|\beta|-b}\sigma-\ell}\|I_a \partial_x^\beta (-\Delta)^{\frac{b}2}\partial_t^\ell \tilde K_0(t,\cdot)\|_{L^q_p}\\
    & = t^{n\max\left\{\left(\frac12-\frac1p\right),\left(\frac1q-\frac12\right)\right\}-1}\,\|I_a \partial_x^\beta (-\Delta)^{\frac{b}2}\partial_t^\ell \tilde K_0(t,\cdot)\|_{L^q_p}.
\end{align*}
Therefore, the proof of Theorem~\ref{thm:loss} reduces to show that
\begin{align}
\label{eq:loss1}
\|I_a \partial_x^\beta (-\Delta)^{\frac{b}2}\partial_t^\ell K_{0,0}(t,\cdot)\|_{L^q_p}
    & \leq C, \\
\label{eq:loss2}
\|I_a \partial_x^\beta (-\Delta)^{\frac{b}2}\partial_t^\ell K_{0,1}(t,\cdot)\|_{L^q_p}
    & \leq \begin{cases}
    C & \text{if~$1<p\leq 2\leq q<\infty$,}\\
    C\,\log(e+t), & \text{otherwise,}
    \end{cases}
\end{align}
with~$C$ independent of~$t\geq1$. The difference with respect to the analysis in Section~\ref{sec:lowder} is related to the presence of the Riesz potential, so we shall guarantee that this influence may be managed without difficulties. For~\eqref{eq:loss2}, this is trivial, following the proof of Proposition~\ref{StationaryPhase}, as we did in Section~\ref{sec:lowder}. So let us consider~\eqref{eq:loss1}.

Estimate~\eqref{eq:loss1} is a mere consequence of Proposition~\ref{ppsmalltimefract} if~$a\leq b$, so we may assume that~$b<a$, and write~\eqref{eq:loss1} in the form
\[ \|I_{a-b} \partial_x^\beta\partial_t^\ell K_{0,0}(t,\cdot)\|_{L^q_p}\leq C, \]
which follows as a consequence of Young inequality and
\begin{equation}\label{eq:Lr}
\|I_{a-b} \partial_x^\beta\partial_t^\ell K_{0,0}(t,\cdot)\|_{L^r}\leq C,
\end{equation}
where
\[ 1-\frac1r = \frac1p-\frac1q. \]
We easily compute
\begin{equation}\label{eq:gammaloss}
\forall\gamma: \quad \big|\partial_\eta^\gamma \big( |\eta|^{-(a-b)}(i\eta)^\beta \partial_t^\ell \sinc \tilde\omega(t,\eta) \big)\big| \lesssim |\eta|^{-a+b+|\beta|+\ell\sigma-|\gamma|}=|\eta|^{\delta\sigma-n\left(1-\frac1r\right)-|\gamma|},
\end{equation}
for any~$|\eta|\leq2$, where we define
\[ \delta = 1 - n \max\left\{\left(\frac12-\frac1p\right),\left(\frac1q-\frac12\right)\right\}. \]
We remark that~$\delta>0$ if, and only if, \eqref{eq:wavereg} holds. Thanks to~$\delta>0$, we may apply Lemma~\ref{lem:parts} and derive~\eqref{eq:Lr}.

It only remains to prove~\eqref{eq:LpLqlossnologspecial}, but this latter corresponds to~\eqref{eq:classical}, which we already proved, and this concludes the proof.
\end{proof}
\begin{remark}\label{rem:loss}
To show that Theorem~\ref{thm:loss} still provide benefits coming from the strategy of splitting the kernel, we may compare estimate~\eqref{eq:LpLqloss} in Theorem~\ref{thm:loss} when~$a>0$, with the analogous result obtained without splitting the kernels in~\eqref{eq:classical} when~$1\leq p\leq 2\leq q\leq\infty$. The decay rate in~\eqref{eq:classical} is worse than the one provided by~\eqref{eq:LpLqloss}, when both~$p<2<q$. The decay rate is the same if~$p=2$ and~$q\in[2,\infty)$ or~$p\in(1,2]$ and~$q=2$.
\end{remark}
\begin{remark}\label{rem:lossbad}
If condition~\eqref{eq:wavereg} is violated, one may modify Theorem~\ref{thm:loss}, taking
\[ a = n\left(\frac1p-\frac1q\right)+|\beta|+b+\sigma\ell. \]
However, now, following the proof of Theorem~\ref{thm:lowder}, one only gets
\[
\|m\|_{M_{p}^{q}} \leq \sum_{k=0}^{k_0(t)+1}\|m\psi_k\|_{M_{p}^{q}} \leq C\,\sum_{k=0}^{k_0(t)+1} 2^{k\sigma\left(n\max\left\{\left(\frac12-\frac1p\right),\left(\frac1q-\frac12\right)\right\}-1\right)} \approx t^{n\max\left\{\left(\frac12-\frac1p\right),\left(\frac1q-\frac12\right)\right\}-1}\,,
\]
where we used~$2^{k_0(t)+2}\approx t^{\frac1\sigma}$. In turn, this gives
\[ \| \partial_x^\beta (-\Delta)^{\frac{b}2}\partial_t^\ell  K_0(t,\cdot)\ast u_1\|_{L^q} \lesssim\, (1+t)^{n\max\left\{\left(\frac12-\frac1p\right),\left(\frac1q-\frac12\right)\right\}-\frac{1}{2\theta}\left(n\left(\frac1p-\frac1q\right)+|\beta|+b+\sigma\ell\right)}\,\|u_1\|_{L^p}, \qquad  \forall t\geq 0. \]
This estimate is far from being optimal. Indeed, the decay rate may be improved, at least in high space dimension, if we do not split the two kernels, but we treat them together. In other words, when~$1\leq p\leq q<2$ or~$2<p\leq q\leq\infty$, the idea that we proposed in our paper to split the kernels and treat them separately, is clearly valid only if we remain in the validity of the regularity for $\sigma$-evolution equations damping free, namely, if~\eqref{eq:wavereg} holds. The following example shows this limit of the technique of splitting kernels.
\end{remark}
\begin{example}
Let~$\sigma=\theta=2$, $p=q=1$, $b=0$, and~$2|\beta|+\ell\geq1$. Condition~\eqref{eq:wavereg} is verified for~$n=1$, hence we may apply Theorem~\ref{thm:loss}, but does not hold for~$n\geq2$. Therefore, using Theorem~\ref{thm:loss} for~$n=1$ and following Remark~\ref{rem:lossbad} for~$n\geq2$, we obtain
\[ \| \partial_x^\beta \partial_t^\ell K_0(t,\cdot)\ast u_1\|_{L^1} \lesssim\, \begin{cases}
(1+t)^{\frac{n+2}4-\frac{|\beta|}4-\frac\ell2}\,\|u_1\|_{L^1}, & \text{if~$n=1,2$,}\\
(1+t)^{\frac{n}2-\frac{|\beta|}4-\frac\ell2}\,\|u_1\|_{L^1}, & \text{if~$n\geq2$.}
\end{cases}\]
Comparing with the result in~\cite{DAGL}, where the estimate
\[ \| \partial_x^\beta \partial_t^\ell K_0(t,\cdot)\ast u_1\|_{L^1} \lesssim\, (1+t)^{\frac{n+2}4-\frac{|\beta|}4-\frac\ell2}\,\|u_1\|_{L^1}, \qquad  \forall t\geq 0, \]
is proved in any space dimension~$n\geq1$, we see that the decay rates are the same at~$n=1,2$, but the decay rate in~\cite{DAGL} is better, as expected, for any~$n\geq3$. In~\cite{DAGL}, the kernels are not split as in this paper, but they are treated together.
\end{example}

\section{High frequencies estimates ($\xii\geq N_\infty$)}\label{sec:high}

Dealing with derivatives of the solution at high frequencies is not difficult, so we include the derivatives from the beginning in our statement.

If~$\theta<\sigma$, a smoothing effect appears which, in particular, allow us to deal with higher derivatives of the solution and to get a $L^r-L^q$ estimate for any~$1\leq r\leq q\leq\infty$, if we ``pay'' a singularity at~$t=0$. The singularity at~$t=0$ is related to the fact that the smoothing effect requires some positive time to produces its effect. This phenomenon is analogous to what happens in the heat equation. In the limit case~$\theta=\sigma$, the smoothing effect only influences the time derivatives.
\begin{theorem}\label{thm:sing}
Let~$\sigma>1$ and assume a noneffective damping, that is, $\theta\in(\sigma/2,\sigma]$. Let~$1\leq r \leq q \leq \infty$, $\beta\in\N^n$, $\ell\in\N$, and~$b\geq0$. Define
\begin{equation}\label{eq:xreg}
a=n\left(\frac1r-\frac1q\right)+|\beta|+b.
\end{equation}
Then we have the following estimate
\begin{eqnarray}\label{eq:singLrLq}
\| \partial_x^\beta(-\Delta)^{\frac{b}2}\partial_t^\ell K_\infty(t,\cdot)\ast u_1\|_{L^q} \lesssim\,t^{-\delta}\,e^{-ct}\,\|u_1\|_{L^r}, \qquad  \forall t>0,
\end{eqnarray}
where:
\begin{itemize}
\item if~$\theta<\sigma$ and~$a\geq2\theta$, then
\[ \delta = \ell+\frac{a-2\theta}{2(\sigma-\theta)}, \]
if~$(r,q)\in(1,\infty)$, whereas~$\delta$ may be any positive number verifying
\[ \delta>\ell+\frac{a-2\theta}{2(\sigma-\theta)}, \]
if~$r=1$ or~$q=\infty$;
\item if~$\theta\leq\sigma$ and~$a\leq2\theta$, then
\[ \delta = \left( \ell-1+\frac{a}{2\theta}\right)_+, \]
if~$(r,q)\in(1,\infty)$, whereas~$\delta$ may be any nonnegative number verifying
\[ \delta>\ell-1+\frac{a}{2\theta}, \]
if~$r=1$ or~$q=\infty$.
\end{itemize}
\end{theorem}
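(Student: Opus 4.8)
The plan is to work entirely on the Fourier side, where at high frequencies the fundamental solution is $\hat K(t,\xi)=(e^{\lambda_+t}-e^{\lambda_-t})/(\lambda_+-\lambda_-)$ with $\lambda_-(\xi)\sim-\xii^{2\theta}$ and $\lambda_+(\xi)\sim-\xii^{\sigma-\theta}$ as $\xii\to\infty$, both having strictly negative real part on $\{\xii\geq N_\infty\}$. The quantity $\partial_x^\beta(-\Delta)^{b/2}\partial_t^\ell K_\infty$ has symbol $(i\xi)^\beta\xii^b\varphi_\infty(\xi)\,\partial_t^\ell\hat K(t,\xi)$, and after differentiating in $t$, each term is of the form $(i\xi)^\beta\xii^b\varphi_\infty\,(\lambda_+^j e^{\lambda_+ t} - \lambda_-^j e^{\lambda_- t})/(\lambda_+-\lambda_-)$ for $0\le j\le\ell$. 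The factor $(\lambda_+-\lambda_-)^{-1}\sim -\xii^{-(2\theta-\sigma+\theta)}=-\xii^{\sigma-3\theta}$... more precisely $\lambda_+-\lambda_-\sim\xii^{2\theta}$, so we gain $\xii^{-2\theta}$ from the denominator; the dominant contribution is the $\lambda_-$-branch, carrying $\xii^{b+|\beta|+2\theta j-2\theta}=\xii^{a'+2\theta(j-1)}$ with $a'=|\beta|+b+n(1/r-1/q)$, modulated by $e^{\lambda_- t}$ which decays like $e^{-c\xii^{2\theta}t}$.

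First I would split off a fixed exponential gain: write $e^{\lambda_\pm t}=e^{\lambda_\pm t/2}\,e^{\lambda_\pm t/2}$, absorb one factor $e^{-c|\xi|^{2\theta}t/2}\le e^{-cN_\infty^{2\theta}t/2}=:e^{-ct}$ (uniform over the support since $\xii\ge N_\infty$) and keep the other $e^{\lambda_\pm t/2}$ to generate the polynomial-in-$t$ bound. To pass from the multiplier to $L^r$–$L^q$ mapping I would invoke the machinery in the Appendix (the Bernstein-type / multiplier theorems, Theorem~\ref{Th: Bernstein}): it suffices to control $L^2$ norms of the symbol and of $N$ of its $\eta$-derivatives for $N>n/2$, then interpolate. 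Because $\varphi_\infty$ is smooth and vanishes near the origin, every $\xi$-derivative landing on $\lambda_\pm$, on $(\lambda_+-\lambda_-)^{-1}$, or on the cutoff produces an extra negative power of $\xii$ up to harmless constants; the exponential $e^{\lambda_\pm t/2}$ under differentiation spawns factors of $t\,\xii^{2\theta-1}$ (on the $\lambda_-$ branch) or $t\,\xii^{\sigma-\theta-1}$, and these are tamed by the Gaussian-type weight after the standard estimate $\sup_{\rho\ge0}\rho^\mu e^{-c\rho}\lesssim1$. Carrying this out, the $\lambda_-$ branch contributes a time power governed by $\xii^{a'+2\theta(\ell-1)}e^{-c\xii^{2\theta}t}$, whose $L^r$–$L^q$-relevant norm scales like $t^{-(\ell-1+a/(2\theta))}$ when $a\le2\theta$ (rescaling $\xii^{2\theta}t\mapsto\cdot$), while the $\lambda_+$ branch, carrying $\xii^{(a'-2\theta)/\ldots}$... here one uses $\lambda_+^\ell/(\lambda_+-\lambda_-)\sim\xii^{(\sigma-\theta)\ell-2\theta}$ together with $e^{-c\xii^{\sigma-\theta}t}$, yielding $t^{-(\ell+(a-2\theta)/(2(\sigma-\theta)))}$ when $a\ge2\theta$; in the regime $\theta=\sigma$ the $\lambda_+$ branch degenerates and only the time-derivative count matters, matching the stated $\delta$. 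Finally, to decide which branch dominates, note that for $a\le2\theta$ the $\lambda_-$-exponent $a/(2\theta)$ is at most $1$ and produces the milder singularity, so $\delta=(\ell-1+a/(2\theta))_+$ (the positive part coming from the fact that when $a$ is small the symbol is bounded and one simply absorbs everything into the exponential, getting $\delta=0$); for $a\ge2\theta$ the $\lambda_+$ branch governs, giving the other formula.

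The endpoint cases $r=1$ or $q=\infty$ require replacing the clean $L^2$-interpolation argument, which would otherwise cost a logarithmic or borderline factor, by the trivial bound $\|\widehat{g}\|_{L^1}$ controls $L^\infty\to$(anything) and $\|g\|_{L^1}$ controls (anything)$\to L^1$; since one has an honest exponential decay $e^{-ct}$ in reserve, one can afford an arbitrarily small loss $\xii^{\epsilon}$ in the symbol, which is why $\delta$ is only required to satisfy a strict inequality rather than equality in those cases. Concretely I would, for $r=1$ or $q=\infty$, estimate $\|\mathfrak F^{-1}(\text{symbol})\|_{L^1}$ by writing the symbol as $(1+\xii^2)^{-N}$ times a bounded quantity with bounded derivatives (for $N$ large), the extra polynomial weight being swallowed by $e^{-c\xii^{2\theta}t/2}$ at the price of an arbitrarily small increase in $\delta$.

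The main obstacle I anticipate is the bookkeeping at the two-branch competition, i.e. verifying that the $\lambda_+$ branch genuinely does not worsen the $\lambda_-$ estimate when $a\le2\theta$ and vice versa, and tracking the $\theta=\sigma$ degeneration where $\lambda_+-\lambda_-$ no longer behaves like $\xii^{2\theta}$ uniformly (there $\sqrt{1-4\xii^{2\sigma-4\theta}}\to\sqrt{1-4/\xii^{2\sigma}}$... wait, for $\theta=\sigma$ one has $1-4\xii^{-2\sigma}\to1$, so $\lambda_+-\lambda_-\sim\xii^{2\sigma}$ still, and the only subtlety is that $\lambda_+\sim-1$ is bounded, not decaying in $\xii$, which is exactly why for $\theta=\sigma$ the $a\ge2\theta$ branch with $\delta=\ell+(a-2\theta)/(2(\sigma-\theta))$ is vacuous and one always lands in the $a\le2\theta$ case). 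Getting all these asymptotics to hold uniformly on $\{\xii\ge N_\infty\}$, with $N_\infty$ chosen large enough that the square-root expansions are valid and $\mathrm{Re}\,\lambda_\pm<0$ with quantitative gaps, is the routine-but-delicate part; everything else follows the template already used for $K_1$ in Section~\ref{sec:localization} and for $K_{0,1}$ in Section~\ref{sec:low}.
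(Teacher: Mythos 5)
Your symbol analysis is right (the two branches $\lambda_-\sim-\xii^{2\theta}$, $\lambda_+\sim-\xii^{2(\sigma-\theta)}$, the gain $\xii^{-2\theta}$ from $(\lambda_+-\lambda_-)^{-1}$, the absorption of a uniform $e^{-ct}$, and the degeneration $\lambda_+\sim-1$ at $\theta=\sigma$), and you correctly trace the two formulas for $\delta$ to the two branches. But the mechanism you propose for converting these symbol bounds into $L^r-L^q$ mapping properties has two genuine gaps. First, for the interior case $(r,q)\in(1,\infty)$ with the \emph{sharp} $\delta$ (equality), the Bernstein-type route via Theorem~\ref{Th: Bernstein} cannot work: it requires the symbol and its derivatives to lie in $L^2(\{\xii\geq N_\infty\})$, which fails for the critical choice of $\delta$ (take $\ell=|\beta|=b=0$, $r=q=2$, so $\delta=0$; the symbol is $\approx\xii^{-2\theta}$ at, say, $t=1$, which is not square-integrable at infinity once $n\geq4\theta$). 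Moreover, even where it applies, an $L^1$ bound on the kernel only gives $L^p\to L^p$ via Young; to reach $L^r\to L^q$ with $r<q$ by Young one needs the kernel in $L^s$, $1/s=1-1/r+1/q$, and that route inevitably produces the \emph{strict}-inequality $\delta$ (this is exactly Lemma~\ref{lem:highL1} plus Young, which is why the endpoint cases of the theorem carry a strict inequality). The sharp interior case is obtained differently: a Mikhlin--H\"ormander $L^q\to L^q$ bound for the symbol carrying $\kappa=n(1/r-1/q)$ extra derivatives, combined with the Riesz-potential identity $K_\infty\ast u_1=(-\Delta)^{\kappa/2}K_\infty\ast(I_\kappa u_1)$ and the Hardy--Littlewood--Sobolev bound $\|I_\kappa u_1\|_{L^q}\lesssim\|u_1\|_{L^r}$. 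This step is absent from your proposal and is what makes the equality case of $\delta$ attainable.

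Second, your concrete endpoint mechanism is quantitatively wrong: factoring out $(1+\xii^2)^{-N}$ with $N>n/2$ and ``swallowing'' the weight $\xii^{2N}$ into $e^{-ct\xii^{2\theta}/2}$ costs a factor $t^{-N/\theta}$, i.e.\ an increase of $\delta$ by at least $n/(2\theta)$ --- not an arbitrarily small loss. The correct implementation of your (correct) heuristic that only an $\eps$-loss in $\delta$ is needed is the integration-by-parts kernel estimate of Lemma~\ref{lem:parts}/Lemma~\ref{lem:highL1}: choosing $\delta$ slightly above the critical value yields $|\partial_\xi^\gamma(\cdot)|\lesssim\xii^{-|\gamma|-\delta_1}t^{-\delta}e^{-ct}$ for some small $\delta_1>0$, and $n\pm1$ integrations by parts convert this into $|K_\infty(t,x)|\lesssim t^{-\delta}e^{-ct}|x|^{-(n-\delta_1)}(1+|x|)^{-1-\delta_1}$, hence an $L^1\cap L^\infty$ kernel and all $L^r\to L^q$ bounds by Young. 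Finally, a minor point: the asymptotic $\lambda_+\sim-\xii^{\sigma-\theta}$ you quote is a typo (inherited from Section~\ref{sec:localization}); your own exponent $(a-2\theta)/(2(\sigma-\theta))$ is only consistent with $\lambda_+\sim-\xii^{2(\sigma-\theta)}$.
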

If we are interested in non-singular estimates, it is sufficient to take~$\delta=0$ in Theorem~\ref{thm:sing}, and we obtain the following immediate.
\begin{corollary}\label{cor:high}
Let~$\sigma>1$ and assume a noneffective damping, that is, $\theta\in(\sigma/2,\sigma]$. Assume that~$\beta\in\N^n$ and~$b\geq0$ verify~$|\beta|+b<2\theta$. Let~$1\leq r \leq q \leq \infty$, be such that
\begin{equation}\label{eq:genregLr}
n\left(\frac1r-\frac1q\right)\leq 2\theta-|\beta|-b,
\end{equation}
if~$r,q\in(1,\infty)$, or
\[ n\left(\frac1r-\frac1q\right)< 2\theta-|\beta|-b, \]
if~$r=1$ or~$q=\infty$.  Then we have the following estimate
\begin{eqnarray}\label{eq:LrLq}
\| \partial_x^\beta(-\Delta)^{\frac{b}2} K_\infty(t,\cdot)\ast u_1 \|_{L^q} \lesssim\, e^{-ct}\,\|u_1\|_{L^r}, \qquad  \forall t\geq 0.
\end{eqnarray}
Now let~$q\in(1,\infty)$. Then we have the following $L^q-L^q$ estimates
\begin{align}\label{eq:derLqLq}
\| \partial_x^\beta(-\Delta)^{\frac{b}2} K_\infty(t,\cdot)\ast u_1\|_{L^q} & \lesssim\, e^{-ct}\,\|u_1\|_{L^q}, \quad |\beta|+b=2\theta,\qquad \forall t\geq 0,\\
\| \partial_t K_\infty(t,\cdot)\ast u_1\|_{L^q} & \lesssim\, e^{-ct}\,\|u_1\|_{L^q}, \qquad \forall t\geq 0.
\end{align}
\end{corollary}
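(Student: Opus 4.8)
The plan is to analyse the high-frequency multiplier $\varphi_\infty(\xi)\,\partial_t^\ell\hat K(t,\xi)$ directly via Fourier-multiplier estimates on $L^r$, $L^q$, exploiting that at high frequencies the two characteristic roots are
\[
\lambda_-(\xi)\sim -\xii^{2\theta}, \qquad \lambda_+(\xi)\sim -\xii^{2\sigma-2\theta}, \qquad \text{as } \xii\to\infty,
\]
so that $\hat K(t,\xi)=(e^{\lambda_+t}-e^{\lambda_-t})/(\lambda_+-\lambda_-)$ decays exponentially in $\xii$ for each fixed $t>0$, while the denominator $\lambda_+-\lambda_-\sim-\xii^{2\theta}$ provides a gain of $2\theta$ derivatives. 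First I would write $\partial_t^\ell\hat K$ by the Leibniz-type rule as a finite combination of terms of the form $\lambda_\pm^{j}\,(\lambda_+-\lambda_-)^{-1}\,e^{\lambda_\pm t}$ with $0\le j\le \ell$, plus, in the limit case $\theta=\sigma$ where $\lambda_\pm$ coalesce, the degenerate expression involving $t\,e^{\lambda t}$ (the formula $\hat K(t,\xi)=\tfrac{e^{\lambda_+t}-e^{\lambda_-t}}{\lambda_+-\lambda_-}$ is still smooth across $\xii^{2\theta-\sigma}=4$, so no genuine singularity in $\xi$ arises, only a change of representation).

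Next I would split according to the size of $a$ in~\eqref{eq:xreg}. For the factor $(i\xi)^\beta\xii^b$ we have a loss of $|\beta|+b$ derivatives; for the embedding $L^r\hookrightarrow L^q$ we need, by a Sobolev-type multiplier estimate at high frequencies (or, equivalently, by writing $K_\infty(t,\cdot)$ in $L^s$ with $1/s=1-1/r+1/q$ and using Young's inequality together with Bernstein-type estimates on the dyadic blocks $\psi_k\hat K_\infty$ for $2^k\gtrsim N_\infty$), a further $n(1/r-1/q)$ derivatives. Together this is exactly the quantity $a=n(1/r-1/q)+|\beta|+b$. When $a\le 2\theta$, the gain $(\lambda_+-\lambda_-)^{-1}$ already absorbs $a$ derivatives after accounting for the $e^{-ct}$ decay of the exponentials in $\xii$, so the residual time-singularity comes only from $\lambda_-^j\sim\xii^{2\theta j}$-type factors interacting with $e^{\lambda_-t}\sim e^{-t\xii^{2\theta}}$, yielding $\delta=(\ell-1+a/(2\theta))_+$ by a scaling argument $\sup_{\rho>0}\rho^{2\theta(\ell-1)+a}e^{-c\rho^{2\theta}t}\approx t^{-(\ell-1+a/(2\theta))}$. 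When $\theta<\sigma$ and $a>2\theta$, the smoothing effect of the $\lambda_+$-branch takes over: here $e^{\lambda_+t}\sim e^{-t\xii^{2\sigma-2\theta}}$, and the remaining $a-2\theta$ derivatives (beyond the $2\theta$ already gained from the denominator) are absorbed at the cost of $t^{-(a-2\theta)/(2(\sigma-\theta))}$, giving $\delta=\ell+(a-2\theta)/(2(\sigma-\theta))$. The endpoint cases $r=1$ or $q=\infty$ are handled by losing an arbitrarily small $\varepsilon>0$ in the number of derivatives (so that one stays strictly inside $L^1$ or strictly below $L^\infty$ and Young's inequality applies), which is why $\delta$ is only required to be strictly larger than the stated threshold there. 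Corollary~\ref{cor:high} is then immediate: with $\delta=0$ one needs $\ell=0$, $|\beta|+b<2\theta$ and~\eqref{eq:genregLr} in the case $a\le 2\theta$, while the $L^q$–$L^q$ estimates~\eqref{eq:derLqLq} with $|\beta|+b=2\theta$ or with $\partial_t$ correspond to $a=2\theta$, $\delta=0$, and here the restriction $q\in(1,\infty)$ is exactly what is needed to run the multiplier estimate at the endpoint $a=2\theta$ (Mikhlin-type theorem on $L^q$ for $1<q<\infty$), which fails at $q=1,\infty$.

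The main obstacle will be making the high-frequency multiplier estimates uniform and quantitatively sharp in the number of derivatives absorbed, especially near the transition region $\xii^{2\theta-\sigma}\approx4$ where the representation of $\hat K$ changes and where $\sqrt{1-4\xii^{2\sigma-4\theta}}$ is non-smooth at the branch point: one has to check that $\varphi_\infty$ can be chosen to keep its support away from this branch point (recall $N_\infty\gg 2^{1/(2\theta-\sigma)}$, which is precisely arranged for this purpose), so that on $\supp\varphi_\infty$ the symbol is genuinely of the product-of-two-exponentials form with smooth, symbol-class amplitudes $\lambda_\pm$. Once this is secured, the derivative estimates $|\partial_\xi^\gamma(\varphi_\infty\partial_t^\ell\hat K)|\lesssim t^{-\delta}e^{-ct}\xii^{-a-|\gamma|}$ (for $|\gamma|$ up to $n+1$, say) follow by repeated differentiation and the scaling supremum bounds above, and then Corollary~\ref{cor:Besov}-type multiplier theorems from the Appendix, or simply the $L^1$-bound on $\mathfrak F^{-1}$ of the amplitude obtained via Lemma~\ref{lem:parts}, close the argument through Young's inequality.
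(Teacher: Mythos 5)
Your proposal is correct and follows essentially the same route as the paper: symbol estimates on $\varphi_\infty\partial_t^\ell\hat K$ exploiting the two exponential branches $e^{\lambda_\pm t}$, extraction of the time singularity via $\sup_{\rho}\rho^{\delta\kappa}e^{-c t\rho^{\kappa}}\lesssim t^{-\delta}$, Mikhlin--H\"ormander plus Hardy--Littlewood--Sobolev for $r,q\in(1,\infty)$, an $\eps$-loss of derivatives with integration by parts and Young's inequality at the endpoints $r=1$ or $q=\infty$, and finally $\delta=0$ to read off the corollary. The only cosmetic difference is that the paper packages the endpoint kernel bound in a dedicated Lemma~\ref{lem:highL1} (adapted to symbols vanishing near the origin rather than compactly supported ones, so Lemma~\ref{lem:parts} does not apply verbatim as you suggest), but the mechanism is the one you describe.
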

In order to prove Theorem~\ref{thm:sing}, we first derive $L^q-L^q$ estimates, with~$q\in(1,\infty)$ by using Mikhlin-H\"ormander theorem, then we use Hardy-Littlewood-Sobolev Theorem~\ref{thm:HLS} to obtain $L^r-L^q$ estimates, provided that~$(r,q)\in(1,\infty)$, as a corollary. To deal with the difficult case of $r=1$ or~$q=\infty$ in the estimates, we prove that
\[ \| \partial_x^\beta(-\Delta)^{\frac{b}2}\partial_t^\ell K_\infty(t,\cdot)\|_{L^\eta} \lesssim t^{-\delta}\,e^{-ct},\]
for~$\eta\in[1,\infty]$, and then we apply Young inequality.

We recall that
\[ \partial_t^\ell \hat K_\infty(t,\cdot) = \varphi_\infty\,\frac{\lambda_+^\ell}{\lambda_+-\lambda_-}\,e^{\lambda_+t} - \varphi_\infty\,\frac{\lambda_-^\ell}{\lambda_+-\lambda_-}\,e^{\lambda_-t}\,,\]
where
\[ \lambda_\pm = \frac12\,\xii^{2\theta}\Big(-1\pm\sqrt{1-4\xii^{-(4\theta-2\sigma)}}\,\Big). \]
In particular,
\[ \lambda_-\sim -\xii^{2\theta},\quad \lambda_+\sim -\xii^{2\sigma-2\theta},\qquad \text{as~$\xii\to\infty$.} \]
Therefore,
\begin{equation}\label{eq:basichigh}
\big|\partial_\xi^\gamma \big( (i\xi)^\beta\,\xii^b\,\partial_t^\ell \hat K_\infty(t,\xi)\big) \big| \lesssim \xii^{|\beta|+b-2\theta-|\gamma|}\,\Big( \xii^{(2\sigma-2\theta)\ell}\,e^{-ct\xii^{2\sigma-2\theta}} + \xii^{2\theta\ell}\,e^{-ct\xii^{2\theta}} \Big),
\end{equation}
for any~$\xii\geq N_\infty$.

Multiplying and dividing by~$t^\delta$, and using that
\[ t^\delta\xii^{\delta\kappa}\,e^{-\frac{c}2t\xii^\kappa} \]
is bounded for any~$\delta\geq0$, whereas~$e^{-\frac{c}2t\xii^\kappa}\leq e^{-c_1t}$, by virtue of~$\xii\geq N_\infty$, we obtain
\begin{equation}\label{eq:singhigh}
\big|\partial_\xi^\gamma \big( (i\xi)^\beta\,\xii^b\,\partial_t^\ell \hat K_\infty(t,\xi)\big) \big| \lesssim \xii^{|\beta|+b-2\theta-|\gamma|}\,\Big( \xii^{(2\sigma-2\theta)(\ell-\delta)}+ \xii^{2\theta(\ell-\delta)}\Big)\,t^{-\delta}\,e^{-ct}.
\end{equation}
We are now ready to prove our statements.
\begin{lemma}\label{lem:MH}
Let~$\sigma>1$ and assume a noneffective damping, that is, $\theta\in(\sigma/2,\sigma]$. Let~$q\in(1,\infty)$, $\beta\in\N^n$, $\ell\in\N$, and~$b\geq0$. Then we have the following estimate
\begin{eqnarray}\label{eq:singLqLq}
\| \partial_x^\beta(-\Delta)^{\frac{b}2}\partial_t^\ell K_\infty(t,\cdot)\ast u_1\|_{L^q} \lesssim\,t^{-\delta}\,e^{-ct}\,\|u_1\|_{L^q}, \qquad  \forall t>0,
\end{eqnarray}
where:
\begin{itemize}
\item if~$\theta<\sigma$ and~$|\beta|+b\geq2\theta$, then
\[ \delta = \ell+\frac{|\beta|+b-2\theta}{2(\sigma-\theta)}; \]
\item if~$\theta\leq\sigma$ and~$|\beta|+b\leq2\theta$, then
\[ \delta = \left( \ell-1+\frac{|\beta|+b}{2\theta}\right)_+. \]
\end{itemize}

\end{lemma}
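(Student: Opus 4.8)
The plan is to apply the Mikhlin--H\"ormander multiplier theorem on $L^q$, $q\in(1,\infty)$, to the symbol
\[ m_{\beta,b,\ell}(t,\xi) = (i\xi)^\beta\,\xii^b\,\partial_t^\ell\hat K_\infty(t,\xi), \]
after factoring out the scalar $t^{-\delta}e^{-ct}$. The key computation is already in place: estimate~\eqref{eq:singhigh} gives, for every multi-index $\gamma$,
\[ \big|\partial_\xi^\gamma m_{\beta,b,\ell}(t,\xi)\big| \lesssim \xii^{|\beta|+b-2\theta-|\gamma|}\,\big(\xii^{(2\sigma-2\theta)(\ell-\delta)}+\xii^{2\theta(\ell-\delta)}\big)\,t^{-\delta}\,e^{-ct}, \]
valid on $\supp\varphi_\infty\subset\{\xii\geq N_\infty\}$. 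Mikhlin--H\"ormander requires $|\partial_\xi^\gamma m|\lesssim \xii^{-|\gamma|}$ for $|\gamma|\leq \lceil n/2\rceil+1$ (with a constant independent of $t$ after the $t^{-\delta}e^{-ct}$ is pulled out). So the whole lemma reduces to \emph{choosing $\delta$ so that the exponent of $\xii$ in the prefactor is $\leq0$}, i.e. so that
\[ |\beta|+b-2\theta + (2\sigma-2\theta)(\ell-\delta)\leq 0 \quad\text{and}\quad |\beta|+b-2\theta+2\theta(\ell-\delta)\leq 0 \]
simultaneously, and then checking that the stated $\delta$ is the smallest such value.

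First I would treat the case $\theta<\sigma$ and $|\beta|+b\geq2\theta$. Here the $\lambda_+$-branch, whose symbol decays like $e^{-ct\xii^{2\sigma-2\theta}}$, is the dominant one at high frequencies; to kill the growing power $\xii^{|\beta|+b-2\theta+(2\sigma-2\theta)\ell}$ one needs $\delta\geq \ell+\frac{|\beta|+b-2\theta}{2(\sigma-\theta)}$, and with this choice the $\lambda_-$-branch exponent $|\beta|+b-2\theta+2\theta(\ell-\delta)$ is also $\leq0$ because $2\theta>0$ and $\ell-\delta\leq0$ (indeed $\ell-\delta=-\frac{|\beta|+b-2\theta}{2(\sigma-\theta)}\leq0$). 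Taking $\delta$ equal to this value makes the $\lambda_+$-term exactly $\xii^{-|\gamma|}$ times $t^{-\delta}e^{-ct}$, which is admissible, and the $\lambda_-$-term is even better; so Mikhlin--H\"ormander applies and yields~\eqref{eq:singLqLq}.

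For the second case $\theta\leq\sigma$ and $|\beta|+b\leq2\theta$, the situation is that the spatial symbol $\xii^{|\beta|+b-2\theta}$ is already bounded (or decaying), so no regularization in $x$ is needed; only the time derivatives $\partial_t^\ell$ can create growth, through the factors $\xii^{2\theta\ell}$ and $\xii^{(2\sigma-2\theta)\ell}$. Since $2\theta\geq 2\sigma-2\theta$ in the noneffective range, the $\lambda_-$-branch with weight $\xii^{2\theta\ell}e^{-ct\xii^{2\theta}}$ is the one dictating the exponent, and one needs $\delta\geq \ell-1+\frac{|\beta|+b}{2\theta}$ to absorb it (the $-1$ coming from the scalar $e^{-ct}$ after using $\xii\geq N_\infty$ to convert one power $e^{-c't\xii^{2\theta}}$ into $e^{-c_1 t}$; one can equivalently absorb one power of $\xii^{2\theta}$ into the exponential, which accounts for the shift by $1$). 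When this quantity is negative no regularization is needed and $\delta=0$ works, whence the $(\,\cdot\,)_+$; otherwise $\delta=\ell-1+\frac{|\beta|+b}{2\theta}$ and again~\eqref{eq:singhigh} shows all $\xii$-exponents in the prefactor are $\leq0$, so Mikhlin--H\"ormander gives the claim.

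The main obstacle is purely bookkeeping: one must be careful that the \emph{same} $\delta$ controls \emph{both} exponential branches $\lambda_\pm$ at once, and that the splitting $t^\delta\xii^{\delta\kappa}e^{-\frac c2 t\xii^\kappa}\lesssim1$ together with $e^{-\frac c2 t\xii^\kappa}\leq e^{-c_1 t}$ (valid for $\xii\geq N_\infty$) is applied with $\kappa=2(\sigma-\theta)$ and $\kappa=2\theta$ respectively, so that the leftover power of $\xii$ in each case is exactly the one claimed. In the limit case $\theta=\sigma$ there is only the branch $e^{-ct\xii^{2\sigma}}$ (the two roots $\lambda_\pm$ are both $\approx\xii^{2\sigma}$ up to constants, and $\lambda_+-\lambda_-\approx\xii^{2\sigma}$), and the formula $\delta=(\ell-1+\frac{|\beta|+b}{2\theta})_+$ is recovered from the same argument with $\kappa=2\theta=2\sigma$; since $|\beta|+b<2\theta$ is not needed in that branch, the case $|\beta|+b\geq2\theta$ at $\theta=\sigma$ is vacuous in the first bullet and covered by continuity in the second. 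Once $\delta$ is fixed, the constant in Mikhlin--H\"ormander depends only on $n,\sigma,\theta,|\beta|,b,\ell$ and $N_\infty$, hence is independent of $t$, which is exactly what~\eqref{eq:singLqLq} asserts.
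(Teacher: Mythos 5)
Your proposal is correct and follows essentially the same route as the paper: both reduce the lemma to the derivative bounds in~\eqref{eq:singhigh}, verify that the stated $\delta$ makes the exponents $|\beta|+b-2\theta+(2\sigma-2\theta)(\ell-\delta)$ and $|\beta|+b-2\theta+2\theta(\ell-\delta)$ nonpositive on $\{\xii\geq N_\infty\}$ (using $2\theta\geq 2(\sigma-\theta)$ in the noneffective range), and then apply the Mikhlin--H\"ormander theorem with the factor $t^{-\delta}e^{-ct}$ pulled out. The additional bookkeeping you flag (both branches $\lambda_\pm$ controlled by the same $\delta$, the limit case $\theta=\sigma$) is exactly what the paper's verification amounts to.
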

\begin{proof}
By Mikhlin-H\"ormander theorem, the statement follow, since, by~\eqref{eq:singhigh} and by the definition of~$\delta$, we obtain
\[ \big|\partial_\xi^\gamma \big( (i\xi)^\beta\,\xii^b\,\partial_t^\ell \hat K_\infty(t,\xi)\big) \big| \lesssim \xii^{-|\gamma|}\,t^{-\delta}\,e^{-ct}. \]
Indeed, the quantity
\[ \xii^{|\beta|+b-2\theta}\,\Big( \xii^{(2\sigma-2\theta)(\ell-\delta)}+ \xii^{2\theta(\ell-\delta)}\Big) \]
is bounded for~$\xii\geq N_\infty$ if, and only if,
\[ |\beta|+b-2\theta \leq \begin{cases}
(2\sigma-2\theta)(\ell-\delta) & \text{if~$|\beta|+b-2\theta\geq0$,}\\
2\theta(\ell-\delta) & \text{if~$|\beta|+b-2\theta\leq0$.}
\end{cases} \]
\end{proof}
By Hardy-Littlewood-Sobolev Theorem~\ref{thm:HLS}, we get the proof of Theorem~\ref{thm:sing}, exception given for the cases~$r=1$ or~$q=\infty$.
\begin{proof}[Proof of Theorem~\ref{thm:sing} when~$r,q\in(1,\infty)$]
Let us define
\[ \kappa=n\left(\frac1r-\frac1q\right). \]
If~$p,q\in(1,\infty)$, we write
\[ K_\infty(t,\cdot)\ast u_1= (-\Delta)^{\frac\kappa2}\,K_\infty(t,\cdot) \ast (I_\kappa u_1), \]
so that, applying Lemma \ref{lem:MH} we obtain
\[ \| \partial_x^\beta(-\Delta)^{\frac{b}2}\partial_t^\ell K_\infty(t,\cdot)\ast u_1\|_{L^q} = \| \partial_x^\beta(-\Delta)^{\frac{b+\kappa}2}\partial_t^\ell K_\infty(t,\cdot)\ast (I_\kappa u_1)\|_{L^q} \lesssim\,t^{-\delta}\,e^{-ct}\,\|I_\kappa u_1\|_{L^r}, \]
and the proof follows by Hardy-Littlewood-Sobolev Theorem~\ref{thm:HLS}:
\[ \|I_\kappa u_1\|_{L^q}\leq C\,\|u_1\|_{L^r}. \]
\end{proof}
The proof of Theorem~\ref{thm:sing} follows from the following.
\begin{lemma}\label{lem:highL1}
Let~$\sigma>1$ and assume a noneffective damping, that is, $\theta\in(\sigma/2,\sigma]$. Let~$\beta\in\N^n$, $\ell\in\N$, and~$b\geq0$. Then
\begin{eqnarray}\label{eq:highL1}
\| \partial_x^\beta(-\Delta)^{\frac{b}2}\partial_t^\ell K_\infty(t,\cdot)\|_{L^\eta} \lesssim\,t^{-\delta}\,e^{-ct}\qquad  \forall t>0,
\end{eqnarray}
for any~$\eta\in[1,\infty]$, where:
\begin{itemize}
\item if~$\theta<\sigma$ and~$n(1-1/\eta)+|\beta|+b\geq2\sigma$, then~$\delta$ may be any nonnegative number verifying
\[ \delta>\ell+\frac{n\left(1-\frac1\eta\right)+|\beta|+b-2\theta}{2(\sigma-\theta)}; \]
\item if~$\theta\leq\sigma$ and~$n(1-1/\eta)+|\beta|+b\leq2\sigma$, then~$\delta$ may be any nonnegative number verifying
\[ \delta>\ell-1+\frac{n\left(1-\frac1\eta\right)+|\beta|+b}{2\theta}. \]
\end{itemize}
\end{lemma}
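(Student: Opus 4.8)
The plan is to treat the two modes $e^{\lambda_+t}$, $e^{\lambda_-t}$ of the high-frequency kernel separately, reducing each to a $t$-independent profile by a parabolic rescaling — the high-frequency counterpart of the change of variables $\eta=t^{1/\sigma}\xi$ employed at low frequencies. Write
\[ m(t,\xi):=(i\xi)^\beta\xii^b\,\partial_t^\ell\hat K_\infty(t,\xi)=m_+(t,\xi)+m_-(t,\xi),\qquad m_\pm(t,\xi)=\pm\,\varphi_\infty(\xi)\,(i\xi)^\beta\xii^b\,\frac{\lambda_\pm^\ell}{\lambda_+-\lambda_-}\,e^{\lambda_\pm t}. \]
On $\supp\varphi_\infty$ we are in the overdamped regime (recall $N_\infty\gg2^{1/(2\theta-\sigma)}$), where $\lambda_\pm$ is real and smooth, with the expansions $\lambda_-=-\xii^{2\theta}(1+O(\xii^{2\sigma-4\theta}))$, $\lambda_+=-\xii^{2\sigma-2\theta}(1+O(\xii^{2\sigma-4\theta}))$, $\lambda_+-\lambda_-=\xii^{2\theta}(1+O(\xii^{2\sigma-4\theta}))$, and likewise for their derivatives; since $2\sigma-4\theta<0$ and $\xii\geq N_\infty$, all the remainders are uniformly $O(N_\infty^{2\sigma-4\theta})$ and harmless for $N_\infty$ large. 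Thus, modulo uniformly controlled smooth factors, $m_-$ is comparable to $\xii^{q_-}e^{\lambda_-t}$ with $q_-=|\beta|+b+2\theta(\ell-1)$, and — when $\theta<\sigma$ — $m_+$ is comparable to $\xii^{q_+}e^{\lambda_+t}$ with $q_+=|\beta|+b+(2\sigma-2\theta)\ell-2\theta$.

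For each mode I would substitute $\xi=t^{-1/\kappa_\pm}\zeta$, with $\kappa_-=2\theta$ and $\kappa_+=2(\sigma-\theta)$; then $\lambda_\pm(t^{-1/\kappa_\pm}\zeta)\,t=-\lvert\zeta\rvert^{\kappa_\pm}(1+O(N_\infty^{2\sigma-4\theta}))$, and the rescaled symbol becomes, up to the negligible factors, $\lvert\zeta\rvert^{q_\pm}e^{-c\lvert\zeta\rvert^{\kappa_\pm}}\varphi_\infty(t^{-1/\kappa_\pm}\zeta)$ — a $C^\infty$ symbol supported in $\{\lvert\zeta\rvert\geq N_\infty t^{1/\kappa_\pm}\}$ with super-polynomial decay at infinity. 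Pulling the dilation through $\mathscr F^{-1}$ and the $L^\eta$ norm, and keeping the $t$-power produced by the rescaling of $(i\xi)^\beta\xii^b\lambda_\pm^\ell(\lambda_+-\lambda_-)^{-1}$, one gets
\[ \|\mathscr F^{-1}_\xi m_\pm(t,\cdot)\|_{L^\eta}\approx t^{-\frac1{\kappa_\pm}\left(n(1-\frac1\eta)+q_\pm\right)}\;\big\|\mathscr F^{-1}_\zeta\big[\lvert\zeta\rvert^{q_\pm}e^{-c\lvert\zeta\rvert^{\kappa_\pm}}\varphi_\infty(t^{-1/\kappa_\pm}\cdot)\big]\big\|_{L^\eta}. \]
The $t$-exponent in front is precisely $-\delta_\pm$, with $\delta_+=\ell+\frac{n(1-1/\eta)+|\beta|+b-2\theta}{2(\sigma-\theta)}$ and $\delta_-=\ell-1+\frac{n(1-1/\eta)+|\beta|+b}{2\theta}$, the two exponents appearing in the statement. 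It then remains to show that the rescaled kernel norm is bounded for $t\leq1$ and is $O(e^{-ct})$ for $t\geq1$ — the latter since $e^{-c\lvert\zeta\rvert^{\kappa_\pm}}\leq e^{-cN_\infty^{\kappa_\pm}t}$ on the support — whence $\|\mathscr F^{-1}_\xi m(t,\cdot)\|_{L^\eta}\lesssim(t^{-\delta_+}+t^{-\delta_-})\,e^{-ct}$; the announced bound follows by retaining the larger of $\delta_+,\delta_-$ (which is $\delta_+$ in exactly the range of $n(1-1/\eta)+|\beta|+b$ singled out in the statement), the $(\cdot)_+$ truncation covering the regime in which no singularity is produced.

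For the uniform-in-$t\leq1$ bound on the rescaled kernel norm I would argue at the endpoints $\eta\in\{1,\infty\}$ and interpolate for $\eta\in(1,\infty)$, which is consistent since $\delta_\pm$ is affine in $1-1/\eta$. At $\eta=\infty$, the bound $\|\mathscr F^{-1}_\zeta[\cdots]\|_{L^\infty}\leq(2\pi)^{-n}\int_{\lvert\zeta\rvert\geq N_\infty t^{1/\kappa_\pm}}\lvert\zeta\rvert^{q_\pm}e^{-c\lvert\zeta\rvert^{\kappa_\pm}}\,d\zeta$ is $O(1)$ when $q_\pm+n>0$, and otherwise carries an extra factor $t^{(q_\pm+n)/\kappa_\pm}$ that precisely cancels part of the rescaling power, consistently with the $(\cdot)_+$ in $\delta_\pm$. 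At $\eta=1$, since the cut-off $\varphi_\infty(t^{-1/\kappa_\pm}\cdot)$ switches off the possibly singular factor $\lvert\zeta\rvert^{q_\pm}$ near $\zeta=0$, the rescaled symbol is $C^\infty$ with $|\partial_\zeta^\gamma(\cdots)|\lesssim\lvert\zeta\rvert^{q_\pm-|\gamma|}$ uniformly in $t\leq1$ and super-polynomial decay, so Lemma~\ref{lem:parts} yields a preimage decaying like $(1+|x|)^{-n-\varepsilon}$ with $L^1$ norm bounded by a constant (again up to a power of $t$ absorbed into $(\cdot)_+$). The main obstacle is exactly these endpoint cases, where the Mikhlin--H\"ormander argument of Lemma~\ref{lem:MH} is unavailable and one must control the rescaled kernel directly, tracking the near-cut-off region $\lvert\zeta\rvert\sim t^{1/\kappa_\pm}$ carefully so as to obtain the sharp singularity exponent rather than a lossy one; the limiting case $\theta=\sigma$ — where $\kappa_+=0$ and the $\lambda_+$ mode instead contributes a fixed symbol $\sim\xii^{|\beta|+b-2\sigma}$ times $e^{-t}$, whose preimage lies in $L^\eta$ precisely under the case-2 range — must be dealt with separately.
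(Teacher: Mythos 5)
Your route is genuinely different from the paper's. The paper does not rescale at high frequencies: it extracts the factor $t^{-\delta}e^{-ct}$ pointwise from the symbol by writing $e^{-ct\xii^{\kappa}}=t^{-\delta}\cdot\big(t^{\delta}\xii^{\delta\kappa}e^{-ct\xii^{\kappa}/2}\big)\cdot\xii^{-\delta\kappa}e^{-ct\xii^{\kappa}/2}$ and bounding the middle factor by a constant (this is \eqref{eq:singhigh}); the strict inequality $\delta>(\text{threshold})$ then leaves a symbol obeying $|\partial_\xi^\gamma(\cdots)|\lesssim\xii^{-|\gamma|-\delta_1}\,t^{-\delta}e^{-ct}$ for some $\delta_1\in(0,1)$, whose inverse transform is placed in $L^1$ (only $\eta=1$ is written out, the other cases being declared analogous) by a two-regime integration by parts in the spirit of Lemma~\ref{lem:parts} adapted to a symbol supported in $\xii\geq N_\infty$. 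Your mode splitting plus parabolic rescaling is sharper (it aims at the endpoint exponent) and mirrors the low-frequency analysis; the paper's $\varepsilon$-loss is precisely what makes any endpoint kernel analysis unnecessary, which is what its shorter argument buys.

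Two concrete points. First, your parenthetical claim that $\max\{\delta_+,\delta_-\}$ switches ``in exactly the range singled out in the statement'' is not correct: since $\delta_+-\delta_-$ vanishes at $A:=n(1-1/\eta)+|\beta|+b=2\theta$ and is increasing in $A$ (because $\sigma-\theta<\theta$), one has $\delta_+\geq\delta_-$ iff $A\geq2\theta$, not $A\geq2\sigma$. So your method proves the bound with the case split at $2\theta$ --- the split appearing in Theorem~\ref{thm:sing} and Lemma~\ref{lem:MH} --- and in the intermediate range $2\theta<A<2\sigma$ it yields only $\delta>\delta_+$, not the smaller $\delta_-$ claimed by case 2 of the statement. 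This is not a defect of your method: in that range the $\lambda_+$ mode dominates $\hat K_\infty$ for $\xii\geq t^{-1/(2\theta)}$ with no cancellation from the $\lambda_-$ mode, and for instance with $\beta=0$, $\ell=0$, $\eta=\infty$ one has $\|(-\Delta)^{\frac b2}K_\infty(t,\cdot)\|_{L^\infty}\geq c\int\varphi_\infty\xii^{b}\hat K\,d\xi\gtrsim t^{-\delta_+}$ as $t\to0$, so the literal statement cannot hold there; the paper's own proof likewise needs both exponents in \eqref{eq:singhigh} to be negative, i.e. $\delta>\max\{\delta_+,\delta_-\}$. The ``$2\sigma$'' in the statement should be read as ``$2\theta$''. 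Second, you cannot invoke Lemma~\ref{lem:parts} for the $L^1$ endpoint as written: it requires a compactly supported symbol, whereas your rescaled symbol only has rapid decay; you need the non-compact variant (integrate by parts $n+1$ times for $|x|\geq1$ using the rapid decay, and $n-1$ times plus a split at $\xii=|x|^{-1}$ for $|x|\leq1$), which is exactly the computation the paper performs on the un-rescaled symbol. With these two corrections --- and a separate, elementary treatment of the degenerate $\lambda_+$ mode when $\theta=\sigma$, as you already note --- your plan goes through.
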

\begin{proof}
We only prove the Lemma for~$\eta=1$, being the other cases easier and analogous. Similarly to the proof of Lemma~\ref{lem:MH}, by~\eqref{eq:singhigh} and by the definition of~$\delta$, we obtain
\[ \big|\partial_\xi^\gamma \big( (i\xi)^\beta\,\xii^b\,\partial_t^\ell \hat K_\infty(t,\xi)\big) \big| \lesssim \xii^{-|\gamma|-\delta_1}\,t^{-\delta}\,e^{-ct}, \]
for some~$\delta_1\in(0,1)$. For the sake of brevity, let
\[ f(t,\xi)=t^{\delta}\,e^{ct}\,(i\xi)^\beta\,\xii^b\,\partial_t^\ell \hat K_\infty(t,\xi). \]
Proceeding as we do in Lemma~\ref{lem:parts}, integrating by parts~$n+1$ times, we easily get
\[
t^{\delta}\,e^{ct}\,|\partial_x^\beta(-\Delta)^{\frac{b}2}\partial_t^\ell K_\infty(t,x)|
    \lesssim |x|^{-n-1}\,\sum_{|\gamma|=n+1} \int_{\xii\geq N_\infty} |\partial_\xi^\gamma f(t,\xi)|\,d\xi
    \lesssim |x|^{-n-1}\,\int_{\xii\geq N_\infty} \xii^{-n-1-\delta_1} \,d\xi,
\]
with the latter integral being convergent. On the other hand, integrating first~$n-1$ times, splitting in two integrals and integrating by parts one more time in one of the two integrals (similarly to what we do in the proof of Lemma~\ref{lem:parts}), we obtain
\begin{align*}
& t^{\delta}\,e^{ct}\,|\partial_x^\beta(-\Delta)^{\frac{b}2}\partial_t^\ell K_\infty(t,x)|\\
    & \qquad \lesssim |x|^{-(n-1)}\,\sum_{|\gamma|=n-1} \int_{N_\infty\leq\xii\leq|x|^{-1}} |\partial_\xi^\gamma f(t,\xi)|\,d\xi\\
    & \qquad \qquad + |x|^{-n}\,\sum_{j=1}^n\sum_{|\gamma|=n-1} \int_{|x|^{-1}\leq \xii} |\partial_{\xi_j}\partial_\xi^\gamma f(t,\xi)|\,d\xi
    + |x|^{-n}\,\sum_{|\gamma|=n-1} \int_{\xii=|x|^{-1}} |\partial_\xi^\gamma f(t,\xi)|\,d\xi \\
    & \qquad \lesssim |x|^{-(n-1)}\, \int_{N_\infty\leq\xii\leq|x|^{-1}} \xii^{-(n+\delta_1-1)}\,d\xi \\
    & \qquad \qquad + |x|^{-n}\,\int_{|x|^{-1}\leq \xii} \xii^{-(n+\delta_1)}\,d\xi + |x|^{-n}\, \int_{\xii=|x|^{-1}} \xii^{-(n+\delta_1-1)}\,d\xi\\
    & \qquad \lesssim |x|^{-(n-\delta_1)}.
\end{align*}
Since we proved that
\[ t^{\delta}\,e^{ct}\,|\partial_x^\beta(-\Delta)^{\frac{b}2}\partial_t^\ell K_\infty(t,\cdot)| \lesssim |x|^{-(n-\delta_1)}(1+|x|)^{-1-\delta_1}, \]
for some~$\delta_1\in(0,1)$, our claim follows.
\end{proof}
%

\section{Proof of the global existence results}\label{nonlinear}

By Duhamel's principle, a function $u\in X$, where~$X$ is a suitable space, is a solution to~\eqref{eq:CPu} or~\eqref{eq:CPut} in~$X$ if, and only if, it satisfies the equality
\begin{equation}\label{eq:fixedpoint}
u(t,x) =  u^{\mathrm{lin}}(t,x) + \int_0^t K(t-s, x) \ast_{(x)} \, f(u(s,x),u_t(s,x))\, ds\,, \qquad \text{in~$X$,}
\end{equation}
where $f(u,u_t)=|u|^{1+\alpha}$ or $f(u,u_t)=|u_t|^{1+\alpha}$ and
\[ u^{\mathrm{lin}} (t,x) \doteq  K(t,x) \ast_{(x)} u_1(x) \,, \]
is the solution to the linear Cauchy problem~\eqref{eq:CPlin}.

The proof of our global existence results is based on the following scheme. We  define an appropriate data function space $\mathcal A $ and a space for solutions $X(T)$, equipped with a norm induced by some of the decay estimates we obtained for~$u^\lin$, assuming data in~$\mathcal A$. In particular, we look for the estimate
\begin{equation}\label{eq:ubasic}
\|u^{\mathrm{lin}}\|_{X(T)} \leq C\,\|(u_0, u_1)\|_{\mathcal A},
\end{equation}
with~$C$ independent of~$T$. We define the operator~$F$ such that, for any~$u\in X$,
\begin{equation}\label{eq:F}
Fu(t,x) \doteq \int_0^t K(t-s,x)\ast_{(x)} f(u(s,x), u_t(s,x))\, ds\,,
\end{equation}
then we prove the estimates
\begin{align}
\label{eq:well}
\|Fu\|_{X}
    & \leq C\|u\|_{X}^{1+\alpha}\,, \\
\label{eq:contraction}
\|Fu-Fv\|_{X}
    & \leq C\|u-v\|_{X} \bigl(\|u\|_{X}^{\alpha}+\|v\|_{X}^{\alpha}\bigr)\,,
\end{align}
with~$C$ independent of~$T$. By standard arguments, since $u^{\mathrm{lin}}$ satisfies~\eqref{eq:ubasic} and~$\alpha>0$, from~\eqref{eq:well} it follows that~$u^{\mathrm{lin}}+F$ maps balls of~$X$ into balls of~$X$, for small data in~$\mathcal{A}$, and that estimates \eqref{eq:well}-\eqref{eq:contraction} lead to the existence of a unique solution to~\eqref{eq:fixedpoint}, that is, $u=u^{\mathrm{lin}}+Fu$, satisfying~\eqref{eq:ubasic}. We simultaneously gain a local and a global existence result.

The information that~$u\in X$ plays a fundamental role to estimate~$f(u(s,\cdot),u_t(s,\cdot))$ in suitable norms. We will employ the following well-known result.
\begin{lemma}\label{lem:integral}
Let~$\nu<1<\mu$. Then it holds
\[
\int_0^t (t-s)^{-\nu}\,\,(1+s)^{-\mu}\,ds \lesssim (1+t)^{-\nu}.
\]
\end{lemma}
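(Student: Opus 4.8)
The plan is to split the integral $\int_0^t (t-s)^{-\nu}(1+s)^{-\mu}\,ds$ at $s=t/2$ into the pieces $I_1=\int_0^{t/2}$ and $I_2=\int_{t/2}^{t}$, and to estimate each piece by replacing the slowly-varying factor on that subinterval by its value (up to a constant) at an endpoint. The hypotheses $\nu<1$ and $\mu>1$ are exactly what is needed to make the two resulting one-variable integrals converge, the first near its upper limit and the second near $s=t$. It suffices to prove the estimate for $t\geq1$, since for $t\in[0,1]$ the integral is bounded by a constant by the same endpoint argument, and $(1+t)^{-\nu}\approx1$ there; alternatively one observes the integral is a continuous function of $t$ which is finite on $[0,1]$.

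\medskip

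First I would treat $I_1$. For $s\in[0,t/2]$ we have $t-s\geq t/2$, hence $(t-s)^{-\nu}\leq (t/2)^{-\nu}=2^\nu t^{-\nu}$ (using $\nu\geq0$; if $\nu<0$ the same bound holds with $t-s\leq t$, so in either case $(t-s)^{-\nu}\lesssim t^{-\nu}$ on this range). Therefore
\[
I_1 \lesssim t^{-\nu}\int_0^{t/2}(1+s)^{-\mu}\,ds \leq t^{-\nu}\int_0^{\infty}(1+s)^{-\mu}\,ds = \frac{t^{-\nu}}{\mu-1} \lesssim (1+t)^{-\nu},
\]
where the improper integral converges precisely because $\mu>1$, and the last step uses $t\geq1$. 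Next I would treat $I_2$. For $s\in[t/2,t]$ we have $1+s\geq 1+t/2\geq t/2$, so $(1+s)^{-\mu}\leq (t/2)^{-\mu}=2^\mu t^{-\mu}$, and the remaining factor integrates to a constant:
\[
I_2 \lesssim t^{-\mu}\int_{t/2}^{t}(t-s)^{-\nu}\,ds = t^{-\mu}\,\frac{(t/2)^{1-\nu}}{1-\nu} \lesssim t^{1-\nu-\mu},
\]
using $\nu<1$ so that $\int_0^{t/2}r^{-\nu}\,dr$ converges. Since $\mu>1$, we have $1-\mu<0$, hence $t^{1-\nu-\mu}\leq t^{-\nu}\lesssim(1+t)^{-\nu}$ for $t\geq1$. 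Adding the bounds for $I_1$ and $I_2$ gives $\int_0^t (t-s)^{-\nu}(1+s)^{-\mu}\,ds\lesssim(1+t)^{-\nu}$, as claimed.

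\medskip

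There is essentially no serious obstacle here; this is a routine estimate. The only point requiring a little care is to keep track of the case $\nu<0$ (not explicitly excluded by the hypothesis $\nu<1$) in the bound for $(t-s)^{-\nu}$ on $[0,t/2]$, and to handle small $t$ separately so that the final answer is phrased in terms of $(1+t)^{-\nu}$ rather than $t^{-\nu}$; both are immediate. The decay rate $(1+t)^{-\nu}$ is dictated by the first piece $I_1$, where the kernel $(t-s)^{-\nu}$ contributes its full weight while the integrable tail $(1+s)^{-\mu}$ is absorbed into a constant.
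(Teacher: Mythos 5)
Your proof is correct and complete. The paper does not actually prove Lemma~\ref{lem:integral}; it only remarks that the result has been proved in many versions in the literature and cites \cite{Segal}. Your argument --- splitting at $s=t/2$, bounding the slowly varying factor at the endpoint on each half, and using $\mu>1$ for convergence of the tail on $[0,t/2]$ and $\nu<1$ for integrability of the kernel on $[t/2,t]$ --- is exactly the standard argument behind those references, and your handling of the cases $\nu<0$ and $t\le 1$ closes the only points that are sometimes glossed over.
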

Lemma~\ref{lem:integral} has been proved in many different versions by many authors. One earlier version of this lemma goes back to~\cite{Segal}.

\subsection{Proof of Theorem~\ref{thnonlinear}}

In order to prove Theorem~\ref{thnonlinear}, for any~$T>0$, we fix the initial data space to be
\[ \mathcal A = L^1\cap L^\eta,\qquad \eta=\max\{2, n/(2\theta)\}, \]
and we introduce the solution space
\[ X(T)= \mathcal C ([0,T], H^{\sigma}\cap L^\infty) \cap \mathcal C^1([0,T],L^2), \]
equipped with norm
\begin{align}
\nonumber
\|u\|_{X(T)}
    & = \max_{t\in[0,T]} \big( M[u](t) + (1+t)^{-1+\frac{n}\sigma\left(1-\frac1{1+\alpha}\right)} \|u(t,\cdot)\|_{L^{1+\alpha}}\big),
\intertext{where}
\nonumber
M[u](t)
    & = (1+t)^{\frac{n}{4\theta}}\,\|(-\Delta)^{\frac{\sigma}{2}} u(t,\cdot)\|_{L^2} + \|u_t(t,\cdot)\|_{L^2}\big) + (1+t)^{\frac{n}\sigma-1}\|u(t,\cdot)\|_{L^\infty} \\
\label{eq:Mu}
    & \qquad + \|u(t,\cdot)\|_{L^2} \times \begin{cases}
    (1+t)^{-1+\frac{n}{2\sigma}} & \text{if~$n<2\sigma$,} \\
    (\log(e+t))^{-1} & \text{if~$n=2\sigma$,}\\
    (1+t)^{\frac{n-2\sigma}{4\theta}} & \text{if~$n>2\sigma$.}
    \end{cases}
\end{align}
We first prove~\eqref{eq:ubasic}, that is, $M[u^\lin](t)\leq C$, where~$C$ is independent of~$t$.
\begin{lemma}\label{lem:lin}
Let~$u_1\in\mathcal A$. Assume that~$n\leq\bar n(\sigma)$. Then~$u^\lin\in X(T)$ and~\eqref{eq:ubasic} holds, with~$C>0$ independent of~$T>0$.
\end{lemma}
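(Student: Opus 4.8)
The strategy is to split $u^{\lin}=K_0\ast u_1+K_1\ast u_1+K_\infty\ast u_1$ and to bound each piece separately in every norm entering $M[u](t)$ together with the weighted $L^{1+\alpha}$ term that completes the $X(T)$ norm. The intermediate piece is immediate: the exponential estimates of Section~\ref{sec:localization} give $\|\partial_x^\beta(-\Delta)^{b/2}\partial_t^\ell K_1(t,\cdot)\ast u_1\|_{L^q}\lesssim e^{-ct}\|u_1\|_{L^1}$ for all $1\le p\le q\le\infty$, which is dominated by every polynomial (or $\log$) weight appearing in~\eqref{eq:Mu}. The membership $u^{\lin}\in\mathcal C([0,T],H^\sigma\cap L^\infty)\cap\mathcal C^1([0,T],L^2)$ will follow from the quantitative bounds below once we combine them, near $t=0$, with Proposition~\ref{ppsmalltime} and Proposition~\ref{ppsmalltimefract} for the low-frequency part, with the inclusion $L^1\cap L^\eta\hookrightarrow L^2$ (valid since $\eta\ge2$) for the $H^\sigma$ component, and with dominated convergence in the Fourier representation for the time-continuity.

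For the low-frequency part $K_0\ast u_1$ I would treat the ``non-energy'' norms first. Taking $p=1$ and $q=\infty$, $q=1+\alpha$ and, when $n\le 2\sigma$, $q=2$, Proposition~\ref{main} yields exactly the rates $(1+t)^{1-\frac n\sigma\left(1-\frac1q\right)}$ that are built into $X(T)$. This is where the hypothesis $n\le\bar n(\sigma)$ is used: by Remark~\ref{rem:barn}, equality in~\eqref{eq:rangeboth} for $(p,q)=(1,1+\alpha_0)$ is precisely $n=\bar n(\sigma)$, and since the left-hand side of~\eqref{eq:rangeboth} is decreasing in $q$ (for $p=1$, $q\ge2$), the condition holds with strict inequality for $q\in\{1+\alpha,\infty\}$ (because $\alpha>\alpha_0$) and for $q=2$ when $n<2\sigma$; when $n=2\sigma$ one has equality at $q=2$ and uses the $\log$-loss version of Proposition~\ref{main}, which matches the $(\log(e+t))^{-1}$ weight in~\eqref{eq:Mu}.

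The energy-level low-frequency norms $\|(-\Delta)^{\sigma/2}K_0\ast u_1\|_{L^2}$, $\|\partial_t K_0\ast u_1\|_{L^2}$ and, when $n>2\sigma$, $\|K_0\ast u_1\|_{L^2}$, lie \emph{outside} the optimal range~\eqref{eq:rangederboth} (cf.\ the remark after Theorem~\ref{thm:lowder}, since we do not assume $\sigma>2$), so here I would invoke Theorem~\ref{thm:loss} with $p=1$, $q=2$ and $(b,\ell)$ equal to $(\sigma,0)$, $(0,1)$ and $(0,0)$ respectively. In the three cases the exponent $a$ of~\eqref{eq:loss} equals $\frac n2$, $\frac n2$ and $\frac n2-\sigma$, the term $n\max\{\cdots\}$ vanishes, and the special no-log case~\eqref{eq:LpLqlossnologspecial} applies since $(p,q)=(1,2)$ and $a>0$; this produces precisely $(1+t)^{-\frac n{4\theta}}$, $(1+t)^{-\frac n{4\theta}}$ and $(1+t)^{-\frac{n-2\sigma}{4\theta}}$, exactly the $\theta$-dependent weights in $M[u](t)$. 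For the high-frequency part $K_\infty\ast u_1$ I would use Corollary~\ref{cor:high} and Theorem~\ref{thm:sing}: since $|\beta|+b\le\sigma<2\theta$ for every energy-level derivative, $\|(-\Delta)^{\sigma/2}K_\infty\ast u_1\|_{L^2}+\|\partial_t K_\infty\ast u_1\|_{L^2}+\|K_\infty\ast u_1\|_{L^2}\lesssim e^{-ct}\|u_1\|_{L^2}$ with no singularity, and the factor $e^{-ct}$ beats every polynomial weight; the choice $\eta=\max\{2,n/(2\theta)\}$ guarantees $n\left(1/\eta-1/(1+\alpha)\right)<2\theta$ and $n/\eta\le2\theta$, which gives $\|K_\infty\ast u_1\|_{L^{1+\alpha}}\lesssim e^{-ct}\|u_1\|_{L^\eta}$ non-singularly and $\|K_\infty\ast u_1\|_{L^\infty}\lesssim t^{-\delta}e^{-ct}\|u_1\|_{L^\eta}$ for $\delta$ arbitrarily small (and $\delta=0$ when $n<4\theta$), whose combination with Proposition~\ref{ppsmalltime} on $[0,1]$ yields a uniform bound. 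Adding the three contributions gives $M[u^{\lin}](t)+(1+t)^{-1+\frac n\sigma\left(1-\frac1{1+\alpha}\right)}\|u^{\lin}(t,\cdot)\|_{L^{1+\alpha}}\le C\bigl(\|u_1\|_{L^1}+\|u_1\|_{L^\eta}\bigr)$, which is~\eqref{eq:ubasic}.

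The step I expect to be the most delicate is the bookkeeping: deciding for each of the many weighted norms whether one should apply the optimal estimate (Proposition~\ref{main}, Theorem~\ref{thm:lowder}) or the lossy one (Theorem~\ref{thm:loss}), and checking that the $q$-dependent range condition~\eqref{eq:rangeboth} is never violated for the $L^1$--$L^q$ low-frequency estimates --- the binding case being $q=1+\alpha_0$, which is exactly the meaning of the ceiling $n\le\bar n(\sigma)$. A second point of care is that the energy bound is genuinely obtained through the diffusive ($2\theta$-scaling) multiplier via Theorem~\ref{thm:loss}, so one must verify that its special case~\eqref{eq:LpLqlossnologspecial} produces the rate $(1+t)^{-n/(4\theta)}$ without a logarithmic loss; everything else is a routine combination of the estimates of Sections~\ref{sec:low}--\ref{sec:high}.
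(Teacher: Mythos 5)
Your proposal is correct and follows essentially the same route as the paper: the non-energy norms ($L^\infty$, $L^{1+\alpha}$, and $L^2$ when $n\leq 2\sigma$) are obtained from Theorem~\ref{thlinearestimates}, with the hypothesis $n\leq\bar n(\sigma)$ entering exactly through the validity of~\eqref{eq:rangeboth} at $(p,q)=(1,1+\alpha_0)$ and monotonicity in $q$, while the energy-level norms and $\|u\|_{L^2}$ for $n>2\sigma$ come from Theorem~\ref{thm:loss} with $(p,q)=(1,2)$ and $a=\frac n2$, $\frac n2$, $\frac n2-\sigma$ (no log by~\eqref{eq:LpLqlossnologspecial}), combined with Corollary~\ref{cor:high}/Theorem~\ref{thm:sing} at high frequencies. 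Your extra care with the borderline case $n/\eta=2\theta$ at high frequencies (allowing a small singularity $t^{-\delta}$ and using Proposition~\ref{ppsmalltime} near $t=0$) is a refinement the paper glosses over, not a change of method.
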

\begin{proof}
Let $q=2$, $\beta=0$, and~$(b,\ell)=(\sigma,0), (0,1)$. If we apply Theorem~\ref{thm:loss} with~$p=1$ together with Corollary~\ref{cor:high} with~$r=2$, we obtain
\begin{equation}\label{eq:linen}
\|(-\Delta)^{\frac{\sigma}{2}} u^\lin(t,\cdot)\|_{L^2} +\|u_t^\lin(t,\cdot)\|_{L^2} \leq C\,(1+t)^{-\frac{n}{4\theta}}\,\|u_1\|_{L^1\cap L^2}.
\end{equation}
Indeed, $a=n/2>0$ in~\eqref{eq:loss} (due~$(p,q)=(1,2)$, the logarithmic loss may be removed, according to Remark~\ref{rem:loss}).

Now let~$q=2$, $|\beta|=b=\ell=0$. If~$n\leq 2\sigma$, applying Theorem~\ref{thlinearestimates} with~$p=1$ and~$r=2$, we obtain
\begin{equation*}
\|u^\lin(t,\cdot)\|_{L^2}  \leq C\,\|u_1\|_{L^1\cap L^2}\times \begin{cases}
(1+t)^{1-\frac{n}{2\sigma}} & \text{if~$n<2\sigma$,}\\
\log(e+t) & \text{if~$n=2\sigma$.}
\end{cases}
\end{equation*}
If~$n>2\sigma$, then~$a=(n-2\sigma)/2$ in~\eqref{eq:loss}, and we get
\[ \|u^\lin(t,\cdot)\|_{L^2}  \leq C\,(1+t)^{-\frac{n-2\sigma}{4\theta}}\,\|u_1\|_{L^1\cap L^2}. \]
Thanks to the assumption~$1+\alpha>1+\alpha_0$, together with~$n\leq \bar n(\sigma)$, we have that~\eqref{eq:rangeboth} holds with~$p=1$ and~$q=1+\alpha,\infty$ (see Remark~\ref{rem:barn}).

Let~$q=\infty$ and~$|\beta|=b=\ell=0$. By applying Theorem~\ref{thlinearestimates} with~$p=1$ and~$r=\eta$, we immediately obtain
\begin{equation}\label{eq:lininf}
\|u^\lin(t,\cdot)\|_{L^\infty} \leq C\,(1+t)^{1-\frac{n}\sigma}\,\|u_1\|_{L^1\cap L^\eta}.
\end{equation}
Similarly, let~$q=1+\alpha$ and~$|\beta|=b=\ell=0$. By applying Theorem~\ref{thlinearestimates} with~$p=1$ and~$r=\min\{1+\alpha,\eta\}$, we get
\[ \|u^\lin(t,\cdot)\|_{L^{1+\alpha}} \leq C\,(1+t)^{1-\frac{n}\sigma\left(1-\frac1{1+\alpha}\right)}\,\|u_1\|_{L^1\cap L^\eta}. \]
This concludes the proof.
\end{proof}
We are now ready to prove Theorem~\ref{thnonlinear}.
\begin{proof}[Theorem~\ref{thnonlinear}]
In view of Lemma~\ref{lem:lin}, we shall prove only~\eqref{eq:well} and \eqref{eq:contraction}. We prove \eqref{eq:well}, omitting the proof of~\eqref{eq:contraction}, since it is analogous to the proof of~\eqref{eq:well}. Let~$u\in X(T)$. We split the integral in~\eqref{eq:F} in the intervals~$[0,t/2]$ and~$[t/2,t]$ and we estimate~$M[Fu](t)$.

Let $q=2$, $\beta=0$, and~$(b,\ell)=(\sigma,0), (0,1)$. By applying Theorem~\ref{thm:loss} with~$p=1$ in~$[0,t/2]$ and Theorem~\ref{thm:lowder} with~$p=2$ in~$[t/2,t]$, together with Corollary~\ref{cor:high} with~$r=2$, we obtain
\begin{align*}
& \int_0^t \big(\|(-\Delta)^{\frac{\sigma}{2}} K(t-s,\cdot)\ast_{(x)}|u(s,\cdot)|^{1+\alpha}\|_{L^2}+\|K_t(t-s,\cdot)\ast_{(x)}|u(s,\cdot)|^{1+\alpha}\|_{L^2}\big)\,ds \\
    & \qquad \lesssim \int_0^{t/2} (1+t-s)^{-\frac{n}{4\theta}}\,\big(\|u(s,\cdot)\|_{L^{1+\alpha}}^{1+\alpha}+\|u(s,\cdot)\|_{L^{2(1+\alpha)}}^{1+\alpha}\big)\,ds + \int_{t/2}^t \|u(s,\cdot)\|_{L^{2(1+\alpha)}}^{1+\alpha}\,ds.
\end{align*}
Due to~$u\in X(T)$, by interpolation of~$L^{1+\alpha}$ and~$L^\infty$, we know that
\[ \|u(s,\cdot)\|_{L^q}\leq (1+s)^{1-\frac{n}\sigma\left(1-\frac1q\right)}\,\|u\|_{X(T)},\quad q\in[1+\alpha,\infty]. \]
In particular,
\begin{align*}
\|u(s,\cdot)\|_{L^{1+\alpha}}
    & \leq (1+s)^{1-\frac{n}\sigma\left(1-\frac1{1+\alpha}\right)}\,\|u\|_{X(T)},\\
\|u(s,\cdot)\|_{L^{2(1+\alpha)}}
    & \leq (1+s)^{1-\frac{n}\sigma\left(1-\frac1{2(1+\alpha)}\right)}\,\|u\|_{X(T)}.
\end{align*}
By using that~$t-s\approx t$ for~$s\in[0,t/2]$ and~$s\approx t$ for~$s\in[t/2,t]$, we then obtain
\begin{align*}
& \int_0^t \big(\|(-\Delta)^{\frac{\sigma}{2}} K(t-s,\cdot)\ast_{(x)}|u(s,\cdot)|^{1+\alpha}\|_{L^2}+\|K_t(t-s,\cdot)\ast_{(x)}|u(s,\cdot)|^{1+\alpha}\|_{L^2}\big)\,ds \\
    & \qquad \lesssim \|u\|_{X(T)}^{1+\alpha}\,\int_0^{t/2} (1+t-s)^{-\frac{n}{4\theta}}\,(1+s)^{1+\alpha-\frac{n}\sigma\alpha}\,ds + \|u\|_{X(T)}^{1+\alpha}\,\int_{t/2}^t (1+s)^{1+\alpha-\frac{n}\sigma\alpha-\frac{n}{2\sigma}}\,ds \\
    & \qquad \lesssim \|u\|_{X(T)}^{1+\alpha}\,(1+t)^{-\frac{n}{4\theta}}\,\int_0^{t/2} (1+s)^{1+\alpha-\frac{n}\sigma\alpha}\,ds + \|u\|_{X(T)}^{1+\alpha}\,(1+t)^{1+\alpha-\frac{n}\sigma\alpha-\frac{n}{2\sigma}}\,\int_{t/2}^t 1\,ds \\
    & \qquad \lesssim \|u\|_{X(T)}^{1+\alpha}\,(1+t)^{-\frac{n}{4\theta}}\,,
\end{align*}
where we used that
\[ \frac{n}\sigma\alpha - \alpha -1 = \frac{n-\sigma}{\sigma}\,\alpha -1>\frac{n-\sigma}\sigma\alpha_0-1= 1, \]
for any~$\alpha>\alpha_0$, and that~$n/(2\sigma)>n/(4\theta)$. We proceed in a similar way for $\| Fu \|_{L^2}$. If~$n<2\sigma$, then we apply Theorem~\ref{thlinearestimates} with~$p=1$ and~$q=r=2$, obtaining
\begin{align*}
\int_0^t \|K(t-s,\cdot)\ast_{(x)}|u(s,\cdot)|^{1+\alpha}\|_{L^2}\,ds
    & \lesssim \int_0^t (1+t-s)^{1-\frac{n}{2\sigma}}\,\big(\|u(s,\cdot)\|_{L^{1+\alpha}}^{1+\alpha}+\|u(s,\cdot)\|_{L^{2(1+\alpha)}}^{1+\alpha}\big)\,ds\\
    & \lesssim \|u\|_{X(T)}^{1+\alpha}\,\int_0^t (1+t-s)^{1-\frac{n}{2\sigma}}\,(1+s)^{1+\alpha-\frac{n}\sigma\alpha}\,ds\\
    & \lesssim (1+t)^{1-\frac{n}{2\sigma}}\,\|u\|_{X(T)}^{1+\alpha},
\end{align*}
by applying Lemma~\ref{lem:integral} with
\[ \nu=\frac{n}{2\sigma}-1<0, \qquad \mu = \frac{n-\sigma}\sigma\alpha-1>\frac{n-\sigma}\sigma\alpha_0-1=1. \]
The proof for~$n=2\sigma$ is analogous, but a logarithmic term appears, whereas for~$n>2\sigma$ we use Theorem~\ref{thm:loss} with~$a=(n-2\sigma)/2$, obtaining
\begin{align*}
\int_0^t \|K(t-s,\cdot)\ast_{(x)}|u(s,\cdot)|^{1+\alpha}\|_{L^2}\,ds
    & \lesssim \int_0^t (1+t-s)^{-\frac{n-2\sigma}{4\theta}}\,\big(\|u(s,\cdot)\|_{L^{1+\alpha}}^{1+\alpha}+\|u(s,\cdot)\|_{L^{2(1+\alpha)}}^{1+\alpha}\big)\,ds\\
    & \lesssim \|u\|_{X(T)}^{1+\alpha}\,\int_0^t (1+t-s)^{-\frac{n-2\sigma}{4\theta}}\,(1+s)^{1+\alpha-\frac{n}\sigma\alpha}\,ds\\
    & \lesssim (1+t)^{-\frac{n-2\sigma}{4\theta}}\,\|u\|_{X(T)}^{1+\alpha},
\end{align*}
by applying Lemma~\ref{lem:integral} with
\[ \nu=\frac{n-2\sigma}{4\theta}<1, \qquad \mu = \frac{n-\sigma}\sigma\alpha-1>\frac{n-\sigma}\sigma\alpha_0-1=1. \]
Indeed, $\nu\leq (\sigma-1)/(4\theta)<1$ for any~$n\leq\bar n$, in view of Remark~\ref{rem:barn} and~$\sigma\leq2\theta$.

Now let~$q=\infty$, $r=\eta$, $b=|\beta|=\ell=0$. If~$n<2\sigma$, by applying Theorem~\ref{thlinearestimates} with~$p=1$ in~$[0,t]$, we get
\begin{align*}
& \int_0^t \|K(t-s,\cdot)\ast_{(x)}|u(s,\cdot)|^{1+\alpha}\|_{L^\infty}\,ds \\
    & \qquad \lesssim \int_0^t (1+t-s)^{1-\frac{n}{\sigma}}\,\big(\|u(s,\cdot)\|_{L^{1+\alpha}}^{1+\alpha}+\|u(s,\cdot)\|_{L^{\eta(1+\alpha)}}^{1+\alpha}\big)\,ds \\
    & \qquad \lesssim \|u\|_{X(T)}^{1+\alpha}\,\int_0^t (1+t-s)^{1-\frac{n}{\sigma}}\,(1+s)^{1+\alpha-\frac{n}\sigma\alpha}\,ds \\
    & \qquad \lesssim \|u\|_{X(T)}^{1+\alpha}\,(1+t)^{1-\frac{n}{\sigma}}\,,
\end{align*}
where we used~$\alpha>\alpha_0$ and Lemma~\ref{lem:integral}. On the other hand, if~$n\geq 2\sigma$, we may apply Theorem~\ref{thlinearestimates} with~$p=1$ in~$[0,t/2]$, and with~$p=n/(2\sigma)$ in~$[t/2,t]$. Indeed, condition~\eqref{eq:rangeboth} with~$p=n/(2\sigma)$ and~$q=\infty$ reads as
\[ \frac{n}\sigma \left(\frac{2\sigma}n -\frac1\infty \right) + n\left(\frac12-\frac{2\sigma}n\right) \leq 1,\quad \text{i.e.,}\qquad n \leq 4\sigma-2. \]
This latter inequality holds as a consequence of~$n\leq \bar n(\sigma)$ (see Remark~\ref{rem:barn}). In turn, we obtain:
\begin{align*}
& \int_0^t \|K(t-s,\cdot)\ast_{(x)}|u(s,\cdot)|^{1+\alpha}\|_{L^\infty}\,ds \\
    & \qquad \lesssim \int_0^{t/2} (1+t-s)^{1-\frac{n}{\sigma}}\,\big(\|u(s,\cdot)\|_{L^{1+\alpha}}^{1+\alpha}+\|u(s,\cdot)\|_{L^{\eta(1+\alpha)}}^{1+\alpha}\big)\,ds \\
    & \qquad \qquad + \int_{t/2}^t (1+t-s)^{-1}\,\log(e+t-s)\,\big(\|u(s,\cdot)\|_{L^{(1+\alpha)n/(2\sigma)}}^{1+\alpha}+\|u(s,\cdot)\|_{L^{\eta(1+\alpha)}}^{1+\alpha}\big)\,ds \\
    & \qquad \lesssim \|u\|_{X(T)}^{1+\alpha}\,\int_0^{t/2} (1+t-s)^{1-\frac{n}{\sigma}}\,(1+s)^{1+\alpha-\frac{n}\sigma\alpha}\,ds \\
    & \qquad \qquad + \|u\|_{X(T)}^{1+\alpha}\,\int_{t/2}^t (1+t-s)^{-1}\,\log(e+t-s)\,(1+s)^{-\frac{n-\sigma}\sigma(1+\alpha)+2}\,ds \\
    & \qquad \lesssim \|u\|_{X(T)}^{1+\alpha}\,(1+t)^{1-\frac{n}{\sigma}}\int_0^{t/2} (1+s)^{1+\alpha-\frac{n}\sigma\alpha}\,ds \\
    & \qquad \qquad + \|u\|_{X(T)}^{1+\alpha}\,(1+t)^{-\frac{n-\sigma}\sigma(1+\alpha)+2}\,\int_{t/2}^t (1+t-s)^{-1}\,\log(e+t-s)\,ds \\
    & \qquad \lesssim \|u\|_{X(T)}^{1+\alpha}\,(1+t)^{1-\frac{n}{\sigma}}\,,
\end{align*}
where we used once again that
\[ \frac{n-\sigma}\sigma\alpha-1>\frac{n-\sigma}\sigma\alpha_0-1=1, \]
for any~$\alpha>\alpha_0$. Similarly, we obtain
\[ \int_0^t \|K(t-s,\cdot)\ast_{(x)}|u(s,\cdot)|^{1+\alpha}\|_{L^{1+\alpha}}\,ds \lesssim \|u\|_{X(T)}^{1+\alpha}\,(1+t)^{1-\frac{n}{\sigma}\left(1-\frac1{1+\alpha}\right)}\,. \]
This concludes the proof.
\end{proof}

\subsection{Proof of Theorem~\ref{thnonlinear2}}

In order to prove Theorem~\ref{thnonlinear2}, for any~$T>0$, we fix the initial data space to be
\[ \mathcal A = L^1\cap L^{1+\alpha}, \]
and we introduce the solution space
\[ X(T)= \mathcal C ([0,T], H^{\sigma}) \cap \mathcal C^1([0,T],L^2\cap L^{1+\alpha}), \]
equipped with norm
\[ \|u\|_{X(T)} = \max_{t\in[0,T]} \big( M[u](t) + (1+t)^{\frac{n}\sigma\left(1-\frac1{1+\alpha}\right)} \|u_t(t,\cdot)\|_{L^{1+\alpha}}\big),\]
where~$M[u](t)$ is as in~\eqref{eq:Mu}. We remark that now the last term in~$M[u](t)$ is
\[ (1+t)^{-1+\frac{n}{2\sigma}}\|u(t,\cdot)\|_{L^2},\]
since~$n\leq \sigma-2<2\sigma$.

We first prove~\eqref{eq:ubasic}, that is, $M[u^\lin](t)\leq C$, where~$C$ is independent of~$t$.
\begin{lemma}\label{lem:lin2}
Let~$u_1\in\mathcal A$. Assume that~$\sigma\geq3$ and~$n\leq \sigma-2$. Then~$u^\lin\in X(T)$ and~\eqref{eq:ubasic} holds, with~$C>0$ independent of~$T>0$.
\end{lemma}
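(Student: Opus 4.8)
The plan is to follow the proof of Lemma~\ref{lem:lin} almost verbatim, splitting $u^\lin = K_0(t,\cdot)\ast u_1 + K_1(t,\cdot)\ast u_1 + K_\infty(t,\cdot)\ast u_1$ and estimating, uniformly in $t\in[0,T]$, each of the five ingredients that make up $\|u^\lin\|_{X(T)}$: the two energy norms $(1+t)^{\frac{n}{4\theta}}\|(-\Delta)^{\frac\sigma2}u^\lin\|_{L^2}$ and $(1+t)^{\frac{n}{4\theta}}\|u_t^\lin\|_{L^2}$; the sup norm $(1+t)^{\frac n\sigma-1}\|u^\lin\|_{L^\infty}$; the term $(1+t)^{-1+\frac{n}{2\sigma}}\|u^\lin\|_{L^2}$, which is the only surviving case of~\eqref{eq:Mu} since $n\le\sigma-2<2\sigma$; and the new term $(1+t)^{\frac n\sigma(1-\frac1{1+\alpha})}\|u_t^\lin\|_{L^{1+\alpha}}$. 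The intermediate piece $K_1(t,\cdot)\ast u_1$ decays exponentially in every $L^p$--$L^q$ pair (Section~\ref{sec:localization}), so it contributes nothing; and since $\alpha>\alpha_1=\sigma/n\ge\sigma/(\sigma-2)>1$ we have $1+\alpha>2$, whence the interpolation embedding $L^1\cap L^{1+\alpha}\hookrightarrow L^2$ lets us bound $\|u_1\|_{L^2}$ by $\|u_1\|_{\mathcal A}$.

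For the two energy norms I would argue exactly as for~\eqref{eq:linen}: apply Theorem~\ref{thm:loss} with $p=1$, $q=2$, $\beta=0$ and $(b,\ell)=(\sigma,0)$ or $(0,1)$, for which $a=n/2>0$ in~\eqref{eq:loss} and~\eqref{eq:wavereg} is trivial; since $(p,q)=(1,2)$ and $a>0$, estimate~\eqref{eq:LpLqlossnologspecial} applies and gives the low-frequency bound $\lesssim(1+t)^{-n/(4\theta)}\|u_1\|_{L^1}$ with no logarithmic loss, while Corollary~\ref{cor:high} with $r=2$ (using $|\beta|+b=\sigma<2\theta$, respectively the time-derivative estimate) yields the exponentially small high-frequency contribution. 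For $u^\lin$ itself in $L^2$ and $L^\infty$ I would simply quote Theorem~\ref{thlinearestimates}, which already bundles low and high frequencies: with $(p,q)=(1,2)$, $r=2$ it gives $\|u^\lin\|_{L^2}\lesssim(1+t)^{1-n/(2\sigma)}\|u_1\|_{L^1\cap L^2}$, and with $(p,q)=(1,\infty)$, $r=1+\alpha$ it gives $\|u^\lin\|_{L^\infty}\lesssim(1+t)^{1-n/\sigma}\|u_1\|_{\mathcal A}$; in both cases $n\le\sigma-2<\sigma$ makes~\eqref{eq:rangeboth} strict (no log loss), and~\eqref{eq:regLr} holds because $n/(1+\alpha)<n<2\theta$.

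The genuinely new ingredient, and the step I expect to be the real obstacle, is the bound $\|u_t^\lin\|_{L^{1+\alpha}}\lesssim(1+t)^{-\frac n\sigma(1-\frac1{1+\alpha})}\|u_1\|_{\mathcal A}$. For the low frequencies I would apply Theorem~\ref{thm:lowder} with $p=1$, $q=1+\alpha$, $\beta=0$, $b=0$, $\ell=1$, which produces precisely this decay rate provided the admissibility condition~\eqref{eq:rangederboth} holds. Since $1+\alpha>2$ one has $\max\{\frac12-\frac1p,\frac1q-\frac12\}=\frac1{1+\alpha}-\frac12$, and~\eqref{eq:rangederboth} simplifies, after multiplying through by $\sigma$ and rearranging, to $1+\alpha>\frac{2(\sigma-1)}{\sigma-2}$. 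This is exactly where $\sigma\ge3$ and $n\le\sigma-2$ are used: they give $\alpha>\alpha_1=\sigma/n\ge\sigma/(\sigma-2)$, hence $1+\alpha>1+\frac{\sigma}{\sigma-2}=\frac{2(\sigma-1)}{\sigma-2}$, so~\eqref{eq:rangederboth} is strict and there is no log loss. The matching high-frequency bound is the time-derivative estimate in Corollary~\ref{cor:high}, $\|\partial_t K_\infty(t,\cdot)\ast u_1\|_{L^{1+\alpha}}\lesssim e^{-ct}\|u_1\|_{L^{1+\alpha}}$, legitimate because $1+\alpha\in(1,\infty)$.

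Putting the five bounds together gives $\|u^\lin\|_{X(T)}\le C\|u_1\|_{\mathcal A}$ with $C$ independent of $T$, which is~\eqref{eq:ubasic}; the membership $u^\lin\in X(T)$, i.e. continuity in $t$ with values in $H^\sigma$ and $C^1$ with values in $L^2\cap L^{1+\alpha}$, follows from the continuous dependence of $\hat K(t,\cdot)$ on $t$ together with these uniform bounds, by a routine density argument. Apart from the bookkeeping needed to check~\eqref{eq:rangederboth} for the $L^{1+\alpha}$ derivative estimate, the proof is an almost word-for-word repetition of Lemma~\ref{lem:lin}.
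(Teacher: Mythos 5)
Your proposal is correct and follows essentially the same route as the paper: the energy terms via Theorem~\ref{thm:loss} (estimate~\eqref{eq:LpLqlossnologspecial} with $a=n/2$) plus Corollary~\ref{cor:high}, the $L^2$ and $L^\infty$ norms via Theorem~\ref{thlinearestimates}, and the new $\|u_t^\lin\|_{L^{1+\alpha}}$ bound via Theorem~\ref{thm:lowder} with $p=1$, $\ell=1$ together with Corollary~\ref{cor:high} with $r=1+\alpha$. Your verification that~\eqref{eq:rangederboth} reduces to $\alpha>\sigma/(\sigma-2)$, guaranteed strictly by $\alpha>\alpha_1=\sigma/n$ and $n\leq\sigma-2$, is exactly the role the paper assigns to condition~\eqref{eq:rangeENboth} at $(p,q)=(1,1+\alpha)$.
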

\begin{proof}
Following as in the proof of Lemma~ \ref{lem:lin}, we derive~\eqref{eq:linen} and
\[ \|u^\lin(t,\cdot)\|_{L^2}  \leq C\,(1+t)^{1-\frac{n}{2\sigma}}\,\|u_1\|_{L^1\cap L^2}.\]
Since~\eqref{eq:rangeboth} holds with~$p=1$ and~$q=\infty$, as a consequence of~$n\leq \sigma-2$, we also derive~\eqref{eq:lininf}.

By using~$1+\alpha>1+\alpha_1$, together with~$n\leq \sigma-2$, we get~\eqref{eq:rangeENboth} with~$p=1$ and~$q=1+\alpha$. Let~$q=1+\alpha$, $|\beta|=b=0$ and~$\ell=1$. By applying Theorem~\ref{thm:lowder} with~$p=1$ together with Corollary~\ref{cor:high} with~$r=1+\alpha$, we obtain
\[ \|u^\lin_t(t,\cdot)\|_{L^{1+\alpha}} \leq C\,(1+t)^{-\frac{n}\sigma\left(1-\frac1{1+\alpha}\right)}\,\|u_1\|_{L^1\cap L^{1+\alpha}}. \]
This concludes the proof.
\end{proof}
We are now ready to prove Theorem~\ref{thnonlinear2}.
\begin{proof}[Theorem~\ref{thnonlinear2}]
In view of Lemma~\ref{lem:lin2}, we shall prove only~\eqref{eq:well} and \eqref{eq:contraction}. We prove \eqref{eq:well}, omitting the proof of~\eqref{eq:contraction}, since it is analogous to the proof of~\eqref{eq:well}. Let~$u\in X(T)$.

We may follow the proof of Theorem~\ref{thnonlinear}, but we now use singular estimates at high frequencies, to avoid to deal with~$\|u_t(s,\cdot)\|_{L^\infty}$.

Let $q=2$, $\beta=0$, and~$(b,\ell)=(\sigma,0), (0,1)$. We fix~$\delta\in(0,1)$, such that~$\delta>n/(4\theta)$. We may take this choice of such~$\delta<1$, due to~$n\leq \sigma-2<4\theta$. By applying Theorem~\ref{thm:loss} with~$p=1$, together with Theorem~\ref{thm:sing} with~$r=1$, we obtain
\begin{align*}
& \int_0^t \big(\|(-\Delta)^{\frac{\sigma}{2}} K(t-s,\cdot)\ast_{(x)}|u_t(s,\cdot)|^{1+\alpha}\|_{L^2}+\|K_t(t-s,\cdot)\ast_{(x)}|u_t(s,\cdot)|^{1+\alpha}\|_{L^2}\big)\,ds \\
    & \qquad \lesssim \int_0^t \big((1+t-s)^{-\frac{n}{4\theta}}\,+ (t-s)^{-\delta} e^{-c(t-s)}\big)\,\|u_t(s,\cdot)\|_{L^{1+\alpha}}^{1+\alpha}\,ds.
\end{align*}
Due to~$u\in X(T)$, we know that
\[ \|u_t(s,\cdot)\|_{L^{1+\alpha}}\leq (1+s)^{-\frac{n}\sigma\left(1-\frac1{1+\alpha}\right)}\,\|u\|_{X(T)};\]
hence, we get
\begin{align*}
& \int_0^t \big(\|(-\Delta)^{\frac{\sigma}{2}} K(t-s,\cdot)\ast_{(x)}|u_t(s,\cdot)|^{1+\alpha}\|_{L^2}+\|K_t(t-s,\cdot)\ast_{(x)}|u_t(s,\cdot)|^{1+\alpha}\|_{L^2}\big)\,ds \\
    & \qquad \lesssim \|u\|_{X(T)}\,\int_0^t \big((1+t-s)^{-\frac{n}{4\theta}} + (t-s)^{-\delta} e^{-c(t-s)}\big)\,(1+s)^{-\frac{n}\sigma\alpha}\,ds\\
    & \qquad \lesssim (1+t)^{-\frac{n}{4\theta}}\,\|u\|_{X(T)}\,,
\end{align*}
where we used~$n\leq \sigma-2<4\theta$, and
\[ \frac{n}\sigma\alpha>\frac{n}\sigma\alpha_1=1, \]
together with Lemma~\ref{lem:integral} to obtain
\[ \int_0^t (1+t-s)^{-\frac{n}{4\theta}}\,(1+s)^{-\frac{n}\sigma\alpha}\,ds \lesssim (1+t)^{-\frac{n}{4\theta}}\,, \]
whereas we estimate
\[ \int_0^{t/2} (t-s)^{-\delta} e^{-c(t-s)}\,(1+s)^{-\frac{n}\sigma\alpha}\,ds \lesssim e^{-ct/2}, \]
and
\[ \int_{t/2}^t (t-s)^{-\delta} e^{-c(t-s)}\,(1+s)^{-\frac{n}\sigma\alpha}\,ds \lesssim (1+t)^{-\frac{n}\sigma\alpha} \leq (1+t)^{-\frac{n}{4\theta}}, \]
thanks again to~$\alpha>\alpha_1$. To deal with the term $\| Fu(t,\cdot)\|_{L^2}$ we do not need the singular estimates. Indeed, being~$n\leq4\theta$, we may take~$p=r=1$ in Theorem~\ref{thlinearestimates}, and get
\[ \int_0^t\|K(t-s,\cdot)\ast_{(x)}|u_t(s,\cdot)|^{1+\alpha}\|_{L^2}\,ds\lesssim \|u\|_{X(T)}\,\int_0^t (1+t-s)^{1-\frac{n}{2\sigma}}\,(1+s)^{-\frac{n}\sigma\alpha}\,ds\lesssim (1+t)^{1-\frac{n}{2\sigma}}\,\|u\|_{X(T)}\,,\]
for any~$\alpha>\alpha_1$.

Similarly, let~$q=\infty$, $b=|\beta|=\ell=0$. Due to~$n\leq \sigma-2<2\theta$, by applying Theorem~\ref{thlinearestimates} with~$p=r=1$, we obtain
\begin{align*}
\int_0^t \|K(t-s,\cdot)\ast_{(x)}|u_t(s,\cdot)|^{1+\alpha}\|_{L^\infty}\,ds
    & \lesssim \int_0^t (1+t-s)^{1-\frac{n}{\sigma}} \|u_t(s,\cdot)\|_{L^{1+\alpha}}^{1+\alpha}\,ds \\
    & \lesssim \|u\|_{X(T)}^{1+\alpha}\,\int_0^t (1+t-s)^{1-\frac{n}{\sigma}}\,(1+s)^{-\frac{n}\sigma\alpha}\,ds \\
    & \lesssim \|u\|_{X(T)}^{1+\alpha}\,(1+t)^{1-\frac{n}{\sigma}}\,,
\end{align*}
by Lemma~\ref{lem:integral}, due to~$1-n/\sigma>0$ and~$\alpha>\alpha_1$.

Finally, let~$q=1+\alpha$, $b=|\beta|=0$, $\ell=1$. Thanks again to~$n\leq \sigma-2<2\theta$, we may fix~$\delta\in(0,1)$ such that~$\delta>n/(2\theta)$. By applying Theorem~\ref{thm:lowder} with~$p=1$ and Theorem~\ref{thm:sing} with~$r=1$, we derive
\begin{align*}
& \int_0^t \|K_t(t-s,\cdot)\ast_{(x)}|u_t(s,\cdot)|^{1+\alpha}\|_{L^{1+\alpha}}\,ds \\
    & \qquad \lesssim \int_0^t \big( (1+t-s)^{-\frac{n}{\sigma}\left(1-\frac1{1+\alpha}\right)} + (t-s)^{-\delta}\,e^{-c(t-s)} \big) \,\|u_t(s,\cdot)\|_{L^{1+\alpha}}^{1+\alpha}\,ds \\
    & \qquad \lesssim \|u\|_{X(T)}^{1+\alpha}\,\int_0^t \big( (1+t-s)^{-\frac{n}{\sigma}\left(1-\frac1{1+\alpha}\right)} + (t-s)^{-\delta}\,e^{-c(t-s)} \big)\,(1+s)^{-\frac{n}\sigma\alpha}\,ds \\
    & \qquad \lesssim \|u\|_{X(T)}^{1+\alpha}\,(1+t)^{-\frac{n}{\sigma}\left(1-\frac1{1+\alpha}\right)}\,,
\end{align*}
where we used~$n/\sigma\leq(\sigma-2)/\sigma<1$, $\alpha>\alpha_1$ and Lemma~\ref{lem:integral}.

This concludes the proof.
\end{proof}

\appendix

\section{Multiplier theorems}\label{sec:Appendix}
In this appendix we collect several results employed in the paper to prove that a function is a multiplier in~$M_p^q$, basing on suitable estimates for the function and its derivatives.

A key result for multipliers in~$M_p$ with~$p\in(1,\infty)$ is the Mikhlin-H\"ormander multiplier theorem.
\begin{theorem}\label{MH}
Let $1<p<\infty$ and $k=[n/2]+1$. Suppose that $m \in \mathcal C^{k}(\R^{n}\backslash \left\{ 0 \right\})$ and
\begin{equation}\nonumber
\left| \partial_\xi^{\gamma}m(\xi)\right| \leq C\, |\xi|^{-|\gamma|}, \quad |\gamma|\leq k.
\end{equation}
Then $m \in M_p$.
\end{theorem}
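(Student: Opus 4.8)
The plan is to reduce Theorem~\ref{MH} to the classical Calderón--Zygmund machinery. Since the hypothesis with $\gamma=0$ gives $|m(\xi)|\leq C$, we have $m\in L^\infty$, so the operator $T_mf:=\mathfrak{F}^{-1}(m\,\hat f)$ is bounded on $L^2$ with $\|T_m\|_{L^2\to L^2}\leq C$. By the Calderón--Zygmund theorem it then suffices to exhibit a locally integrable kernel $K$ on $\R^n\setminus\{0\}$ that represents $T_m$ off the diagonal and satisfies Hörmander's regularity condition
\[
\sup_{y\neq 0}\ \int_{|x|\geq 2|y|}|K(x-y)-K(x)|\,dx\ <\ \infty;
\]
this yields that $T_m$ is of weak type $(1,1)$, hence (by Marcinkiewicz interpolation with the $L^2$ bound) bounded on $L^p$ for $1<p\leq 2$, and then bounded on $L^p$ for $2\leq p<\infty$ by duality, since $\overline{m(-\,\cdot\,)}$ satisfies exactly the same hypotheses. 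Thus $m\in M_p$ for all $p\in(1,\infty)$.

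To build $K$ and check the condition I would use a dyadic decomposition and scaling. Let $\{\psi_j\}_{j\in\Z}$ be the partition of unity of Notation~\ref{not:Besov}, put $m_j=\psi_j m$ and $K_j=\mathfrak{F}^{-1}(m_j)$; then $m=\sum_j m_j$ in $\mathcal S'$ by dominated convergence (using $|m|\leq C$). Writing $m_j(\xi)=\tilde m_j(2^{-j}\xi)$ with $\tilde m_j(\zeta)=m(2^j\zeta)\,\psi(\zeta)$, supported in $\{1/2\leq|\zeta|\leq 2\}$, we get $K_j(x)=2^{jn}\tilde K_j(2^jx)$ with $\tilde K_j=\mathfrak{F}^{-1}\tilde m_j$. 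The chain rule together with the assumed bounds $|\partial^\gamma m(\zeta')|\lesssim|\zeta'|^{-|\gamma|}$ and the smoothness of $\psi$ give $|\partial_\zeta^\gamma\tilde m_j(\zeta)|\leq C$ for all $|\gamma|\leq k$, \emph{uniformly in $j$} --- the dilation factor $2^{j|\gamma|}$ exactly absorbs the homogeneity $|2^j\zeta|^{-|\gamma|}\approx 2^{-j|\gamma|}$ on the annulus. Since $k=[n/2]+1$ we have $2k>n$; applying Plancherel to the multinomial expansion of $(|u|^{2})^{k}$ yields $\|(1+|u|)^{k}\tilde K_j\|_{L^2}\leq C$ and, since $\zeta_\ell\tilde m_j$ obeys the same bounds, $\|(1+|u|)^{k}\nabla\tilde K_j\|_{L^2}\leq C$, uniformly in $j$. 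Pairing with $(1+|u|)^{-k}\in L^2$ via Cauchy--Schwarz then gives the uniform estimates
\[
\|\tilde K_j\|_{L^1}+\|\nabla\tilde K_j\|_{L^1}\leq C,\qquad \int_{|u|\geq R}|\tilde K_j(u)|\,du\leq C\,R^{-\delta}\quad(R\geq 1),
\]
with $\delta:=k-\tfrac n2>0$. Combining these with $|\tilde K_j|\leq\|\tilde m_j\|_{L^1}\leq C$ (useful as $j\to-\infty$) shows $\sum_j K_j$ converges in $L^1_{\mathrm{loc}}(\R^n\setminus\{0\})$ to a function $K$ which represents $T_m$ off the support of the datum in the usual way.

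Finally I would verify the Hörmander condition term by term. Rescaling by $u=2^jx$, $z=2^jy$ (the constraint $|x|\geq 2|y|$ becomes $|u|\geq 2|z|$), one has $\int_{|x|\geq 2|y|}|K_j(x-y)-K_j(x)|\,dx=g_j(2^jy)$, where $g_j(z)=\int_{|u|\geq 2|z|}|\tilde K_j(u-z)-\tilde K_j(u)|\,du$. The mean value theorem gives $g_j(z)\leq|z|\,\|\nabla\tilde K_j\|_{L^1}\leq C|z|$, while $|u|\geq 2|z|$ forces $|u-z|\geq|z|$, so $g_j(z)\leq 2\int_{|v|\geq|z|}|\tilde K_j(v)|\,dv\leq C|z|^{-\delta}$ for $|z|\geq 1$. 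Summing over $j\in\Z$ and splitting at $2^j|y|\approx 1$, the first bound controls $2^j|y|\leq 1$ by a convergent geometric series $\sum 2^j|y|\lesssim 1$, and the second controls $2^j|y|>1$ by $\sum(2^j|y|)^{-\delta}\lesssim 1$; both estimates are uniform in $y$. This establishes the kernel estimate, and hence the theorem. The only genuinely delicate point is the second paragraph: extracting the two uniform-in-$j$ bounds on the rescaled kernels $\tilde K_j$, where the precise choice $k=[n/2]+1$ enters exactly once --- to guarantee $(1+|\cdot|)^{-k}\in L^2$ with a strictly positive margin $\delta=k-n/2$ --- and it is this same $\delta>0$ that makes the dyadic sum converge in the last step.
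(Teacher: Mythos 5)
The paper does not prove Theorem~\ref{MH}: it is quoted in the appendix as the classical Mikhlin--H\"ormander multiplier theorem, so there is no in-paper argument to compare against. Your proof is the standard Calder\'on--Zygmund one (dyadic decomposition of the symbol, uniform weighted $L^2$ bounds on the rescaled kernel pieces via Plancherel using $2k>n$, the H\"ormander kernel condition summed over scales, weak $(1,1)$ plus interpolation and duality), and it is correct; in particular the two complementary bounds $g_j(z)\lesssim\min\{|z|,|z|^{-\delta}\}$ with $\delta=k-n/2>0$ do close the dyadic sum uniformly in $y$, exactly as you indicate.
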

By Young inequality, $\hat K\in M_p^q$ if~$K\in L^r$, with~$1-1/r=1/p-1/q$.

Mikhlin-H\"ormander multiplier theorem is often used together with Hardy-Littlewood-Sobolev theorem for the Riesz potential~$I_a$.
\begin{theorem}\label{thm:HLS}
Let~$a\in(0,n)$ and~$p\in(1,n/a)$. Then~$\xii^{-a} \in M_p^q(\R^n)$, that is, $I_a\in L_p^q(\R^n)$, with~$q$ obtained by
\[ \frac1q = \frac1p-\frac{a}n. \]
\end{theorem}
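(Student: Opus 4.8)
The plan is to prove this classical Hardy--Littlewood--Sobolev estimate via Hedberg's pointwise inequality together with the boundedness of the Hardy--Littlewood maximal operator. First I would recall that for $a\in(0,n)$ the Fourier multiplier $\xii^{-a}$ is, up to a positive constant $c_{n,a}$, convolution with the Riesz kernel $|x|^{a-n}$, so that it suffices to bound $Jf(x):=\int_{\R^n}|x-y|^{a-n}\,|f(y)|\,dy$ in $L^q$ in terms of $\|f\|_{L^p}$. To this end I would fix $x$ and a radius $R>0$ to be chosen later, and split $Jf(x)$ into the contributions of the region $\{|x-y|<R\}$ and of its complement.

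For the near region I would use a dyadic decomposition into annuli $\{2^{-j-1}R\le |x-y|<2^{-j}R\}$, $j\ge 0$; on each annulus the kernel is bounded by $(2^{-j-1}R)^{a-n}$, and integrating $|f|$ over the ball of radius $2^{-j}R$ produces a factor comparable to $(2^{-j}R)^{n}\,\mathcal{M}f(x)$, where $\mathcal{M}f$ denotes the Hardy--Littlewood maximal function. Summing the resulting geometric series in $j$ (which converges because $a>0$) gives the bound $C\,R^{a}\,\mathcal{M}f(x)$ for the near part. For the far region I would apply Hölder's inequality with exponents $p$ and $p'$: the factor involving $f$ is $\|f\|_{L^p}$, while $\big(\int_{|z|\ge R}|z|^{(a-n)p'}\,dz\big)^{1/p'}$ is finite precisely when $(n-a)p'>n$, i.e.\ when $p<n/a$, and equals a constant times $R^{\,a-n/p}$. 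Hence $Jf(x)\le C\big(R^{a}\,\mathcal{M}f(x)+R^{\,a-n/p}\,\|f\|_{L^p}\big)$, and $|I_a f(x)|\le c_{n,a}\,Jf(x)$.

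Next I would optimize in $R$, choosing it so that the two terms balance, namely $R^{n/p}\approx \|f\|_{L^p}/\mathcal{M}f(x)$; substituting yields Hedberg's inequality $|I_af(x)|\le C\,(\mathcal{M}f(x))^{1-ap/n}\,\|f\|_{L^p}^{ap/n}$. Finally I would raise this to the power $q$ and integrate over $\R^n$: since $1/q=1/p-a/n$ forces $q(1-ap/n)=p$ and $q\cdot ap/n=q-p$, the right-hand side becomes $C\,\|f\|_{L^p}^{q-p}\int_{\R^n}(\mathcal{M}f(x))^{p}\,dx=C\,\|f\|_{L^p}^{q-p}\,\|\mathcal{M}f\|_{L^p}^{p}$, and the Hardy--Littlewood maximal theorem (applicable because $p>1$) bounds $\|\mathcal{M}f\|_{L^p}$ by $C\|f\|_{L^p}$, giving $\|I_af\|_{L^q}\le C\|f\|_{L^p}$ after taking $q$-th roots.

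There is no genuine obstacle here: this is the standard Hardy--Littlewood--Sobolev theorem and could simply be cited. The only non-elementary input is the $L^p$-boundedness of the maximal operator for $p>1$, which is classical; the restriction $p>1$ is exactly what is needed (for $p=1$ one only obtains the weak-type endpoint estimate), consistently with the hypothesis of the statement. An alternative route, yielding the same conclusion, would be to prove the weak-type bound $\|I_af\|_{L^{q,\infty}}\lesssim\|f\|_{L^p}$ for $1\le p<n/a$ by the same splitting argument and then upgrade to the strong-type estimate for $p>1$ by the Marcinkiewicz interpolation theorem.
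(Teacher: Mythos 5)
Your proof is correct. The paper itself offers no proof of Theorem~\ref{thm:HLS}: it is recorded in the appendix as a classical result to be quoted, so there is nothing to compare against line by line. What you give is the standard Hedberg argument, and every step checks out: the dyadic summation over the annuli converges because $a>0$; the far-field Hölder estimate requires $(n-a)p'>n$, which is exactly $p<n/a$; the balancing choice $R^{n/p}\approx \|f\|_{L^p}/\mathcal{M}f(x)$ yields Hedberg's pointwise inequality with exponent $1-ap/n\in(0,1)$; and the exponent arithmetic $q(1-ap/n)=p$, $q\,ap/n=q-p$ under $1/q=1/p-a/n$ is right, so the maximal theorem (valid since $p>1$) closes the argument. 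Two small points you would want to make explicit in a polished write-up: first, the identification of the multiplier $\xii^{-a}$ with convolution against $c_{n,a}|x|^{a-n}$ (so that the operator $I_a$ really is the Riesz potential, which is how the paper uses it) deserves a sentence, e.g.\ by homogeneity and radial symmetry of the Fourier transform of homogeneous distributions; second, the optimization in $R$ should be accompanied by the remark that the inequality is trivial at points where $\mathcal{M}f(x)=0$ (then $f=0$ a.e.) and that $\mathcal{M}f(x)<\infty$ a.e.\ for $f\in L^p$. Your alternative route via the weak-type $(p,q)$ bound plus Marcinkiewicz interpolation is equally standard and also fine; either way, citing the literature, as the paper does, would have sufficed.
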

A function~$m$ is a multiplier in~$M_1$ if~$\mathfrak{F}^{-1}m\in L^1$. In particular, this is true if~$m\in H^N$, for some~$N>n/2$. The following result, which also provides an estimate for~$\|\mathfrak{F}^{-1}m\|_{L^1}$, is of great interest for us.
\begin{theorem}\label{Th: Bernstein}
Let $n\geq 1$ and $N>n/2$. Assume that $m\in H^N$, then $\mathscr{F}^{-1}m\in L^1$ and there exists a constant $C>0$ such that
\[ \norma{\mathscr{F}^{-1}m}_{L^1}\leq C\, \norma{m}_{L^2}^{1-\frac{n}{2N}}\norma{D^Nm}_{L^2}^{\frac{n}{2N}}.\]
\end{theorem}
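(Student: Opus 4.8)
The plan is to write $f=\mathscr{F}^{-1}m$, which is a well-defined element of $L^2$ since $m\in H^N\subset L^2$, and to estimate $\|f\|_{L^1}$ by splitting $\R^n$ into a ball $\{|x|\le R\}$ and its complement, with the radius $R>0$ to be chosen at the end. On the ball, Cauchy--Schwarz together with Plancherel's theorem gives
\[ \int_{|x|\le R}|f(x)|\,dx \le |B_R|^{\frac12}\,\|f\|_{L^2} = C\,R^{n/2}\,\|m\|_{L^2}. \]
On the complement I would insert the factor $|x|^{-N}|x|^{N}$ and apply Cauchy--Schwarz again,
\[ \int_{|x|>R}|f(x)|\,dx \le \Big(\int_{|x|>R}|x|^{-2N}\,dx\Big)^{\frac12}\Big(\int_{\R^n}|x|^{2N}|f(x)|^2\,dx\Big)^{\frac12}. \]
The first factor equals $C\,R^{(n-2N)/2}=C\,R^{n/2-N}$, the integral converging precisely because $N>n/2$ (this is where the hypothesis is used). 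For the second factor, expand $|x|^{2N}=(x_1^2+\dots+x_n^2)^N=\sum_{|\gamma|=N}\binom{N}{\gamma}x^{2\gamma}$ by the multinomial theorem, and use $\widehat{x^\gamma f}=c_\gamma\,\partial_\xi^\gamma\hat f=c_\gamma\,\partial_\xi^\gamma m$, so that Plancherel yields $\|x^\gamma f\|_{L^2}=c\,\|\partial_\xi^\gamma m\|_{L^2}$; summing over $|\gamma|=N$ gives $\big\||x|^N f\big\|_{L^2}\le C\,\|D^N m\|_{L^2}$, hence $\int_{|x|>R}|f|\,dx\le C\,R^{n/2-N}\,\|D^N m\|_{L^2}$.

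Combining the two pieces yields, for every $R>0$,
\[ \|f\|_{L^1}\le C\big(R^{n/2}\|m\|_{L^2}+R^{n/2-N}\|D^N m\|_{L^2}\big). \]
Since the first term is increasing and the second decreasing in $R$, I would optimize by choosing $R$ with $R^{N}\approx\|D^N m\|_{L^2}/\|m\|_{L^2}$, at which value both terms become comparable to $\|m\|_{L^2}^{1-\frac{n}{2N}}\,\|D^N m\|_{L^2}^{\frac{n}{2N}}$. This simultaneously gives the membership $\mathscr{F}^{-1}m\in L^1$ and the asserted bound.

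The computation is entirely standard; there is no real obstacle, but a couple of points deserve a line of care. First, the identity $\widehat{x^\gamma f}=c_\gamma\,\partial^\gamma m$ should be justified as an $L^2$ identity for merely $m\in H^N$, which one does by approximating $m$ in $H^N$ by Schwartz functions and passing to the limit (the final inequality is then stable under this limit). Second, one must fix the meaning of $D^N m$ as the collection of all derivatives $\partial^\gamma m$ with $|\gamma|=N$, normalized so that $\|D^N m\|_{L^2}^2=\sum_{|\gamma|=N}\|\partial^\gamma m\|_{L^2}^2$, so that the multinomial expansion of $|x|^{2N}$ matches it up to dimensional constants. The only mildly delicate quantitative input is the convergence of $\int_{|x|>R}|x|^{-2N}\,dx$, which forces the restriction $N>n/2$ and is what makes the argument fail at $N=n/2$.
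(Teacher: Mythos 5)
Your argument is correct and is essentially the proof in the paper: both split $\R^n$ at a radius $R$, apply Cauchy--Schwarz and Plancherel on each piece (using $N>n/2$ for the convergence of $\int_{|x|>R}|x|^{-2N}dx$), and choose $R^N\approx\|D^Nm\|_{L^2}/\|m\|_{L^2}$ to balance the two terms. Your extra remarks on identifying $\||\cdot|^Nf\|_{L^2}$ with $\|D^Nm\|_{L^2}$ via the multinomial expansion and a density argument are fine but not a departure from the paper's route.
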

\begin{proof}
Let $f=\mathscr{F}^{-1}m$. If $f=0$ then the statement is trivial. Otherwise, let
\[ r=\norma{m}_{L^2}^{-\frac{1}{N}}\norma{D^Nm}_{L^2}^{\frac{1}{N}}.\]
Then, using H\"older's inequality, we get
\begin{align*}
\norma{f}_{L^1}
    & =\int_{|x|\leq r} |f(x)|\, dx+ \int_{|x|\geq r} |x|^{-N}|x|^N|f(x)|\, dx \\
    & \lesssim r^{\frac{n}2}\norma{f}_{L^2}+ r^{-N+\frac{n}{2}}\norma{|\cdot |^N f}_{L^2} \approx r^{\frac{n}2}\norma{m}_{L^2}+ r^{-N+\frac{n}2}\norma{D^N m}_{L^2},
\end{align*}
and the proof follows.
\end{proof}
%

The estimates provided by Theorem~\ref{Th: Bernstein} will be used together with the estimates for~$\|\mathfrak{F}^{-1}m\|_{L^\infty}$, provided by the following application of Littman's lemma, based on stationary phase methods (see, for instance, \cite{Pecher}).
\begin{lemma}\label{ThmLittmanlemmapecher}
Let us consider for $\tau \geq \tau_0$, $\tau_0$ is a large positive number, the oscillating integral
\[  F^{-1}_{\eta\rightarrow x}\big(e^{-i\tau \omega(\eta)} v(\eta)\big).\]
The amplitude function $v=v(\eta)$ is supposed to belong to $\mathcal C_c^\infty(\mathbb{R}^n)$ with support in $\{\eta \in \mathbb{R}^n: |\eta| \in [\frac{1}{2},2]\}$. The function $\omega=\omega(\eta)$ is $C^\infty$ in a neighborhood of the support of $v$. Moreover, the Hessian $H_\omega(\eta)$ is nonsingular, i.e., $\det H_\omega(\eta)\neq0$, on the support of $v$. Then the following $L^\infty-L^\infty$ estimate holds:
\[   \big\|F^{-1}_{\eta\rightarrow x}\big(e^{-i\tau \omega(\eta)} v(\eta)\big)\|_{L^\infty(\mathbb{R}^n_x)} \leq C(1+\tau)^{-\frac{n}{2}} \sum_{|\beta| \leq L} \|D^\beta_\eta v(\eta)\|_{L^\infty(\mathbb{R}^n_\eta)}, \]
where $L$ is a suitable entire number.
\end{lemma}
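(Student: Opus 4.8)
The plan is to rescale and reduce to a stationary-phase estimate that is uniform in the "frequency'' variable. Writing $y=x/\tau$ and $\phi(\eta;y)=\omega(\eta)-y\cdot\eta$, one has
\[ F^{-1}_{\eta\to x}\big(e^{-i\tau\omega(\eta)}v(\eta)\big)(x)=(2\pi)^{-n}\int_{\R^n}e^{i\tau\phi(\eta;y)}\,v(\eta)\,d\eta=:(2\pi)^{-n}J(\tau,y), \]
so it suffices to prove $|J(\tau,y)|\lesssim \tau^{-n/2}\sum_{|\beta|\le L}\|D_\eta^\beta v\|_{L^\infty}$ uniformly in $y\in\R^n$ and $\tau\ge\tau_0$, which, since $\tau\ge\tau_0>0$, is equivalent to the asserted bound with $(1+\tau)^{-n/2}$. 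The structural fact I would exploit is that $\nabla_\eta\phi(\eta;y)=\nabla\omega(\eta)-y$ while $H_\eta\phi(\eta;y)=H_\omega(\eta)$ is independent of $y$ and, by hypothesis, nonsingular on the compact annulus $\supp v$; hence it is uniformly nonsingular there, say $|\det H_\omega(\eta)|\ge c_0>0$ and $\|H_\omega(\eta)^{-1}\|\le C_0$ on a neighbourhood of $\supp v$. In particular $\nabla\omega$ is a local diffeomorphism on $\supp v$ with uniform bounds, and the critical points of $\eta\mapsto\phi(\eta;y)$ can neither coalesce nor degenerate as $y$ varies — this is precisely what will force the constants to be uniform in $y$.

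First I would dispose of the non-stationary régime. Let $\mathcal R=\nabla\omega(\supp v)$, a compact set, fix $\varepsilon>0$ and let $\mathcal R'$ be the $\varepsilon$-neighbourhood of $\mathcal R$. For $y\notin\mathcal R'$ one has $|\nabla_\eta\phi(\eta;y)|=|\nabla\omega(\eta)-y|\ge\varepsilon$ on $\supp v$ (and $\gtrsim 1+|y|$ for $|y|$ large, since $\nabla\omega$ is bounded on $\supp v$). Repeated integration by parts with the transport operator $\mathcal L=(i\tau)^{-1}|\nabla_\eta\phi|^{-2}\,\nabla_\eta\phi\cdot\nabla_\eta$, which satisfies $\mathcal L\,e^{i\tau\phi}=e^{i\tau\phi}$, then gives, for every $N\in\N$, a bound $|J(\tau,y)|\le C_N\,\tau^{-N}\sum_{|\beta|\le N}\|D_\eta^\beta v\|_{L^\infty}$ with $C_N$ depending on $N$, $\varepsilon$ and finitely many $C^\infty$-norms of $\omega$ over $\supp v$; choosing $N\ge n/2$ and using $\tau\ge\tau_0$ absorbs this into the target estimate.

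Second, for $y$ in the closure of $\mathcal R'$ I would use a finite partition of unity on $\supp v$ subordinate to charts on which $\nabla\omega$ is a diffeomorphism onto its image, so that on each piece $\eta\mapsto\phi(\eta;y)$ has at most one critical point (the $\nabla\omega$-preimage of $y$, when it lies in that piece), necessarily nondegenerate with $|\det H_\omega|\ge c_0$. On a piece that contains a critical point I would invoke the classical stationary-phase lemma for a nondegenerate critical point (see, e.g., \cite{Pecher}): it yields exactly a factor $\tau^{-n/2}$ times $\sum_{|\beta|\le L}\|D_\eta^\beta v\|_{L^\infty}$, with $L\in\N$ and the constant depending only on $n$, on $c_0$, and on finitely many $C^\infty$-norms of $\omega$ over $\supp v$ — hence uniformly in $y$ and in $\tau\ge\tau_0$; on a piece with no critical point I would fall back on the non-stationary estimate of the previous step, gaining $\tau^{-N}$ instead. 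Summing the finitely many pieces, and combining with the non-stationary régime, delivers the uniform bound $|J(\tau,y)|\lesssim\tau^{-n/2}\sum_{|\beta|\le L}\|D_\eta^\beta v\|_{L^\infty}$, and therefore the lemma. I expect the only genuinely delicate point to be keeping the stationary-phase constant uniform in $y$; but this is exactly what the $y$-independent, uniformly nonsingular Hessian buys us, since it pins the critical points to the level sets of the fixed map $\nabla\omega$ and bounds $|\det H_\omega|$ from below throughout $\supp v$.
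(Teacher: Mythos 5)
The paper does not actually prove this lemma: it records it as a known consequence of stationary phase and points the reader to Proposition 2.5 of \cite{SS}, adding only the observation that the constants stay uniform under a parameter $t$ as long as $|\det H_\omega|$ is bounded below uniformly. Your argument is the standard proof behind that citation --- rescale $y=x/\tau$, kill the non-stationary region $|\nabla\omega(\eta)-y|\gtrsim 1$ by repeated integration by parts, and handle the compact stationary region by a finite partition into charts where $\nabla\omega$ is a diffeomorphism, applying nondegenerate stationary phase there --- and it is correct. The one point worth making explicit is the uniformity of the dichotomy on each chart piece: if $y\notin\nabla\omega(U_j)$ one still needs $|\nabla\omega(\eta)-y|\ge\delta_j>0$ on $\supp\chi_j$, which follows because $\nabla\omega(\supp\chi_j)$ is a compact subset of the open set $\nabla\omega(U_j)$; with that remark the constants are uniform in $y$ exactly as you claim, and this is also what justifies the paper's parameter-dependent version \eqref{eq:Hessian}.
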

In  Proposition 2.5 of \cite{SS} one can find a simple proof of Lemma \ref{ThmLittmanlemmapecher}, from which it is easy to check that the statement remains valid whenever~$\omega$ and~$v$ depend on some parameter~$t$, provided that~$|\det H_\omega(t,\eta)|\geq c>0$, with~$c$ uniform with respect to~$t$. This property in our paper appears in~\eqref{eq:Hessian}.

Another strategy to derive multiplier estimates, showing that~$\mathfrak{F}^{-1} m\in L^r$, for some~$r\in[1,\infty]$ and then applying Young inequality, is based on the use of integration by parts in the formula for the inverse Fourier transform~$m$ to derive pointwise estimates for~$\mathfrak{F}^{-1} m$. This strategy is particularly effective if~$m$ is compactly supported, as in the following Lemma~\ref{lem:parts}, or if~$m$ vanishes in a neighborhood of the origin as in Lemma~\ref{lem:highL1}.
\begin{lemma}\label{lem:parts}
Assume that~$f\in\mathcal C_c^\kappa(\R^n)$ for some~$\kappa\geq0$, integer, and that it verifies the estimates
\[ \forall |\alpha|\leq \kappa: \qquad |\partial_x^\alpha f(\xi)| \leq C\,\xii^{-a},\]
for some~$a<n$. Then~$g=\mathfrak{F}^{-1}f$ satisfies the estimate~$|g(x)|\leq C'\,(1+|x|)^{-\kappa}$.

Moreover, if~$f\in\mathcal C_c^{\kappa+1}$ and
\[ \forall |\alpha|= \kappa+1: \qquad |\partial_x^\alpha f(\xi)| \leq C\,\xii^{-a_1},\]
for some~$a_1\in [n,n+1)$, then
\[ |g(x)| \leq \begin{cases}
C'\,(1+|x|)^{-\kappa-(n-a)} & \text{if~$a>a_1-1$,} \\
C'\,(1+|x|)^{-\kappa-(n+1-a_1)} & \text{if~$a\leq a_1-1$ and~$a_1\in(n,n+1)$,} \\
C'\,(1+|x|)^{-\kappa-1}\,\log (e+|x|) & \text{if~$a\leq n-1$ and~$a_1=n$.}
\end{cases}
\]
\end{lemma}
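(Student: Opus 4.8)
The plan is to estimate $g(x)=\mathfrak{F}^{-1}f(x)=\k\int_{\R^n}e^{ix\xi}f(\xi)\,d\xi$ by combining the trivial bound (valid since $a<n$ guarantees $f\in L^1$ on its compact support) with bounds obtained by repeated integration by parts, and then to choose the better of the two depending on the size of $|x|$. For $|x|\leq 1$ the trivial bound $|g(x)|\leq\k\|f\|_{L^1}\leq C'$ already gives everything, so throughout one assumes $|x|\geq1$ and seeks decay in $|x|$. The first key observation is that each integration by parts in the direction $\xi_j$ produces a factor $(ix_j)^{-1}$ outside and transfers one derivative onto $f$; iterating this $\kappa$ times in a well-chosen coordinate (say the one realizing $|x_j|\gtrsim|x|$) yields
\[
|g(x)|\lesssim |x|^{-\kappa}\int_{\supp f}|\partial_\xi^\alpha f(\xi)|\,d\xi \lesssim |x|^{-\kappa}\int_{\xii\leq\eps}\xii^{-a}\,d\xi\lesssim |x|^{-\kappa},
\]
the integral being finite precisely because $a<n$. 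Since there is no boundary term ($f\in\mathcal C_c^\kappa$), this is clean, and combined with the $|x|\leq1$ case it gives the first assertion $|g(x)|\lesssim(1+|x|)^{-\kappa}$.

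For the refined estimate the idea is to perform one more integration by parts (allowed since now $f\in\mathcal C_c^{\kappa+1}$), but the derivative $\partial_\xi^\alpha f$ with $|\alpha|=\kappa+1$ only satisfies $|\partial_\xi^\alpha f|\lesssim\xii^{-a_1}$ with $a_1$ possibly $\geq n$, so the integral $\int\xii^{-a_1}\,d\xi$ may diverge near the origin. The standard remedy is to split the integral at the scale $\xii\approx|x|^{-1}$: on the inner region $\xii\leq|x|^{-1}$ one integrates by parts only $\kappa$ times (picking up $|x|^{-\kappa}$ and the integral $\int_{\xii\leq|x|^{-1}}\xii^{-a}\,d\xi\approx|x|^{-(n-a)}$), while on the outer region $\xii\geq|x|^{-1}$ one integrates by parts $\kappa+1$ times (picking up $|x|^{-\kappa-1}$ and $\int_{|x|^{-1}\leq\xii\leq\eps}\xii^{-a_1}\,d\xi$). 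One must also keep track of the boundary term on the sphere $\xii=|x|^{-1}$ arising from the last integration by parts; this contributes $|x|^{-\kappa}\cdot|x|^{-1}\cdot(|x|^{-1})^{-a}\cdot(|x|^{-1})^{n-1}=|x|^{-\kappa-(n-a)}$, which is of the same order as the inner contribution. Evaluating the outer integral gives $|x|^{-(n+1-a_1)}$ when $a_1\in(n,n+1)$, a logarithm $\log(e+|x|)$ when $a_1=n$ (after a further split to handle the divergent $\int\xii^{-n}\,d\xi$), and $O(1)$ when $a_1<n$; comparing the exponents $-\kappa-(n-a)$ from the inner/boundary piece with $-\kappa-1-(n-a_1)$ (equivalently $-\kappa-(n-a)$ versus $-\kappa-(n+1-a_1)$) yields exactly the three cases in the statement, the dividing line being $a\gtrless a_1-1$.

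The main obstacle is purely bookkeeping: one has to organize the multi-index Leibniz expansions so that every derivative of $f$ of order $\leq\kappa$ is controlled by $\xii^{-a}$ and every derivative of order $\kappa+1$ by $\xii^{-a_1}$, then uniformly estimate the resulting collection of inner-region, outer-region, and spherical-boundary integrals, and finally take the minimum over coordinate directions $j$ to replace $|x_j|$ by $|x|$ throughout. No single step is deep, but the logarithmic borderline case $a_1=n$ requires care, since there one cannot simply bound $\int_{|x|^{-1}\leq\xii\leq\eps}\xii^{-n}\,d\xi$ by a constant; instead one obtains $\log(\eps|x|)\lesssim\log(e+|x|)$, and one checks that the competing exponent $-\kappa-(n-a)$ with $a\leq n-1$ is at most $-\kappa-1$, so the logarithmic term indeed dominates. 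Assembling the pieces and absorbing the $|x|\leq1$ case into the $(1+|x|)$ notation completes the proof.
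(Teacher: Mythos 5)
Your proposal is correct and follows essentially the same route as the paper: $\kappa$-fold integration by parts (the paper uses the symmetric identity $e^{ix\xi}=-\sum_j\frac{ix_j}{|x|^2}\partial_{\xi_j}e^{ix\xi}$ rather than a single well-chosen coordinate, but this is cosmetic), followed by a split at $\xii\approx|x|^{-1}$ with one additional integration by parts on the outer region, a spherical boundary term of size $|x|^{-\kappa-(n-a)}$, and a comparison of exponents that produces the three cases including the logarithm at $a_1=n$. Nothing further is needed.
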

\begin{proof}
Due to~$a<n$, by the compact support of~$f$, we obtain~$\partial_\xi^\alpha f\in L^1$ for~$|\alpha|\leq\kappa$, so that~$(1+|x|)^k\,g\in\mathcal C_0$. This proves the first part of the statement.

Thanks to
\begin{equation}\label{eq:parts}
e^{ix\xi} = -\sum_{j=1}^n \frac{ix_j}{|x|^2}\,\partial_{\xi_j} e^{ix\xi},
\end{equation}
after integrating by parts~$\kappa$ times, we may write
\[ g(x) = (2\pi)^{-n}\,\int_{\R^n} e^{ix\xi} f(\xi)\,d\xi = (2\pi)^{-n} |x|^{-\kappa} \sum_{|\gamma|=\kappa} c_\gamma \,\int_{\R^n} e^{ix\xi} \partial_\xi^\gamma f(\xi)\,d\xi,\]
where we used that~$f$ is compactly supported. We now split the integral in two parts and we apply one extra step of integration by parts in the latter integral:
\begin{align*}
\int_{\R^n} e^{ix\xi} \partial_\xi^\gamma f(\xi)\,d\xi
    & = \int_{\xii\leq |x|^{-1}} e^{ix\xi} \partial_\xi^\gamma f(\xi)\,d\xi - \sum_{j=1}^n \frac{ix_j}{|x|^2}\, \int_{\xii=|x|^{-1}} e^{ix\xi} \partial_\xi^\gamma f(\xi)\,d\xi \\
    & \qquad  + \sum_{j=1}^n \frac{ix_j}{|x|^2}\, \int_{\xii\geq|x|^{-1}} e^{ix\xi} \partial{\xi_j} \partial_\xi^\gamma f(\xi)\,d\xi\,.
\end{align*}
Let~$M>|x|^{-1}$ be such that~$\supp f\subset \{\xii<M\}$. Then we may estimate
\begin{align*}
&\int_{\xii\leq |x|^{-1}} |\partial_\xi^\gamma f(\xi)|\,d\xi \leq C\,\int_{\xii\leq |x|^{-1}} \xii^{-a}\,d\xi = C_1\,|x|^{-(n-a)}, \\
& |x|^{-1}\,\int_{\xii=|x|^{-1}} |\partial_\xi^\gamma f(\xi)|\,d\xi = |x|^{-1}\,\int_{\xii=|x|^{-1}} \xii^{-a} \,d\xi = C_2\,|x|^{-(n-a)},\\
& |x|^{-1}\, \int_{\xii\geq |x|^{-1}} |\partial{\xi_j} \partial_\xi^\gamma f(\xi)|\,d\xi = |x|^{-1}\, \int_{|x|^{-1}\leq\xii\leq M} \xii^{-a_1}\,d\xi =\begin{cases}
C_3\,|x|^{-(n+1-a_1)} & \text{if~$a_1>n$,}\\
C_3\,|x|^{-1}\log (M|x|) & \text{if~$a_1=n$.}
\end{cases}
\end{align*}
As a consequence,
\[ |g(x)| \leq \begin{cases}
C'\,|x|^{-\kappa-(n-a)} & \text{if~$a<a_1-1$,} \\
C'\,|x|^{-\kappa-(n+1-a_1)} & \text{if~$a\geq a_1-1$ and~$a_1\in(n,n+1)$,} \\
C'\,|x|^{-\kappa-1}\,\log (e+|x|) & \text{if~$a\geq n-1$ and~$a_1=n$.}
\end{cases}
\]
This concludes the proof.
\end{proof}
\begin{remark}\label{rem:parts}
In particular, if
\[ \forall \alpha: \qquad |\partial_x^\alpha f(\xi)| \leq C_\alpha\,(1+\xii^{d-|\alpha|}),\]
for some~$d>-n$, then we may apply Lemma~\ref{lem:parts} with~$\kappa = n-1+\ceil{d}$, i.e., $\kappa$ is the biggest integer verifying~$\kappa<n+d$. Setting~$a=\kappa-d$ (we notice that~$a\in[n-1,n)$) and~$a_1=a+1$, we get
\[ |g(x)| \leq \begin{cases}
C'\,(1+|x|)^{-n-d} & \text{if~$d$ is not integer,} \\
C'\,(1+|x|)^{-n-d}\,\log (e+|x|) & \text{if~$d$ is integer.}
\end{cases}
\]
As a consequence, $g\in L^1\cap L^\infty$ if~$d>0$, and~$g\in L^r$ for any~$r>n/(n+d)$, if~$d\leq0$.
\end{remark}

\section*{Acknowledgments}

This paper has been realized during the stay of the second author to the Department of Mathematics of University of Bari in the period September-December 2019, supported by the ``Visiting professor program'' of University of Bari. The first author is member of the Gruppo Nazionale per l'Analisi Matematica, la Probabilit\`a e le loro Applicazioni (GNAMPA) of the Istituto Nazionale di Alta Matematica (INdAM).


\end{document}